\newtheorem{theo}{Theorem}[section]
\newtheorem{prop}[theo]{Proposition}
\newtheorem{conj}[theo]{Conjecture}
\numberwithin{equation}{section}
\newcommand\la{\lambda}
\newcommand\al{\alpha}
\title{Jack polynomials and free cumulants}
\author{Michel Lassalle\\
\small Centre National de la Recherche Scientifique\\[-0.8ex]
\small Institut Gaspard-Monge, Universit\'e de Marne-la-Vall\'ee\\[-0.8ex]
\small 77454 Marne-la-Vall\'ee Cedex, France\\[-0.8ex]
\small \texttt{lassalle@univ-mlv.fr}\\[-0.8ex]
\small \texttt{http://igm.univ-mlv.fr/{\textasciitilde}lassalle}}
\date{}
\begin{document}
\maketitle
\begin{abstract}
We study the coefficients in the expansion of Jack polynomials in terms of power sums. We express them as polynomials in the free cumulants of the transition measure of an anisotropic Young diagram. We conjecture that such polynomials have nonnegative integer coefficients. This extends recent results about normalized characters of the symmetric group.

{{\em 2000 Mathematics Subject Classification:}
05E05, 05E35, 33C52, 46L54.}
\end{abstract}

\section{Introduction}

The following connection between probability theory and representations of symmetric groups is due to Kerov~\cite{K1} and Biane~\cite{B1}. 

Let $S_n$ be the symmetric group of $n$ letters. Its irreducible representations are labelled by partitions of $n$, i.e. weakly decreasing sequences $\la= (\la_1,...,\la_l)$ of positive integers summing to $n$. We denote $l(\la)=l$ and $|\la|=n$, the respective length and weight of $\la$.

Given a partition $\la$, Kerov~\cite{K1} defined the ``transition measure'' of its Young diagram. The ``free cumulants'' of this measure are real numbers $\{R_i(\la), i\ge 2\}$ defined in the framework of free probability~\cite{V,Sp}. They were introduced by Biane~\cite{B1} to solve asymptotic problems in the representation theory of $S_n$.

Later Kerov~\cite{K2} and Biane~\cite{B2} observed that the character of the irreducible representation labelled by $\la$ may be written as a polynomial in the free cumulants $R_i(\la)$.

More precisely let $\textrm{dim}\,\la$ be the dimension of this representation and ${\chi}^\la_\rho=\chi^\la(\sigma)$ the value of its character at any permutation $\sigma\in S_n$ with cycle-type $\rho$.
Let $r\le n$ be a positive integer and  $\rho=(r,1^{n-r})$ the corresponding $r$-cycle in $S_n$. Kerov~\cite{K2} and Biane~\cite{B2} proved that the normalized character
\[\hat{\chi}^\la_r=n(n-1) \cdots (n-r+1) \frac{\chi^\la_{r,1^{n-r}}}{\textrm{dim}\,\la}\]
is a polynomial in the free cumulants $R_i(\la)$, with integer coefficients. They conjectured these coefficients to be nonnegative.

This property was recently proved by F\'eray~\cite{F}, who also extended it to
\[\hat{\chi}^\la_\mu=n(n-1) \cdots (n-r+1) \frac{\chi^\la_{\mu,1^{n-r}}}{\textrm{dim}\,\la},\]
with $\mu$ any partition of weight $r$ (see also~\cite{F2}). Previously, a method to compute $\hat{\chi}^\la_r$ had been given in~\cite{GR,R}, together with several explicit cases (see also~\cite{B3}). Stronger conjectures (yet unproved) were formulated in~\cite{La2}.

The purpose of this paper is to present an extension of these results in the framework of Jack polynomials. 

The family of Jack polynomials $J_{\la}(\alpha)$ is indexed by partitions. It forms a basis of the algebra of symmetric functions with rational coefficients in some positive real parameter $\alpha$. We consider the transition matrix between this basis and the classical basis of power sums $p_{\rho}$, i.e.
\[J_{\la}(\alpha)=\sum_{\rho} \theta^{\la}_{\rho}(\alpha) \,p_{\rho}.\]
Let $\mu$ be a partition with $r=|\mu|\le |\la|= n$. Using multiplicities, we write $\mu=(1^{m_1(\mu)},2^{m_2(\mu)},\ldots)$ and $z_\mu  = \prod_{i} i^{m_i(\mu)} m_i(\mu)!$. 

Firstly, we observe that the quantity 
\[\vartheta^{\la}_{\mu}(\alpha)=z_{\mu}\, \theta^{\la}_{\mu,1^{n-r}}(\alpha)\]
is a natural generalization of the normalized character $\hat{\chi}^\la_\mu$.
This is a consequence of the Frobenius formula for the Schur functions $s_\la$. Actually we have
\[s_{\la}=\sum_{\rho} z_\rho^{-1}\chi^\la_\rho \, p_\rho\quad \textrm{and}\quad J_{\la}(1)=h_\la s_\la,\]
where $h_\la=n!/\textrm{dim}\,\la$ is the hook-length of $\la$ (see~\cite[Examples 1.1.1 and 1.7.6]{Ma} and~\cite[p. 78]{S}). This yields
\begin{equation*}
z_{\rho}\, \theta^{\la}_{\rho}(1)=n!\,
\frac{\chi^\la_{\rho}}{\textrm{dim}\,\la},
\end{equation*}
and for $\rho=(\mu,1^{n-r})$,
\[ \hat{\chi}^\la_\mu=
\binom{n-r+m_1(\mu)}{m_1(\mu)}\,\vartheta^{\la}_{\mu}(1).\]
If $\mu$ is chosen with no part $1$, which is always possible, we thus have $\hat{\chi}^\la_\mu=\vartheta^{\la}_{\mu}(1)$. In particular $\hat{\chi}^\la_r=\vartheta^{\la}_{r}(1)$.

Secondly, expanding Jack polynomials in terms of the ``augmented'' monomial symmetric functions, which are integral combinations of the power sums~\cite[p. 110]{Ma}, and using~\cite[Theorem 1.1]{KS}, it is known that $\vartheta^{\la}_{\mu}(\alpha)$ is a polynomial in $\al$ with integer coefficients.

In this paper we consider the notion of ``anisotropic'' Young diagram of a partition $\la$, introduced by Kerov~\cite{K3}. This diagram is obtained from the classical Young diagram of $\la$ by the dilation $(u,v) \rightarrow (u,v/\al)$. 

We define the transition measure of this anisotropic Young diagram and the free cumulants $\{R_i(\la;\al), i\ge 1\}$ of this measure. We show that $-R_k(\la;\al)$ is a polynomial in $-1/\al$ with nonnegative integer coefficients. For instance, one has \[R_1(\la;\al)=0,\qquad
R_2(\la;\al)=|\la|/\al,\qquad
R_3(\la;\al)=\sum_i \la_i^2/\al-\sum_i (2i-1)\la_i/\al^2.\]

Given a partition $\mu$ having no part $1$, we present a method to express $\vartheta^{\la}_{\mu}(\alpha)$ in terms of the free cumulants $R_i(\la;\al)$. We show that there exists a polynomial $K_\mu$ such that for any partition $\la$ with $|\la|\ge |\mu|$, one has
\[\vartheta^{\la}_{\mu}(\al)=K_\mu\big(R_2(\la;\al),R_3(\la;\al),\ldots,R_{|\mu|-l(\mu)+2}(\la;\al)\big).\]

The coefficients of $K_\mu$ in the free cumulants are only known to be in $\mathbf{Q}(\alpha)$, the field of rational functions in $\al$. However applying our algorithm, extensive computer calculations support the following conjecture.
\begin{conj}
The coefficients of $K_\mu$ are polynomials in $\al$ with integer coefficients.
\end{conj}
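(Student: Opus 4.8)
The plan is to reduce the conjecture to an integral-basis statement about the free cumulants of anisotropic Young diagrams, and then to describe how one might establish it. \emph{Step 1: reformulation.} By the formulas recalled above, $R_k(\la;\al)$ is a polynomial in $1/\al$ with integer coefficients depending on $\la$; let $d_k\ge 0$ be its degree in $1/\al$ (so $d_2=1$, $d_3=2$) and set $\widetilde R_k=\al^{d_k}R_k(\cdot\,;\al)$, a polynomial function on anisotropic Young diagrams taking values in $\mathbf{Z}[\al]$. Since $\al^{d_k}$ is a unit of $\mathbf{Q}(\al)$, the $\widetilde R_k$ remain algebraically independent as functions of $\la$, so the polynomial expressing $\vartheta_\mu$ in terms of the $R_k$ (which exists by the result quoted in the introduction) can be rewritten uniquely as a polynomial $\widetilde K_\mu$ in the $\widetilde R_k$ with coefficients in $\mathbf{Q}(\al)$. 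Now suppose one could prove the integral-basis property that the ring of $\mathbf{Z}[\al]$-valued polynomial functions on anisotropic Young diagrams coincides with $\mathbf{Z}[\al][\widetilde R_2,\widetilde R_3,\dots]$. Then, since $\vartheta^\la_\mu(\al)\in\mathbf{Z}[\al]$ for every $\la$ (also recalled in the introduction), $\widetilde K_\mu$ would have coefficients in $\mathbf{Z}[\al]$; re-expanding in the $R_k$ only multiplies the coefficient of $R_2^{c_2}R_3^{c_3}\cdots$ by $\al^{\sum_k d_k c_k}$, a nonnegative power of $\al$, so $K_\mu$ itself would have coefficients in $\mathbf{Z}[\al]$. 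The conjecture thus reduces to the integral-basis property; it would in fact be enough to know that each $\vartheta_\mu$, rather than every $\mathbf{Z}[\al]$-valued function, lies in $\mathbf{Z}[\al][\widetilde R_\bullet]$.

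\emph{Step 2: proving the integral-basis property.} One inclusion is trivial since each $\widetilde R_k$ is $\mathbf{Z}[\al]$-valued; the content is the reverse, and (its $\mathbf{Q}(\al)$-analogue being standard) the difficulty is purely the integral structure. I see three routes. (i) Deform Biane's argument for $\al=1$, where the integrality of the Kerov polynomials is the theorem recalled above: one would follow the dilation $(u,v)\mapsto(u,v/\al)$ through the relevant moment and residue computations and check that no genuine denominator in $\al$ is produced. (ii) Find an explicit expansion of $\vartheta^\la_\mu(\al)$ as a sum over bipartite maps, or constellations, weighted by a statistic that measures non-orientability and reduces at $\al=1$ to the orientable-map expansions behind F\'eray's proof; integrality would then be read off the formula. (iii) Construct, via a Pieri- or Jacobi--Trudi-type recursion for Jack polynomials, a supply of $\mathbf{Z}[\al]$-valued polynomial functions that is unitriangular with respect to the $\widetilde R_k$ for the gradation $\deg\widetilde R_k=k$, thereby exhibiting the $\widetilde R_k$ as an integral basis.

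The main obstacle is precisely this passage from a statement over $\mathbf{Q}(\al)$ to one over $\mathbf{Z}[\al]$: unlike at $\al=1$, there is no analogue of the centre of $\mathbf{Z}[S_n]$ to supply the integral structure, and each route meets a hard sub-problem — (ii) presupposes the (in general still conjectural) map-enumeration interpretation of Jack characters, (i) must keep $\al$ out of the denominators of a Lagrange-inversion computation, and (iii) needs a recursion that is at once integral in $\al$ and compatible with the free-cumulant gradation. A constraint any proof must respect, and a useful bookkeeping device, is the duality $R_k(\la;\al)=(-1/\al)^k R_k(\la';1/\al)$ read off from the introduction, together with the analogous $\al\mapsto1/\al$ duality for $\vartheta^\la_\mu(\al)$: these force the coefficients of $K_\mu$ to be palindromic in $\al$, so that once an upper bound on their $\al$-degree is available the absence of negative powers follows automatically. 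The integrality of those coefficients, however, is untouched by the duality and remains the heart of the matter; it is also precisely what one would need to control in order to attack the stronger, still open positivity conjectures of~\cite{La2}.
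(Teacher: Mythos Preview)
The statement you are attempting to prove is labeled in the paper as a \emph{Conjecture} (Conjecture 1.1), not a theorem; the paper does \emph{not} prove it. It is presented as being ``supported by extensive computer calculations'' produced by the algorithm of Sections 7--9, and the text immediately following Proposition~\ref{pr6} repeats that the integrality in $\al$ is only conjectured. There is therefore no proof in the paper to compare your attempt against.

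Your proposal is not a proof either, and you say so yourself: Step~2 lists three possible routes (deforming Biane's $\al=1$ argument, a bipartite-map expansion, a unitriangular Pieri-type construction) and then names the obstacle blocking each one. What you have written is a research outline, not an argument.

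Two remarks on Step~1. First, the reduction is essentially circular. Once you observe that ``it would in fact be enough to know that each $\vartheta_\mu$ lies in $\mathbf{Z}[\al][\widetilde R_\bullet]$'', you have merely restated the conjecture after rescaling the generators: passing from $\widetilde R_k=\al^{d_k}R_k$ back to $R_k$ multiplies each coefficient by a nonnegative power of $\al$, so the two statements are equivalent. The stronger ``integral-basis'' property for \emph{all} $\mathbf{Z}[\al]$-valued shifted symmetric functions is not easier, and you give no evidence that it holds. Second, the duality $R_k(\la;\al)=(-1/\al)^k R_k(\la';1/\al)$ that you say is ``read off from the introduction'' is not stated there; a relation of this type does follow from the standard $\al\leftrightarrow 1/\al$ duality of Jack polynomials, but it is not in the paper, and in any case---as you note---it constrains only the $\al$-degree of the coefficients and contributes nothing toward integrality, which is the entire content of the conjecture.
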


Following F\'eray~\cite{F}, given a family of polynomials $K_\mu$, we can inductively define another family $\tilde{K}_{\mu}$ by
\[K_\mu=\sum_{k=1}^{l(\mu)} (-1)^{l(\mu)-k} \sum_{(\nu_1,\ldots, \nu_k)} \prod_{i=1}^k  \tilde{K}_{\nu_i},\]
where the second sum is taken over all decompositions of the $l(\mu)$ parts of $\mu$ into $k$ disjoint partitions $(\nu_1,\cdots,\nu_k)$. For instance one has $K_r=\tilde{K}_{r}$, $K_{r,s}=K_rK_s-\tilde{K}_{r,s}$ and $K_{r,s,t}=K_rK_sK_t-K_r\tilde{K}_{s,t}-K_s\tilde{K}_{r,t}-K_t\tilde{K}_{r,s}+\tilde{K}_{r,s,t}$.

Our computations support the following positivity conjecture.
\begin{conj}
Denote $\beta= 1-\al$. There is a ``natural'' expression of the coefficients of $\tilde{K}_\mu$ as polynomials in $(\al,\beta)$ with $\mathrm{nonnegative}$ integer coefficients. 
\end{conj}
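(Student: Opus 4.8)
The plan is to build a combinatorial model for the coefficients of $K_\mu$ from which the positivity of $\tilde K_\mu$ follows automatically, with the case $\beta=0$ (that is $\al=1$) reducing to F\'eray's theorem on the connected Kerov polynomials. The starting point is the expansion of $\vartheta^{\la}_{\mu}(\al)=z_\mu\,\theta^{\la}_{\mu,1^{n-r}}(\al)$ coming from the power-sum expansion of $J_\la(\al)$. The integrality statement recalled above (via the augmented monomials and~\cite{KS}) already suggests that these coefficients should be a weighted enumeration of pairs of permutations of prescribed cycle types, equivalently of bipartite maps, each object carrying a monomial $\al^{a}\beta^{b}$ in which the exponent of $\beta$ records a ``measure of non-orientability'' of the map. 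So the first step is to establish such a combinatorial formula for $\theta^{\la}_{\mu}(\al)$ expressed through the content data --- or rather the multirectangular coordinates --- of the anisotropic Young diagram of $\la$; this is the Jack analogue of the Goulden--Jackson picture for the symmetric group.

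The second step is the change of coordinates from diagram data to the free cumulants $R_i(\la;\al)$. Since these are obtained from the moments of the transition measure of the anisotropic diagram by the non-linear inversion defining the $R$-transform, i.e. by the combinatorics of non-crossing partitions, one must check that this inversion, applied to the map-generating function of Step 1, again produces a nonnegative combination of monomials $\al^a\beta^b$. The mechanism I would look for is a matching between the non-crossing-partition gluings computing the $R_i$ and a natural surgery on maps, so that (up to the explicit prefactor already visible in $R_2(\la;\al)=|\la|/\al$ and $R_3(\la;\al)$) the cumulant $R_i(\la;\al)$ becomes a generating series of \emph{connected} objects. Feeding these series into the map formula for $\vartheta^{\la}_{\mu}$ then writes $K_\mu$ as a sum over possibly disconnected collections of such objects, graded by the set partition recording which parts of $\mu$ sit on which connected component.

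The third step is the passage to $\tilde K_\mu$, and it is the easy one once Steps 1--2 are in place. The inversion formula above expressing $K_\mu$ through the products $\prod_i\tilde K_{\nu_i}$ is exactly M\"obius inversion for the set-partition lattice on the $l(\mu)$ parts of $\mu$, so $\tilde K_\mu$ is the connected part of $K_\mu$. If $K_\mu$ has been realised as the full generating function of collections of combinatorial objects graded by the induced set partition, then $\tilde K_\mu$ is precisely the generating function of the \emph{connected} ones: all signs $(-1)^{l(\mu)-k}$ cancel, nonnegativity and integrality in $(\al,\beta)$ are manifest, and the combinatorial sum itself provides the ``natural'' expression demanded by the conjecture.

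The main obstacle is Step 1 together with the compatibility claim in Step 2. A combinatorial formula for the power-sum coefficients of Jack polynomials carrying an honest $\beta$-grading is a deep open problem in its own right --- it is the Jack version of the Matchings--Jack and $b$-conjectures of Goulden and Jackson --- and even granting such a formula, one must show that the $R$-transform inversion transports the grading without cancellation, that is, that free cumulants are exactly the variables in which the enumeration ``factors through connectivity''. A secondary difficulty is that the relation $\al+\beta=1$ makes the representation of a polynomial in $(\al,\beta)$ non-unique, so ``natural'' needs a precise meaning; I would fix it by requiring compatibility with the duality $J_\la(\al)\leftrightarrow J_{\la'}(1/\al)$, which acts on the pair by $(\al,\beta)\mapsto(1/\al,-\beta/\al)$, and then check that the combinatorial sum of Step 3 is the distinguished representative compatible with it. Failing a full model, one could instead try to induct on $|\mu|$ through a Jack analogue of Kerov's differential recursion and of F\'eray's recursion, propagating the bigrading at each step; but keeping track of positivity along that recursion looks no more tractable than the direct combinatorial route.
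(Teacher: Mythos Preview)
The statement you are addressing is a \emph{conjecture} in the paper, not a theorem; the paper gives no proof, only computational evidence and a description of the ``natural'' method (via Proposition~8.2 and the algorithm of Section~9) for producing the $(\al,\beta)$-expressions whose coefficients are then observed to be nonnegative integers. There is therefore nothing in the paper to compare your argument against.

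Your proposal is not a proof but a research programme, and you say so yourself: Step~1 presupposes a combinatorial formula for the power-sum coefficients of Jack polynomials carrying an honest $\beta$-grading, which you correctly identify with the Matchings--Jack and $b$-conjectures of Goulden and Jackson --- open problems of long standing. Step~2 then requires that the passage from moments to free cumulants preserve this hypothetical nonnegative grading, another unproved compatibility. Only Step~3 (the M\"obius inversion reading of the $K_\mu \leftrightarrow \tilde K_\mu$ relation) is a genuine argument, and it is fine, but it takes as input exactly what is missing. So the gap is structural: your approach reduces the conjecture to other, harder, conjectures rather than proving it. If you want to turn this into actual progress, the honest target is either a direct proof of a special case (say $\tilde K_{r,s}$) via the recursion of Section~9, or a proof that the ``natural'' expression produced by Proposition~8.2 is the one singled out by the duality you mention --- neither of which your outline supplies.
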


In particular it is the case for $K_r=\tilde{K}_r$. Such a ``natural'' method to express $\tilde{K}_\mu$ will be presented in the course of this paper.  

Writing $R_i$ instead of $R_i(\la;\al)$ for clarity of display, the values obtained for $K_r, r\le 6$ are as follows:
\begin{equation*}
\begin{split}
K_{2} = & \ \al^2 R_3+\al\beta R_2,\\
K_{3} = & \ \al^3 R_4+3\al^2\beta R_3
+(\al^2+2\al\beta^2)R_2, \\
K_{4} = & \ 
\al^4 R_5+\al^3\beta (6R_4+R_2^2)
+(5\al^3+11\al^2\beta^2)R_3+(7\al^2\beta+6\al\beta^3)R_2,\\
K_{5} = & \
\al^5 R_6+\al^4\beta (10R_5+5 R_3R_2)
+\al^4(15R_4+5R_2^2)+\al^3\beta^2(35R_4+10R_2^2)
 \\ & +(55\al^3\beta+50\al^2\beta^3)R_3
+(8\al^3+46\al^2\beta^2+24\al\beta^4)R_2,\\
K_6 = & \ \al^6 R_7+\al^5\beta (15 R_6+9R_4R_2+6 R_3^2+R_2^3) 
+\al^5(35R_5+35R_3R_2)
\\ & +\al^4\beta^2(85R_5+73R_3R_2) +\al^4\beta(238R_4+96R_2^2)
 +\al^3\beta^3(225R_4+84R_2^2)
\\ & +(84\al^4+505\al^3\beta^2+274\al^2\beta^4)R_3
+(144\al^3\beta+326\al^2\beta^3+120\al\beta^5)R_2.
\end{split}
\end{equation*}
The values of $\tilde{K}_{\mu}, |\mu|\le 6$ are as follows:
\begin{equation*}
\begin{split}
\tilde{K}_{2,2} = & \  
\al^3 (4R_4+2R_2^2)+10\al^2\beta R_3+(2\al^2+6\al\beta^2)R_2,\\
\tilde{K}_{3,2} = & \  
\al^4(6R_5+6R_3R_2)+\al^3\beta (30R_4+12R_2^2)
+(18\al^3+48\al^2\beta^2)R_3
\\ & +(24\al^2\beta+24\al\beta^3)R_2,\\
\tilde{K}_{4,2} = & \ \al^5 (8R_6+8R_4R_2+4R_3^2)+\al^4\beta (68R_5+72 R_3R_2) +\al^4(80R_4+40R_2^2)\\ &
+\al^3\beta^2(208R_4+88R_2^2) +(268\al^3\beta+268\al^2\beta^3)R_3
+(32\al^3+212\al^2\beta^2+120\al\beta^4)R_2,\\
\tilde{K}_{3,3}= &  \ \al^5 (9R_6+9R_4R_2+9R_3^2+3R_2^3)+\al^4\beta (72R_5+81 R_3R_2) +\al^4(75R_4+27R_2^2)\\ &
+\al^3\beta^2(213R_4+90R_2^2) +(261\al^3\beta+270\al^2\beta^3)R_3
+(36\al^3+210\al^2\beta^2+120\al\beta^4)R_2,\\
\tilde{K}_{2,2,2} = & \ \al^4 (40R_5+64R_3R_2)+\al^3\beta (176R_4+96R_2^2)+(80\al^3+256\al^2\beta^2)R_3
\\ & +(104\al^2\beta+120\al\beta^3)R_2.
\end{split}
\end{equation*}

Tables giving $K_\mu$ for $|\mu|-l(\mu)\le 8$, $K_r$ for $r\le 20$ and $\tilde{K}_{rs}$ for $r+s\le 18$ are available on a web page~\cite{W}. Of course for $\al=1$ we recover the classical framework of Kerov-Biane-F\'eray mentioned at the beginning.

The purpose of this paper is twofold:
\begin{enumerate}
\item[(i)] to present a method for computing the expansion of $\vartheta^\la_\mu(\al)$ in terms of the free cumulants $R_i$ and other related bases,
\item[(ii)] to give precise conjectures about the polynomials $K_r$ and the nonnegativity of their coefficients. 
\end{enumerate}
Here we shall only indicate that $K_r$ is conjectured to write as
\[K_r = \sum_{\begin{subarray}{c}0\le j \le i \le r-1 \\ 2i-j \le r-1\end{subarray}}
 \al^{r-i} \beta^j \sum_{|\rho|=r-2i+j+1}
a_{ij}(\rho)  \, \prod_{k\ge 1} R_{\rho_k},\]
which may be easily checked on the previous examples. The coefficients $a_{ij}(\rho)$ will be studied below. 
 
Our results make highly plausible the existence of some combinatorial pattern, generalizing the one given by F\'eray~\cite{F} when $\alpha=1$. We conjecture the existence of such an $(\alpha,\beta)$-scheme, underlying the classical theory of Jack polynomials, where $\beta$ would play a role similar to $\al$.

It is also a highly remarkable fact that our method, results and conjectures remain entirely valid when $\beta$ is an independent parameter distinct from $1-\al$. This suggests the existence of a two-parameter $(\alpha,\beta)$-generalization of Jack polynomials.

The paper is organized as follows. In Section 2 we fix our notations about partitions and (shifted) symmetric functions. Section 3 is devoted to the expansion of Jack polynomials in terms of power sums. Section 4 recalls many useful formulas for ``Lagrange involution''. In Section 5 we define the anisotropic Young diagram of a partition and its free cumulants. In Section 6 we study the associated transition measure. Our method to compute $\vartheta^\la_\mu(\al)$ in terms of the free cumulants is presented in Sections 7 to 9. Many conjectures about the polynomials $K_r$ are given in Sections 10-11. Two other bases related with free cumulants are introduced  in Sections 12-13, and the corresponding expansions of $K_r$ are considered. Section 14 extends the previous conjectures to $\tilde{K}_\mu$. A two-parameter $(\alpha,\beta)$-generalization of our results is presented in Section 15.

\section{Generalities}

The standard references for symmetric functions and Jack polynomials are~\cite[Section 6.10]{Ma} and~\cite{S}. 

\subsection{Symmetric functions}

A partition $\la= (\la_1,...,\la_l)$
is a finite weakly decreasing
sequence of nonnegative integers, called parts. The number
$l(\la)$ of positive parts is called the length of
$\la$, and $|\la| = \sum_{i = 1}^{l(\la)} \la_i$
the weight of $\la$. 

For any integer $i\geq1$,
$m_i(\la) = \textrm{card} \{j: \la_j  = i\}$
is the multiplicity of the part $i$ in $\la$.  Clearly
$l(\la)=\sum_{i} m_i(\la)$ and
$|\la|=\sum_{i} im_i(\la)$.
We shall also write
$\la=(1^{m_1(\la)},2^{m_2(\la)},\ldots)$ and set
\[z_\la  = \prod_{i \ge  1} i^{m_i(\lambda)} m_i(\lambda) !, \qquad u_\la=l(\la)!/\prod_{i\ge 1} m_i(\la)!.\]
We identify $\la$ with its Ferrers diagram  
$\{ (i,j) : 1 \le i \le l(\la), 1 \le j \le {\la}_{i} \}$. We denote $\la^\prime$ the partition conjugate to $\la$. We have $m_i(\la^\prime)=\la_i-\la_{i+1}$, and $\la^\prime_i=\sum_{j \ge i} m_j(\la)$.

Let $A=\{a_1,a_2,a_3,\ldots\}$ be a (possibly infinite) set of independent indeterminates (an alphabet) and $\mathbb{S}[A]$ be the corresponding algebra of symmetric functions with coefficients in $\mathbf{Q}$.
The power sum symmetric functions
are defined by $p_{k}(A)=\sum_{i \ge 1} a_i^k$.
Elementary and complete symmetric functions $e_{k}(A)$ and $h_{k}(A)$
are defined by their generating functions
\[ E_t(A)=\prod_{i \ge 1} (1 +ta_i) =\sum_{k\geq0} t^k\, e_k(A), \qquad
H_t(A)=\prod_{i \ge 1}  \frac{1}{1-ta_i} = \sum_{k\geq0} t^k\, h_k(A) .\]

If $A$ is infinite, each of these three sets forms an algebraic basis of $\mathbb{S}[A]$, which can thus be viewed as an abstract algebra $\mathbb{S}$ generated by functions $e_{k}$, $h_{k}$ or $p_{k}$.
For any partition $\mu$, the symmetric functions $e_{\mu}$, $h_{\mu}$
and $p_{\mu}$ defined by
\[f_{\mu}=
\prod_{i=1}^{l(\mu)}f_{\mu_{i}}=\prod_{i\geq1}f_i^{m_{i}(\mu)},\]
where $f_i$ stands for $e_i$, $h_i$ or $p_i$ respectively,
form a linear basis of $\mathbb{S}$. Another classical
basis is formed by the monomial symmetric functions
$m_{\mu}$, defined as the sum of all distinct monomials
whose exponent is a permutation of $\mu$.

\subsection{Shifted symmetric functions}

Although the theory of symmetric functions goes back to the early 19th century, the notion of ``shifted symmetric'' functions is quite recent. We refer to~\cite{Ok1,Ok2,Ok3} and to other references given there.

Let $\mathbf{Q}(\alpha)$ be the field of rational functions in
some indeterminate $\alpha$ (which may be considered as a positive real number). A polynomial in $N$ indeterminates $x=(x_1,\ldots,x_N)$ with coefficients in $\mathbf{Q}(\alpha)$ is said to be ``shifted symmetric'' if it is symmetric in the $N$ ``shifted variables'' 
$x_i-i/\alpha$. 

Dealing with an infinite set of indeterminates $x=\{x_1,x_2,\ldots\}$, in analogy with symmetric functions, 
a ``shifted symmetric function'' $f$ is a family $\{f_i, i\ge 1\}$ such that  $f_i$ is shifted symmetric in $(x_1,x_2,\ldots,x_i)$, with the stability property $f_{i+1}(x_1,x_2,\ldots,x_i,0)=f_i(x_1,x_2,\ldots,x_i)$.

This defines $\mathcal{S}^{\ast}$, the shifted symmetric algebra with coefficients in $\mathbf{Q}(\alpha)$. An element $f\in \mathcal{S}^{\ast}$ may be evaluated at any sequence 
$(x_1,x_2,\ldots)$ with finitely many non zero terms, hence at any partition $\la$. Moreover by analyticity, $f$ is entirely determined by its restriction $f(\la)$ to partitions. Therefore $\mathcal{S}^{\ast}$ is usually considered as a function 
algebra on the set of partitions.

An algebraic basis of $\mathcal{S}^{\ast}$ is obtained as follows~\cite[Section 7, p. 3467]{La1}. Given a partition $\la$, the ``$\al$-content'' of any node $(i,j) \in \la$ is defined as $j-1-(i-1)/\al$. Consider the finite alphabet of $\al$-contents
\[C_\la=\{j-1-(i-1)/\al,\,(i,j) \in \la\}.\]
The symmetric algebra $\mathbb{S}[C_\la]$ is generated by the power sums
\begin{equation}
p_k(C_\la) = \sum_{(i,j) \in \la} (j-1-(i-1)/\al)^k\qquad (k\ge 1).
\end{equation}

It is known~\cite[Lemma 7.1, p. 3467]{La1} that the quantities $p_k(C_\la)$ are shifted symmetric functions of $\la$. As a direct consequence, the shifted symmetric algebra $\mathcal{S}^{\ast}$ is algebraically generated by the functions $\{p_k(C_\la), k\ge 1\}$ together with $p_0(C_\la)=|\la|$, the cardinal of the alphabet $C_\la$.

\section{Jack polynomials}

\subsection{Notations}

Let $\mathcal{S}=\mathbb{S}\otimes\mathbf{Q}(\alpha)$
be the algebra of symmetric functions with coefficients in $\mathbf{Q}(\alpha)$. The parameter $\al$ being kept fixed, for clarity of display, we shall omit its dependence in \textit{any notation} below.

The algebra $\mathcal{S}$ may be endowed with a scalar product 
$<\, , \,>$ for which we have two orthogonal bases, both indexed by partitions:
\begin{enumerate}
\item[(i)] the basis of power sum symmetric functions, with
\[<p_\la,p_\mu>=\delta_{\la \mu}\,\alpha^{l(\la)} z_{\la},\]
\item[(ii)] the basis of (suitably normalized) Jack symmetric functions, with
\[<J_\la,J_\mu>=\delta_{\la \mu}\,h_\la h^{\prime}_\la,\]
\end{enumerate}
and
\begin{equation*}
h_\la=\prod_{(i,j) \in \la} \left(\la^{\prime}_j-i+1+\al(\la_i-j)\right),\quad \quad
h^{\prime}_\la=\prod_{(i,j) \in \la}  \left(\la^{\prime}_j-i+\al(\la_i-j+1)\right).
\end{equation*}

We write $\theta^{\la}_{\rho}$ for the transition matrix between these two orthogonal bases, namely
\begin{equation}
J_{\la}=\sum_{|\rho|= |\la|} \theta^{\la}_{\rho} \,p_{\rho}.
\end{equation} 
Let $\mu$ be a partition with weight $r=|\mu|\le |\la|=n$ and $(\mu,1^{n-r})$ the partition obtained by adding $n-r$ parts $1$. We define 
\[\vartheta^{\la}_{\mu}=z_{\mu}\, \theta^{\la}_{\mu,1^{n-r}}.\]
It is known~\cite[Proposition 2]{La4} that $\vartheta^{\la}_{\mu}$ is a shifted symmetric function of $\la$. 

By restriction of Jack symmetric functions to a finite alphabet $x=(x_1,\ldots,x_N)$ we obtain Jack polynomials, which are eigenfunctions of the differential operator
\[D_2=\sum_{i=1}^N x_i^2\, \frac{\partial^2}{\partial x_i^2}
+\frac{2}{\al} \sum_{\begin{subarray}{c}i,j=1\\i\neq j\end{subarray}}^N 
\frac{x_i^2}{x_i-x_j}\frac{\partial}{\partial x_i}.\]
We have~\cite[p. 84]{S} 
\begin{equation}
D_2 J_\la=2\big(p_1(C_\la)+|\la|(N-1)/\al\big)J_\la.
\end{equation} 

\subsection{Pieri formula}

For any partition $\la$ and any integer $1 \le i \le l(\la)+1$, we denote by $\la^{(i)}$ the partition $\nu$ (if it exists) obtained by adding a node on the row $i$ of the diagram of $\la$, i.e. $\nu_j=\la_j$ for $j\neq i$ and $\nu_i=\la_i +1$. 

Given a partition $\mu$, for any integer $r\ge 1$ we denote by $\mu \cup r$ the partition obtained by adding a part $r$, and $\mu \setminus r$ the partition (if it exists) obtained by subtracting $r$. We write $\mu_{\downarrow (r)}=\mu \setminus r\cup (r-1)$ and $\mu_{\uparrow (r)}=\mu \setminus r\cup (r+1)$. 

Jack symmetric functions satisfy the following generalization of the Pieri formula~\cite{Ma,S}: 
\begin{equation}
p_1 \,J_\la =\sum_{i=1}^{l(\la)+1} 
c_i(\la) \, J_{\la^{(i)}}.
\end{equation}
The Pieri coefficients $c_i(\la)$ have the following analytic 
expression~\cite{La0}:
\[c_i(\la) = \frac {1}{\alpha \la_i+l(\la)-i+2}
\prod_{\begin{subarray}{c}j=1 \\ j \neq i\end{subarray}}^{l(\la)+1} 
\frac{\alpha(\la_i-\la_j)+j-i+1}
{\alpha(\la_i-\la_j)+j-i}.\]
The differential operator
\[E_2=\sum_{i=1}^N x_i^2\, \frac{\partial}{\partial x_i}\]
is independent of $N$. Denoting $p_1$ the multiplication operator $f\rightarrow p_1f$, we have easily
\begin{equation*}
E_2=\frac{1}{2} [D_2,p_1]-\frac{1}{\al}(N-1)p_1.
\end{equation*}
Due to (3.2) this implies
\begin{equation}
E_2 J_\la=\sum_{i=1}^{l(\la)+1} 
c_i(\la) \,\big(\la_i-(i-1)/\al\big)\, J_{\la^{(i)}}.
\end{equation}

We shall need the following equivalent form of equations (3.3)-(3.4). Expanding Jack polynomials in terms of power sums by (3.1), we obtain
\begin{equation}
\begin{split}
\theta^\la_{\rho_{\downarrow (1)}}&=\sum_{i=1}^{l(\la)+1} 
c_i(\la) \, \theta^{\la^{(i)}}_\rho,\\
\sum _{r\ge 1}
r\big(m_r(\rho)+1\big)\theta^\la_{\rho_{\downarrow (r+1)}}&=
\sum_{i=1}^{l(\la)+1} 
c_i(\la) \, \,\big(\la_i-(i-1)/\al\big)\,\theta^{\la^{(i)}}_\rho.
\end{split}
\end{equation}
The first relation is obvious. The second is a direct consequence of
\[E_2p_\rho=\sum_{r\ge 1} rm_r(\rho)\,p_{\rho_{\uparrow (r)}},\]
and the fact that if $\sigma=\rho_{\uparrow (r)}$, then $\rho=\sigma_{\downarrow (r+1)}$.
\begin{theo}~\label{th1}
Up to the multiplicative constant $\theta_{(1)}^{(1)}=1$, the system (3.5) totally determines the family of Jack polynomials $J_\la$.
\end{theo}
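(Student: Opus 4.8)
The claim is that the system (3.5), together with the normalization $\theta^{(1)}_{(1)}=1$, determines all the coefficients $\theta^\la_\rho$. The natural approach is induction on $n=|\la|=|\rho|$, using (3.5) to pass from level $n-1$ to level $n$. So suppose inductively that all $\theta^\mu_\sigma$ with $|\mu|=|\sigma|=n-1$ are known; I want to recover every $\theta^\la_\rho$ with $|\la|=|\rho|=n$. Fix such a $\la$. The two relations in (3.5) have the form (for each $\rho$ with $|\rho|=n-1$)
\[
\sum_{i=1}^{l(\la)+1} c_i(\la)\,\theta^{\la^{(i)}}_\rho \;=\; \text{(known)},\qquad
\sum_{i=1}^{l(\la)+1} c_i(\la)\,\big(\la_i-(i-1)/\al\big)\,\theta^{\la^{(i)}}_\rho \;=\; \text{(known)},
\]
where the right-hand sides are expressed in terms of level-$(n-1)$ data, hence known by induction (note $\rho_{\downarrow(1)}$ and the $\rho_{\downarrow(r+1)}$ appearing on the left sides of (3.5) have weight $n-1$). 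Here the unknowns are the numbers $\theta^{\nu}_\rho$ for the various partitions $\nu=\la^{(i)}$ of $n$ obtained by adding one box to $\la$.

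**Setting up the linear system.** The key observation is to reorganize these equations not around a single $\la$ but around the whole level $n$. For each partition $\nu$ of $n$ and each way of removing a box to get $\la=\nu\setminus\square$ on row $i$, equation (3.5) involving $\la$ contains the unknown $\theta^\nu_\rho$. I would argue that the resulting linear system in the unknowns $\{\theta^\nu_\rho : |\nu|=n\}$ (for fixed $\rho$, running over all $\la$ with $|\la|=n-1$) has a unique solution. The cleanest way: show that the ``boundary'' partition $\nu$ — concretely, process partitions $\nu$ of $n$ in an order refining the dominance order (or simply by decreasing $\nu_1$, then lexicographically) — can be solved for one at a time. When adding a box to $\la$ in the first available row gives a new ``largest'' $\nu$, the corresponding instance of (3.5) expresses that one new unknown $\theta^\nu_\rho$ in terms of already-determined ones, provided the relevant Pieri coefficient $c_i(\la)$ is nonzero. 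From the explicit formula for $c_i(\la)$ one checks $c_1(\la)\neq 0$ always (the product telescopes and no factor vanishes for admissible box additions), which gives the needed pivot. Alternatively, and perhaps more robustly, I would use the \emph{two} equations of (3.5) simultaneously: the $2\times 2$ (or larger) subsystem associated with a $\la$ that has exactly two legal box-additions to ``frontier'' positions is solved because the $2\times 2$ matrix with rows $(c_1,c_2)$ and $(c_1(\la_1),c_2(\la_2-1/\al))$ has determinant $c_1 c_2\,(\la_1-\la_2+1/\al)\neq 0$.

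**The main obstacle.** The heart of the argument — and where I expect to spend the real effort — is the bookkeeping that shows the linear system ``triangularizes'': i.e. choosing the right order on partitions of $n$ so that each new instance of (3.5) introduces exactly one genuinely new unknown $\theta^\nu_\rho$, with all other unknowns in that equation already pinned down, and with a nonvanishing coefficient. One must be careful that the map $\la\mapsto\{\la^{(i)}\}$ and its inverse (removable boxes of $\nu$) interact correctly with the chosen order, and that the base case ($n=1$, forced by $\theta^{(1)}_{(1)}=1$) launches the induction. A secondary but routine point is verifying that $c_i(\la)\neq 0$ for every legal box addition from the explicit product formula, and that the combinations of $c_i(\la)$ and $\la_i-(i-1)/\al$ entering the $2\times 2$ determinants are nonzero in $\mathbf{Q}(\al)$; this uses only that $\la_i-\la_j$ is a nonneg\-ative integer and $1/\al$ is transcendental over $\mathbf{Q}$ (or a generic positive real), so no accidental cancellation occurs.

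**Remark on what is really being proved.** Existence of the $\theta^\la_\rho$ is not in question — the Jack polynomials exist and satisfy (3.3)–(3.4), hence (3.5). The content of Theorem~\ref{th1} is \emph{uniqueness}: any family of scalars $\{\theta^\la_\rho\}$ obeying (3.5) with $\theta^{(1)}_{(1)}=1$ must coincide with the Jack data. So strictly the proof is: assume two solutions, take their difference, and run the induction above to conclude the difference vanishes at every level; the nonvanishing pivots from the Pieri coefficients do all the work.
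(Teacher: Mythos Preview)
Your sketch has the right skeleton—uniqueness by induction, using both equations of (3.5) simultaneously to obtain a $2\times 2$ system with nonzero determinant—but the specific triangularization you propose does not work as written, and the paper organizes the induction differently.

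The concrete problem: you suggest pivoting on row~$1$ (take $\la$ with $\nu=\la^{(1)}$) while processing partitions $\nu$ by \emph{decreasing} $\nu_1$. But if $\nu=\la^{(1)}$, then every other $\la^{(i)}$ has first part $\la_1=\nu_1-1<\nu_1$, so those partitions come \emph{later} in your order, not earlier; their coefficients are not yet known and cannot be moved to the right-hand side. Your fallback $2\times 2$ with rows $(c_1,c_2)$ and $(c_1\la_1,\,c_2(\la_2-1/\al))$ has the same defect: when $\la$ has more than two addable boxes, the genuinely new unknowns are not at the \emph{first} two addable positions but at the \emph{last} two.

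The paper's remedy is to induct not on $|\la|$ but on the pair $\bigl(l(\la),\,\la_{l(\la)}\bigr)$. Writing (3.5) for $\la=(\la_1,\dots,\la_{d-1},u)$ with $d=l(\la)$, every term $\theta^{\la^{(i)}}_\rho$ with $i<d$ has length $d$ and last part $u$, hence is covered by the inductive hypothesis; the only two new unknowns are $\theta^{(\la_1,\dots,\la_{d-1},u+1)}_\rho$ and $\theta^{(\la_1,\dots,\la_{d-1},u,1)}_\rho$. The resulting $2\times 2$ system has determinant $c_d(\la)\,c_{d+1}(\la)\,(u+1/\al)$, nonzero because $u<\la_{d-1}$ ensures the box at row $d$ is actually addable. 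Solving it advances the inner induction (last part $u\to u+1$) and simultaneously initiates the outer induction at length $d+1$ with last part $1$.

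In short: same engine (two Pieri relations give a solvable $2\times 2$), but the pivot must sit at rows $d$ and $d+1$, and the clean induction variable is $(\text{length},\text{last part})$ rather than weight plus an ordering within each level. Your decreasing-lex order can in fact be made to work, but only once you move the pivot to the bottom of the diagram; as stated, the row-$1$ pivot is the gap.
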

\begin{proof}
Starting from the initial case $\la=\rho=(1)$, we compute any $\theta^{\la}_{\rho}$ by two inductions, firstly on the length $l(\la)$, secondly on the value of the last part $\la_{l(\la)}$. 

Our inductive assumption is the following: given a length $d\ge 1$, the quantity $\theta^{\la}_\rho$ is known for any $\la=(\la_1,\ldots,\la_{d-1},\la_d)$ with $\la_d\le u < \la_{d-1}$ and any $\rho$ with $|\rho|=|\la|$. 

If we write equations (3.5) for $\la=(\la_1,\ldots,\la_{d-1},u)$ and $|\rho|=|\la|+1$, we obtain
\begin{equation*}
\begin{split}
c_d(\la) \,\theta^{(\la_1,\ldots,\la_{d-1},u+1)}_\rho+
c_{d+1}(\la) \,\theta^{(\la_1,\ldots,\la_{d-1},u,1)}_\rho
&= [\cdots],\\
c_d(\la) \, \big(u-(d-1)/\al\big)\,\theta^{(\la_1,\ldots,\la_{d-1},u+1)}_\rho+
c_{d+1}(\la) \, \big(-d/\al\big)\,\theta^{(\la_1,\ldots,\la_{d-1},u,1)}_\rho
&= [\cdots],
\end{split}
\end{equation*}
where both right-hand sides $[\cdots]$ are known in view of the inductive hypothesis. Actually they are linear combinations firstly of quantities $\theta^{\la}_{\rho_{\downarrow (r)}}$ and secondly of quantities $\theta^{\la^{(i)}}_{\rho}$ with $\la^{(i)}=(\la_1,\ldots,\la_i+1,\ldots,\la_{d-1},u)$. 

This linear system has a discriminant equal to $c_d(\la)c_{d+1}(\la) (u+1/\al)$, which is $\neq 0$ due to $u < \la_{d-1}$. Hence the system has two unique solutions. In other words $\theta^{\la}_\rho$ is known for $\la=(\la_1,\ldots,\la_{d-1},u+1)$ and $\la=(\la_1,\ldots,\la_{d-1},u,1)$. 

This proves the second inductive property (on the value of $\la_{d}$), and initiates a new induction for the length $l(\la)=d+1$ with $\la_{d+1}=1$. Hence the statement.
\end{proof}

Writing $\rho=(\mu,1^{n-r})$ the system (3.5) can be easily translated in terms of the quantities $\vartheta^{\la}_{\mu}$. For any partition $\mu$ having no part $1$, we obtain
\begin{equation}
\begin{split}
\sum_{i=1}^{l(\la)+1} 
c_i(\la) \, \vartheta^{\la^{(i)}}_\mu&=\vartheta^\la_\mu,\\
\sum_{i=1}^{l(\la)+1} 
c_i(\la)  \,\big(\la_i-(i-1)/\al\big)\,\vartheta^{\la^{(i)}}_\mu&=2 (|\la|-|\mu|+2)\,m_2(\mu)\,\vartheta^\la_{{\mu \setminus 2}}+
\sum _{r\ge 3}r\,m_r(\mu)\,\vartheta^\la_{\mu_{\downarrow (r)}}.
\end{split}
\end{equation}
In the right-hand side the case $r=2$ must be distinguished because the partition $\mu_{\downarrow (2)}$ has one part $1$.

The system (3.6) is our starting point for the computation of $\vartheta^{\la}_{\mu}$.

\section{Lagrange involution}

\subsection{Multiplication of alphabets}

Given two alphabets $A$ and $B$, their sum $A+B$ 
and their difference $A-B$ are defined by the generating functions
\begin{equation*}
H_t(A+B)=H_t(A)\,H_t(B), \quad\quad 
H_t(A-B)=H_t(A)/{H_t(B)}.
\end{equation*}
Equivalently
\begin{equation}
h_n(A+B)=\sum_{k=0}^n h_k(A)\,h_{n-k}(B).
\end{equation}

Given a \textit{positive integer} $x$, the  multiple $xA$ is the alphabet formed by $x$ copies of $A$. Obviously we have
\[p_n(xA)=x p_n(A),\quad\quad
p_\mu(xA)=x^{l(\mu)} p_\mu(A).\]
And the generating functions of this alphabet
\begin{equation}
E_t(xA)=E_t(A)^x,\qquad H_t(xA)=H_t(A)^x
\end{equation}
are the product of $x$ copies of $E_t(A)$ or $H_t(A)$.

Both properties are chosen as a \textit{definition} of the ``multiple'' $xA$ when $x$ is any \textit{real number}. The symmetric functions of $xA$ are defined accordingly.

In this framework we have two important ``Cauchy formulas''~\cite[Section 1.6]{Las}, see also~\cite[pp. 62-65]{Ma} or~\cite[p. 222]{La5}. Firstly
\begin{equation}
\begin{split}
h_n(xA)&=\sum_{|{\mu }| = n} 
 z_\mu^{-1} x^{l(\mu)} \, p_{\mu} (A),\\
e_{n} (xA)&= \sum_{|{\mu }| = n} 
(-1)^{n-l(\mu)}  \, z_\mu^{-1} x^{l(\mu)} \, p_{\mu} (A).
\end{split}
\end{equation}
Secondly
\begin{equation}
\begin{split}
h_n(xA)&=\sum_{|\mu|=n} \binom{x}{l(\mu)} u_\mu h_\mu(A)\\
&=(-1)^{n} \sum_{|\mu|=n} \binom{-x}{l(\mu)} u_\mu e_\mu(A).
\end{split}
\end{equation}
Equivalently
\begin{equation}
\begin{split}
e_n(xA)&=\sum_{|\mu|=n} \binom{x}{l(\mu)} u_\mu e_\mu(A)\\
&=(-1)^{n}\sum_{|\mu|=n} \binom{-x}{l(\mu)} u_\mu h_\mu(A).
\end{split}
\end{equation}
In this paper the binomial notation $\binom{x}{k}$ will always stand for $x(x-1)\cdots(x-k+1)/k!$. Multiplication of alphabets and Cauchy formulas are better understood in the language of $\lambda$-rings. See~\cite[Chapter 2]{Las} (or~\cite[Section 3]{La5} for an elementary account) and also Section 13 below.

\subsection{Lagrange formula}
Given an alphabet $A$, an involution $f \rightarrow f^{*}$ can be defined on $\mathbb{S}[A]$ as follows (\cite[Example 1.2.24, p. 35]{Ma}, \cite[Section 2.4]{Las}). Let
\[u=tH_t(A)=\sum_{k\geq0} t^{k+1}\, h_k(A).\]
Then $t$ can be expressed as a power series in $u$, its compositional inverse, namely
\[t=uH_u^{*}(A)=\sum_{k\geq0} u^{k+1}\, h^{*}_k(A).\]
The map $h_k(A) \rightarrow h_k^{*}(A)$ extends to an involution of $\mathbb{S}[A]$, called ``Lagrange involution''.

The Lagrange inversion formula asserts, in one of its equivalent forms~\cite{GJ}, that
\begin{equation}
[u^n]H_u^{*}(A)^k=\frac{k}{n+k}[t^n]H_t(A)^{-n-k},
\end{equation}
where $[t^n]F_t$ denotes the coefficient of $t^n$ in the formal series $F_t$. This fundamental result has many consequences, listed below.

\subsection{Consequences}

Firstly omitting to mention the alphabet $A$, we have
\begin{equation}
\begin{split}
(n+1)h_n^{*}&=\sum_{|\mu|=n} (-n-1)^{l(\mu)} z_\mu^{-1} p_\mu\\
&=\sum_{|\mu|=n}  \binom{-n-1}{l(\mu)} u_\mu h_\mu\\
&=(-1)^n\sum_{|\mu|=n} \binom{n+1}{l(\mu)} u_\mu e_\mu.
\end{split}
\end{equation}
Secondly
\begin{equation}
\begin{split}
(n-1)e_n^{*}&=(-1)^{n-1}\sum_{|\mu|=n} (1-n)^{l(\mu)} z_\mu^{-1} p_\mu\\
&=-\sum_{|\mu|=n} \binom{n-1}{l(\mu)} u_\mu e_\mu\\
&=(-1)^{n-1}\sum_{|\mu|=n} \binom{-n+1}{l(\mu)} u_\mu h_\mu.
\end{split}
\end{equation}
Finally we have
\begin{equation}
p_n^{*}=\sum_{|\mu|=n} (-n)^{l(\mu)} z_\mu^{-1} p_\mu.
\end{equation}

This formula exhibits an important connection between Lagrange involution and multiplication of alphabets. 
By the classical Cauchy formulas 
\[h_n=\sum_{|\mu|=n} z_\mu^{-1} p_\mu,\quad\quad
e_n=\sum_{|\mu|=n} (-1)^{n-l(\mu)}z_\mu^{-1} p_\mu,\]
it reads
\[p_n^{*}(A)=\sum_{|\mu|=n} z_\mu^{-1} p_\mu(-nA)=h_n(-nA)= (-1)^n e_n(nA).\]
Similarly from (4.7) and (4.8) we obtain
\begin{equation}
\begin{split}
(n+1)h_n^{*}(A)&=h_n((-n-1)A)=(-1)^ne_n((n+1)A),\\
(n-1)e_n^{*}(A)&=(-1)^{n-1}h_n((1-n)A)=-e_n((n-1)A).
\end{split}
\end{equation}

These properties may be extended as follows. For any real number $x$ and any alphabet $A$, Lagrange formula (4.6) yields
\begin{equation}
\begin{split}
(n+x)h_n(xA)
&=x \sum_{|\mu|=n} \binom{-n-x}{l(\mu)} u_\mu h^*_\mu(A)\\
&=(-1)^n x \sum_{|\mu|=n} \binom{n+x}{l(\mu)} u_\mu e^*_\mu(A).
\end{split}
\end{equation}
Similarly
\begin{equation}
\begin{split}
(n-x)e_n(xA)
&=-x\sum_{|\mu|=n} \binom{n-x}{l(\mu)} u_\mu e^*_\mu(A)\\
&=(-1)^{n-1}x\sum_{|\mu|=n} \binom{-n+x}{l(\mu)} u_\mu h^*_\mu(A).
\end{split}
\end{equation}
From which we obtain
\begin{equation}
\begin{split}
(n+x)h_n^{*}(xA)&=x\,h_n((-n-x)A)=(-1)^nx\,e_n((n+x)A),\\
(n-x)e_n^{*}(xA)&=(-1)^{n-1}xh_n((x-n)A)=-x\,e_n((n-x)A).
\end{split}
\end{equation}

\begin{proof} Relations (4.7)-(4.10) are obtained from (4.11)-(4.13) by specializing $x=1$. The proofs of the latter are strictly parallel consequences of the Lagrange inversion formula (4.6). As an example let us prove the second part of (4.11). We have
\begin{equation*}
\begin{split}
h_n^*(xA)&=[u^n]H_u^*(A)^x=\frac{x}{n+x}[t^n]H_t(A)^{-n-x}\\
&=\frac{x}{n+x}[t^n]\Big(1+\tilde{E}_{-t}(A)\Big)^{n+x}\\
&=\frac{x}{n+x}[t^n]\Big(\sum_{d\ge 0}\binom{n+x}{d}
\tilde{E}_{-t}(A)^d\Big),
\end{split}
\end{equation*}
with $\tilde{E}_{-t}=\sum_{k\geq1} (-t)^k\, e_k$. But
\[[t^n]\tilde{E}_{-t}(A)^d=(-1)^n\sum_{\begin{subarray}{c} |\mu|=n\\l(\mu)=d\end{subarray}} u_\mu e_\mu(A).\]
\end{proof}

\subsection{Some identities}

For any partition $\rho$ define
\[v_\rho=\prod_{i\ge 1} (i-1)^{m_i(\rho)},\quad \quad w_\rho=v_\rho \sum_{i\ge 2} \frac{im_i(\rho)}{i-1}.\]
For any alphabet $A$ and any real number $z$, we have the remarkable identities
\begin{equation}
\begin{split}
&\sum_{k=0}^n \frac{z}{z+k} h_{k}(-(z+k)A) h_{n-k}((z+k-1)A)=(-1)^n e_n(A),\\
&\sum_{k=0}^n \frac{1}{z+k} h_{k}(-(z+k)A) h_{n-k}((z+k)A)=0,\\
&\sum_{k=0}^n h_{k}(-(z+k)A) h_{n-k}((z+k+1)A) =(-1)^n \sum_{|\rho|=n} v_\rho u_\rho e_\rho(A), \\
&\sum_{k=0}^n h_{k}(-(z+k)A) h_{n-k}((z+k)A) =(-1)^{n} \sum_{|\rho|=n} w_\rho \frac{u_\rho}{l(\rho)} e_\rho(A). 
\end{split}
\end{equation}
The assumption $e_1(A)=0$ is needed for the last one. Observe that the right-hand sides are independent of $z$. 

Although these identities have deep connections with $\la$-ring theory, an elementary proof may be given as follows. Expand each $h_m(xA)$ in terms of the basis $e_\rho(A)$ by the Cauchy formula (4.4). Then we are left with a relation involving binomial coefficients. For instance the third identity amounts to write
\[\sum_{\mu\cup\nu=\rho}u_\mu u_\nu \binom{|\mu|t+z}{l(\mu)} \binom{-|\mu|t-z-1}{l(\nu)} =u_\rho
\prod_{i\ge 1} (ti-1)^{m_i(\rho)},\]
at $t=1$, the sum taken over all decompositions of $\rho$ into two partitions (possibly empty). 

This result is a consequence of a more general property, obtained by replacing the parts of $\rho$ by any alphabet $A$, namely
\[\sum_{X\subset A}(-1)^{n-\mathrm{card} X} \prod_{i=1}^{n}(|X|t-\mathrm{card} X+z+i)=n! \,\prod_{a\in A}(ta-1),\]
with $n=\mathrm{card} A$ and $|X|=\sum_{a\in X}a$. Omitting details, the other identities are implied by
\begin{equation*}
\begin{split}
&\sum_{X\subset A}(-1)^{n-\mathrm{card} X} |X| \prod_{i=1}^{n-2}(|X|t-\mathrm{card} X+z+i)=0\hspace{1,5cm} (n \neq 1),\\
&\sum_{X\subset A}(-1)^{n-\mathrm{card} X} \prod_{i=1}^{n-m}(|X|t-\mathrm{card} X+z+i)=0\hspace{2cm} (m=1,2),\\
&\sum_{X\subset A}(-1)^{n-\mathrm{card} X} |X|\, \prod_{i=1}^{n-1}(|X|t-\mathrm{card} X+z+i)=(n-1)! \frac{d}{dt} \prod_{a\in A}(ta-1). 
\end{split}
\end{equation*}

Four similar identities may be obtained by applying Lagrange involution to (4.14) and using (4.13), namely
\begin{equation}
\begin{split}
&\sum_{k=0}^n \frac{z+k-1}{z+n-1} h_{k}(zA) h_{n-k}(-(z+n-1)A)=(-1)^n e_n^{*}(A),\\
&\sum_{k=0}^n (z+k) h_{k}(zA) h_{n-k}(-(z+n)A)=0,\\
&\sum_{k=0}^n \frac{(z+k)(z+k+1)}{z(z+n+1)}h_{k}(zA) h_{n-k}(-(z+n+1)A) =(-1)^n \sum_{|\rho|=n} v_\rho u_\rho e_\rho^{*}(A), \\
&\sum_{k=0}^n \frac{(z+k)^2}{z(z+n)}h_{k}(zA) h_{n-k}(-(z+n)A) =(-1)^{n} \sum_{|\rho|=n} w_\rho \frac{u_\rho}{l(\rho)} e_\rho^{*}(A), 
\end{split}
\end{equation}
with $e_1(A)=0$ assumed for the last one.

\section{Free cumulants}

The following notions are due to Kerov~\cite{K1,K3}. Two increasing sequences $x_1,\ldots,x_{d-1},x_d$ and $y_1,\ldots,y_{d-1}$ with $x_1<y_1<x_2<\cdots<x_{d-1}<y_{d-1}<x_d$ are said to form a pair of interlacing sequences. The center of this pair is $\sum_i x_i-\sum_i y_i$.

Given a partition $\la$, its Young diagram $Y_\la \subset \mathbf{R}^2$ is defined as the collection of unit boxes centered on the nodes
$\{ (j-1/2,i-1/2), (i,j)\in \la \}$. 
An outside corner of $Y_\la$ is defined as the north-east corner of a corner box. An inside corner is the south-west corner of a corner box of the complement of $Y_\la$ in $\mathbf{R}^2$.

On $\mathbf{R}^2$ define the content function $c(u,v)=u-v$. Then it is easily checked that a pair of interlacing sequences is formed by the contents $x_1,\ldots,x_d$ of inside corners of $Y_\la$, and the contents $y_1,\ldots, y_{d-1}$ of its outside corners. This pair has center $0$, and we have $x_1=-l(\la)$ and $x_d=\la_1$. 

The collection of $1\times 1/\al$  boxes obtained from $Y_\la$ by the dilation $(u,v) \rightarrow (u,v/\al)$ is called the \textit{anisotropic} Young diagram $Y_\la(\al)$. Its corners define similarly two interlacing sequences $x_1(\al),\ldots,x_d(\al)$ and $y_1(\al),\ldots, y_{d-1}(\al)$. Defining the $\al$-content function by $c(u,v)=u-v/\al$, these sequences may also be understood as the $\al$-contents of inside and outside corners of $Y_\la$.

For instance if $\la=(4,3,3,3,1)$, the $x$ and $y$ sequences are respectively $(-5,-3,2,4)$ and $(-4,-1,3)$. The $x(\al)$ and $y(\al)$ sequences are respectively $(-5/\al,-4/\al+1,-1/\al+3,4)$ and $(-5/\al+1,-4/\al+3,-1/\al+4)$.

We have $x_1(\al)=-l(\la)/\al$ and $x_d(\al)=\la_1$. Moreover we may assume $d=\la_1+1$ and for $1\le k \le \la_1=d-1$ we may write
\begin{equation}
x_k(\al)=k-1-\la^\prime_k/\al,\quad \quad
y_k(\al)=k-\la^\prime_k/\al,
\end{equation}
\textit{provided we make the convention} that $x_i$ and $y_{i-1}$ should be omitted whenever $x_i=y_{i-1}$. Clearly this pair has center $0$.

For instance when $\la=(4,3,3,3,1)$, we have $\la^\prime=(5,4,4,1)$ and with this convention, the $x(\al)$ and $y(\al)$ sequences are respectively $(-5/\al,-4/\al+1,-4/\al+2,-1/\al+3,4)$ and $(-5/\al+1,-4/\al+2,-4/\al+3,-1/\al+4)$.

The ``transition measure'' of $Y_\la(\al)$ is a measure $\omega_\la$ on the real line, supported on the finite set $\{x_1(\al),\ldots,x_d(\al)\}$. It is uniquely defined by its moment generating series
\begin{equation*}
\begin{split}
\mathcal{M}_\la(z)
&=z^{-1}+\sum_{k\ge 1}M_k(\la) z^{-k-1}\\
&= \frac{1}{z-x_d(\al)}\prod_{i=1}^{d-1}\frac{z-y_i(\al)}{z-x_i(\al)}.
\end{split}
\end{equation*}
It is known~\cite{V,Sp} that the free cumulants $\{R_k(\la), k\ge 1\}$ of $\omega_\la$ are generated by the compositional inverse
\[\mathcal{R}_\la(u)=\mathcal{M}_\la^{(-1)}(u) = u^{-1} + \sum_{k\ge1} R_k(\la) u^{k-1}.\] 
The Boolean cumulants $\{B_k(\la), k\ge 1\}$ of $\omega_\la$ are generated by the inverse series
\[\mathcal{B}_\la(z)=(\mathcal{M}_\la(z))^{-1}=z-\sum_{k\ge 1}B_k(\la) z^{1-k}.\]
Observe that $M_1(\la)=B_1(\la)=R_1(\la)=0$ and  $M_2(\la)=B_2(\la)=R_2(\la)=|\la|/\al$.

Defining the ``inside'' and ``outside'' alphabets 
\[I_\la=\{x_1(\al),\ldots,x_d(\al)\},\quad\quad 
O_\la=\{y_1(\al),\ldots, y_{d-1}(\al)\},\]
and their difference $A_\la=I_\la-O_\la$, we then have
\begin{equation*}
\begin{split}
\mathcal{M}_\la(z)&=z^{-1}H_{1/z}(A_\la),\\
\mathcal{B}_\la(z)&=z\big(H_{1/z}(A_\la)\big)^{-1}=zE_{-1/z}(A_\la),\\
\mathcal{R}_\la(u)&=u^{-1}\big(H_{u}^*(A_\la)\big)^{-1}=
u^{-1}E_{-u}^*(A_\la).
\end{split}
\end{equation*}
Equivalently 
\[M_k(\la)= h_k(A_\la),\quad \quad
B_k(\la)= (-1)^{k-1}e_k(A_\la),\quad \quad
R_k(\la)=(-1)^ke_k^*(A_\la).\] 

We may specialize the results of Section 4 at $A=A_\la$. From (4.5) et (4.8) we obtain
\begin{equation}
\begin{split}
B_n(\la)
&=- \sum_{|\mu|=n} (-1)^{l(\mu)} u_\mu M_\mu(\la),\\
(n-1)B_n(\la)
&=\sum_{|\mu|=n} \binom{n-1}{l(\mu)} u_\mu R_\mu(\la).
\end{split}
\end{equation}
From (4.4) et (4.7) we get
\begin{equation}
\begin{split}
M_n(\la)
&= \sum_{|\mu|=n}  u_\mu B_\mu(\la),\\
(n+1)M_n(\la)&=\sum_{|\mu|=n} \binom{n+1}{l(\mu)} u_\mu R_\mu(\la).
\end{split}
\end{equation}
Conversely (4.8) becomes
\begin{equation}
\begin{split}
(n-1)R_n(\la)&=-\sum_{|\mu|=n} (-1)^{l(\mu)}\binom{n-1}{l(\mu)} u_\mu B_\mu(\la),\\
&=-\sum_{|\mu|=n} (-1)^{l(\mu)}\binom{n+l(\mu)-2}{l(\mu)} u_\mu M_\mu(\la).
\end{split}
\end{equation}
From (4.4) and (4.11) we have also
\begin{equation}
\begin{split}
h_n(xA_\la)&=\sum_{|\mu|=n} \binom{x}{l(\mu)} u_\mu M_\mu(\la)\\
&=\frac{x}{n+x}\sum_{|\mu|=n} \binom{n+x}{l(\mu)} u_\mu R_\mu(\la).
\end{split}
\end{equation}
In particular for $x=1-n$, we obtain 
\begin{equation}
(1-n)R_n(\la)=h_n((1-n)A_\la).
\end{equation}
Finally the specialization of (4.15) at $A=A_\la$ and $z=1$ yields
\begin{equation}
\sum_{k=1}^{n}M_{k}(\la) \,
\sum_{|\mu|= n-k } \binom{-k}{l(\mu)} u_\mu R_\mu(\la)=R_n(\la),
\end{equation}
\begin{equation}
\sum_{k=1}^{n}M_{k-1}(\la) \,
\sum_{|\mu|= n-k } \binom{-k}{l(\mu)} u_\mu R_\mu(\la)=0,
\end{equation}
\begin{equation}
\sum_{k=2}^{n} (k-1) M_{k-2}(\la) \,
\sum_{|\mu|= n-k } \binom{-k}{l(\mu)} u_\mu R_\mu(\la)=
\sum_{|\rho|=n-2} \, v_\rho u_\rho R_\rho(\la),
\end{equation}
\begin{equation}
\sum_{k=2}^{n}  (k-1) M_{k-1}(\la) \,
\sum_{|\mu|= n-k }\binom{-k}{l(\mu)} u_\mu R_\mu(\la)=
\sum_{|\rho|=n-1} \, w_\rho \frac{u_\rho}{l(\rho)} R_\rho(\la).
\end{equation}

\section{The transition measure}

In the previous section, the transition measure $\omega_\la$ was defined by its moment generating series $\mathcal{M}_\la(z)$. Equivalently we can write
\[\omega_\la=\sum_{k=1}^d \sigma_k(\al)\, \delta_{x_k(\al)},\]
where $\delta_{u}$ is the Dirac measure at $u$, and the weights $\sigma_k(\al)$ are the ``transition probabilities''
\[\sigma_{k}(\al)=\prod_{i=1}^{k-1}\frac{x_k(\al)-y_i(\al)}{x_k(\al)-x_i(\al)}\prod_{j=k+1}^d\frac{x_k(\al)-y_{j-1}(\al)}{x_k(\al)-x_j(\al)}.\]
Both definitions are linked by the formula
\[\mathcal{M}_\la(z)=\sum_{k=1}^{d} 
 \frac{\sigma_k(\al)}{z-x_k(\al)}.\]
Of course multiplying both sides by $z$ and taking the limit $z\rightarrow \infty$, we obtain
\[\sum_{k=1}^{d} \sigma_k(\al)=1.\]

Now consider the Pieri coefficients $\{c_i(\la), 1\le i\le l(\la)+1\}$. If we restrict to a finite set of $N$
indeterminates $x=(x_1,\ldots,x_N)$, we have~\cite[Theorem 5.4]{S}
\[J_\la(1,\ldots,1)=\prod_{(i,j) \in \la} \left(N+\al(j-1)-i+1\right).\]
Writing the Pieri formula (3.3) at $(1,\ldots,1)$, and identifying coefficients of $N$, we easily obtain
\[\sum_{i=1}^{l(\la)+1} c_i(\la)=1.\]

Thus we are led to compare two discrete probability distributions $\{\sigma_k(\al),1\le k \le d\}$ and $\{c_i(\la), 1\le i\le l(\la)+1\}$. This has been done by Kerov in a rather different form~\cite[Lemma 7.2]{K3}. His following result is fundamental for our purpose since \textit{it connects Jack polynomials with the transition measure}.
\begin{theo}~\label{th2}
Given a partition $\la$, the weights
$\{\sigma_k(\al),1\le k \le d\}$ are the non-zero elements of $\{c_i(\la), 1\le i\le l(\la)+1\}$. More precisely we may write
\[\omega_\la=\sum_{i=1}^{l(\la)+1} c_i(\la)\, \delta_{\la_i-(i-1)/\al}.\]
Equivalently we have
\[\mathcal{M}_\la(z)=\sum_{i=1}^{l(\la)+1} 
 \frac{c_i(\la)}{z-\la_i+(i-1)/\al}.\]
\end{theo}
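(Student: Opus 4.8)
The plan is to compare the two rational functions of $z$ appearing in the claimed identity for $\mathcal{M}_\la(z)$, by showing they have the same poles with the same residues. The right-hand side $\sum_{i=1}^{l(\la)+1} c_i(\la)/(z-\la_i+(i-1)/\al)$ is a proper rational function (vanishing at $z\to\infty$ after noting $\sum_i c_i(\la)=1$ only contributes to the $z^{-1}$ term). First I would note that the numbers $\{\la_i-(i-1)/\al : 1\le i\le l(\la)+1\}$, with the convention that a value is dropped when $\la_i=\la_{i-1}$ (equivalently $\la_i-(i-1)/\al$ coincides with no genuinely new corner), are exactly the $\al$-contents of the inside corners of $Y_\la$, i.e. the sequence $\{x_1(\al),\dots,x_d(\al)\}$ written via (5.1) as $x_k(\al)=k-1-\la'_k/\al$. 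This is a purely combinatorial bookkeeping step: a row $i$ of $\la$ gives an inside corner precisely when $\la_i>\la_{i+1}$ (or $i=l(\la)+1$, the corner at the bottom), and tracking conjugate coordinates identifies $\la_i-(i-1)/\al$ with the corresponding $x_k(\al)$.

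Granting that the pole locations match, the heart of the matter is to show the residues agree, i.e. that $c_i(\la)=\sigma_k(\al)$ when $\la_i-(i-1)/\al=x_k(\al)$. For this I would evaluate the residue of $\mathcal{M}_\la(z)$ at $z=x_k(\al)$ directly from its product formula $\mathcal{M}_\la(z)=\frac{1}{z-x_d(\al)}\prod_{i=1}^{d-1}\frac{z-y_i(\al)}{z-x_i(\al)}$, which by definition gives exactly $\sigma_k(\al)=\prod_{i\ne k}\frac{x_k(\al)-y_{?}(\al)}{x_k(\al)-x_i(\al)}$ (with the $y$-index shifted appropriately). On the other side, I would take the analytic Pieri coefficient
\[c_i(\la)=\frac{1}{\al\la_i+l(\la)-i+2}\prod_{\begin{subarray}{c}j=1\\j\ne i\end{subarray}}^{l(\la)+1}\frac{\al(\la_i-\la_j)+j-i+1}{\al(\la_i-\la_j)+j-i}\]
and rewrite it in terms of $\al$-contents. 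Dividing numerator and denominator of each factor by $\al$, the factor becomes $\frac{(\la_i-(i-1)/\al)-(\la_j-j/\al)}{(\la_i-(i-1)/\al)-(\la_j-(j-1)/\al)}$, i.e. a ratio of differences of $\al$-contents of rows; the leading factor $1/(\al\la_i+l(\la)-i+2)$ becomes $\frac{1}{\al}\cdot\frac{1}{(\la_i-(i-1)/\al)-(\la_{l(\la)+1}-(l(\la)+1)/\al)}$ times a constant, matching the missing $y$-type factor at the bottom. After this substitution both $c_i(\la)$ and $\sigma_k(\al)$ are products over the same set of corners of ratios of the form (pole minus $y$-corner)/(pole minus $x$-corner), and one checks the multisets of $x$- and $y$-values in the two products coincide once the "convention" removing coincident $x_i=y_{i-1}$ pairs is taken into account — the cancellations in $c_i(\la)$ coming from parts $\la_j=\la_i$ are precisely the cancellations implementing Kerov's convention.

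The main obstacle I expect is the bookkeeping around the convention $x_i=y_{i-1}$: the analytic formula for $c_i(\la)$ has apparent zeros and poles coming from equal parts $\la_i=\la_j$, and these must be matched exactly against the dropped terms in the interlacing sequences (5.1) so that no spurious factor survives on either side. Carrying this out cleanly is best done by grouping rows of $\la$ by their common value (equivalently, working with the distinct parts and their multiplicities), rewriting both products as products over \emph{distinct} corner contents, and verifying term by term. Alternatively, one can avoid residues entirely: multiply the claimed identity by $z-x_d(\al)$ and by $\prod_{i<d}(z-x_i(\al))$ to clear denominators, obtaining a polynomial identity of bounded degree in $z$, and verify it by evaluating at the $d$ points $z=x_k(\al)$ plus matching the top-degree coefficient — this reduces everything to the single-point residue computation above. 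As a sanity check one recovers $\sum_i c_i(\la)=1$ from $z\mathcal{M}_\la(z)\to 1$ and $R_2(\la)=|\la|/\al$ from the coefficient of $z^{-2}$, which is $\sum_i c_i(\la)(\la_i-(i-1)/\al)$; comparing with $\sum_i\la_i=|\la|$ after using the first relation of (3.6)–(3.5) gives a consistency test of the whole identification.
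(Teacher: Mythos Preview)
Your approach is correct but takes a genuinely different route from the paper's. You match poles and residues directly: after observing that the nonzero $c_i(\la)$ occur exactly at rows where a box can be added (inside corners), you rewrite the analytic Pieri coefficient as a product of $\al$-content differences and telescope over blocks of equal parts to identify it with the corresponding $\sigma_k(\al)$. The paper proceeds indirectly: it first establishes the closed rising-factorial form
\[
\mathcal{M}_\la(z)=z^{-1}\,\frac{(-z-1/\al+1)_\la}{(-z-1/\al)_\la}\,\frac{(-z)_\la}{(-z+1)_\la}
\]
(Theorem~\ref{th3}), and then invokes an earlier result of the author (proved by Lagrange interpolation in~\cite{La1}) to obtain the moment identity of Theorem~\ref{th4}, which is equivalent to the statement. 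Your route is more elementary and fully self-contained, avoiding the external citation; the paper's route, on the other hand, yields Theorem~\ref{th3} as a by-product, and this closed form is what is then used to show (Theorem~\ref{th5}) that the moments and free cumulants are shifted symmetric polynomials in $\la$ --- a fact indispensable for the algorithm in Sections~8--9. The bookkeeping obstacle you anticipate (matching the cancellations in $c_i(\la)$ from equal parts $\la_j=\la_{j+1}$ with Kerov's convention $x_i=y_{i-1}$) is exactly the right one, and your proposal to group rows by distinct part values is the clean way to carry it out.
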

 
In particular $\sigma_1(\al)=c_{l(\la)+1}(\la)$ and $\sigma_d(\al)=c_1(\la)$. In this section we shall give another proof of this result.

For any partition $\la$, we have
the following natural generalization of the ``rising factorial'' in terms of $\al$-contents
\[(x)_{\la}=\prod_{(i,j) \in \la} \left(x+j-1-(i-1)/\alpha\right).\]
Given two indeterminates $x,y$ the quantity
\[\frac{(x+y+1)_{\la}}{(x+y)_{\la}}\, \frac{(x)_{\la}}{(x+1)_{\la}}\]
has been studied in~\cite[Section 5, p. 3463]{La1}. Its development in descending powers of $x$ has been given there (see~\cite{La3} for an important application).

It turns out that the moment generating series of the transition measure is a function of this type.

\begin{theo}~\label{th3}
We have
\begin{equation*}
\mathcal{M}_\la(z)= z^{-1} \,\frac{(-z-1/\al+1)_{\la}}{(-z-1/\al)_{\la}}\, \frac{(-z)_{\la}}{(-z+1)_{\la}}.
\end{equation*}
\end{theo}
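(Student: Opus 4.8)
The plan is to show that both sides of the claimed identity have the same zeros, poles, and behaviour at infinity, and therefore coincide as rational functions of $z$. Recall from Section 5 that
\[
\mathcal{M}_\la(z)=\frac{1}{z-x_d(\al)}\prod_{i=1}^{d-1}\frac{z-y_i(\al)}{z-x_i(\al)},
\]
with $d=\la_1+1$ and, for $1\le k\le\la_1$, $x_k(\al)=k-1-\la'_k/\al$ and $y_k(\al)=k-\la'_k/\al$, subject to the convention that a pair $(x_i,y_{i-1})$ with $x_i=y_{i-1}$ is deleted. So the left side is a rational function whose numerator vanishes exactly at the $y_k(\al)$ and whose denominator vanishes exactly at the $x_k(\al)$, each with multiplicity one after cancellation. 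The first step is therefore to compute the zeros and poles of the right side, i.e. of
\[
F(z)=z^{-1}\,\frac{(-z-1/\al+1)_{\la}}{(-z-1/\al)_{\la}}\,\frac{(-z)_{\la}}{(-z+1)_{\la}},
\]
and match them against those of $\mathcal{M}_\la(z)$.

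The second step is the bookkeeping of these zeros and poles in terms of $\al$-contents. Writing $(x)_\la=\prod_{(i,j)\in\la}(x+j-1-(i-1)/\al)$, the factor $(-z)_\la/(-z+1)_\la$ contributes, for each box $(i,j)$, a zero at $z=j-1-(i-1)/\al$ and a pole at $z=j-(i-1)/\al$; summing over $j=1,\dots,\la_i$ in row $i$ telescopes to a single zero at $z=-(i-1)/\al$ and a single pole at $z=\la_i-(i-1)/\al$, \emph{provided} $z=\la_i-(i-1)/\al$ does not coincide with a zero coming from a longer lower row. Similarly $(-z-1/\al+1)_\la/(-z-1/\al)_\la$ telescopes in each row to a zero at $z=\la_i-i/\al$ and a pole at $z=-i/\al$. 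The $z^{-1}$ factor supplies one more pole at $z=0$. I would organize this telescoping by columns rather than rows: grouping the contributions of all boxes in column $j$ (there are $\la'_j$ of them, in rows $1,\dots,\la'_j$) produces exactly the factors $(z-y_j(\al))/(z-x_j(\al))$ of $\mathcal{M}_\la$, while the leftover pole at $z=0$ and the pole/zero at $z=\la_1-\cdots$ assemble into the missing $1/(z-x_d(\al))$ and the endpoint terms. One must check that the deletion convention $x_i=y_{i-1}$ of Section 5 corresponds precisely to the cancellation of a zero against a pole in $F(z)$ when $\la'_j=\la'_{j+1}$, which is a straightforward verification.

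The third step is the normalization: having matched all zeros and poles, the two sides differ by a multiplicative constant, and comparing the leading term as $z\to\infty$ — both behave like $z^{-1}$, since $(-z-a)_\la/(-z-b)_\la\to 1$ and $z^{-1}(-z)_\la/(-z+1)_\la\to z^{-1}$ — fixes that constant to be $1$. An equally clean alternative, which avoids the endpoint subtleties, is to take logarithmic derivatives: $\frac{d}{dz}\log F(z)$ is, by the product formula, a sum of simple fractions $\tfrac{1}{z-(\text{content})}$ with signs, and the residues recover the multiset $\{x_k(\al)\}\cup\{y_k(\al)\}$ directly; then reconstruct $F$ by exponentiating and integrating, matching the constant at infinity as before. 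I expect the main obstacle to be purely combinatorial rather than analytic: carefully tracking which telescoped row-endpoints survive versus get absorbed into the zeros of adjacent rows, and confirming that this is exactly governed by the $x_i=y_{i-1}$ deletion convention and by the relation $m_i(\la')=\la_i-\la_{i+1}$. Once that column-by-column regrouping is set up correctly, the identity falls out immediately. (One may also note that $F$ is, up to the elementary factor $z^{-1}$, precisely the ratio $\tfrac{(x+y+1)_\la}{(x+y)_\la}\tfrac{(x)_\la}{(x+1)_\la}$ studied in~\cite[Section 5]{La1} specialized at $x=-z$, $y=-1/\al$, so the descending-power expansion there gives an alternative route to the moments $M_k(\la)$ directly.)
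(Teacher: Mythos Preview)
Your plan is correct and matches the paper's approach in substance: both rest on the same two-stage telescoping. The paper does it as a direct product identity rather than as a zeros/poles/normalization argument: it first simplifies each ratio $(u+1)_\la/(u)_\la$ row by row to get
\[
z\,\mathcal{N}_\la(z)=\prod_{i=1}^{l(\la)}\frac{z-\la_i+i/\al}{z+i/\al}\cdot\frac{z+(i-1)/\al}{z-\la_i+(i-1)/\al},
\]
which is exactly your row telescoping; then it regroups the rows having a common value $\la_i=r$ (equivalently, indexes by columns via $\la'_r$) and telescopes again to recover $\prod_r (z-y_r(\al))/(z-x_r(\al))$ together with the endpoint factor $z/(z-\la_1)$. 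So your ``organize by columns'' step is precisely what the paper does, and the deletion convention $x_i=y_{i-1}$ corresponds exactly to the vanishing multiplicity $m_r(\la')=\la_r-\la_{r+1}=0$, as you anticipated. The only difference is cosmetic: because the paper manipulates the product directly, no separate normalization-at-infinity step is needed, and the logarithmic-derivative detour is unnecessary.
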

\begin{proof}
Denote $\mathcal{N}_\la(z)$ the right-hand side. In view of (5.1) and the convention made there, it is enough to prove
\[\mathcal{N}_\la(z)=\frac{1}{z-x_d(\al)}\prod_{k=1}^{d-1}\frac{z-y_k(\al)}{z-x_k(\al)},\]
with $d=\la_1+1$, $x_d(\al)=\la_1$ and for $1\le k \le \la_1$,
\[x_k(\al)=k-1-\la^\prime_k/\al,\quad \quad
y_k(\al)=k-\la^\prime_k/\al.\]
We have clearly
\[\frac {(u+1)_\la}{(u)_\la}=\prod_{i=1}^{l(\la)}
\prod_{j=1}^{\la_i} \frac{u+j-(i-1)/\al}{u+j-1-(i-1)/\al}
=\prod_{i=1}^{l(\la)} \frac{u+\la_i-(i-1)/\al}{u-(i-1)/\al},\]
which yields
\[z\, \mathcal{N}_\la(z)=
\prod_{i=1}^{l(\la)} \frac{z-\la_i+i/\al}{z+i/\al}\,
\frac{z+(i-1)/\al}{z-\la_i+(i-1)/\al}.\]
On the right-hand side consider the product corresponding to a part $\la_i=r$. Since $\la^\prime_k=\sum_{i\ge k}m_i(\la)$, it writes
\[ \prod_{i=\la^\prime_{r+1}+1}^{\la^\prime_r} \frac{z-r+i/\al}{z+i/\al}\,
\frac{z+(i-1)/\al}{z-r+(i-1)/\al}=
\frac{z-r+\la^\prime_r/\al}{z-r+\la^\prime_{r+1}/\al}\, \frac{z+\la^\prime_{r+1}/\al}{z+\la^\prime_r/\al}.\]
Bringing all contributions together we obtain
\[z\, \mathcal{N}_\la(z)=\frac{z}{z-\la_1}\prod_{r=1}^{\la_1}
\frac{z-r+\la^\prime_r/\al}{z-r+1+\la^\prime_r/\al}.\]
\end{proof}

Equivalently if we write
\[\frac{(-z-1/\al+1)_{\la}}{(-z-1/\al)_{\la}}\, \frac{(-z)_{\la}}{(-z+1)_{\la}}=\sum_{k\ge 0} s_k(\la) z^{-k},\]
we have $M_k(\la)=s_k(\la)$. Thus we may apply the results of~\cite{La1}, which have two important consequences. Firstly we recover Kerov's result.
\begin{theo}~\label{th4}
For any integer $k\ge 0$ we have
\begin{equation*}
M_k(\la)= \sum_{i=1}^{l(\la)+1} 
c_i(\la) \,\big(\la_i-(i-1)/\al\big)^k.
\end{equation*}
Equivalently
\[\mathcal{M}_\la(z)=\sum_{i=1}^{l(\la)+1} 
 \frac{c_i(\la)}{z-\la_i+(i-1)/\al}.\]
\end{theo}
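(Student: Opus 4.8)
The plan is to deduce Theorem~\ref{th4} directly from Theorem~\ref{th3}, by combining it with the general expansion result from~\cite{La1} for the rational function
\[
\frac{(x+y+1)_{\la}}{(x+y)_{\la}}\,\frac{(x)_{\la}}{(x+1)_{\la}}
\]
in descending powers of $x$. Theorem~\ref{th3} identifies $z\,\mathcal{M}_\la(z)$ with exactly such an expression, under the substitution $x=-z$, $y=-1/\al$, so the coefficients $s_k(\la)$ recorded just after the proof of Theorem~\ref{th3} are the specializations of the coefficients computed in~\cite[Section~5]{La1}. Since we have established $M_k(\la)=s_k(\la)$, it suffices to show that the formula from~\cite{La1}, once specialized, reads
\[
s_k(\la)=\sum_{i=1}^{l(\la)+1} c_i(\la)\,\big(\la_i-(i-1)/\al\big)^k .
\]

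A cleaner route, which I would actually carry out, is to bypass the bookkeeping of~\cite{La1} and argue from the partial fraction decomposition furnished by Theorem~\ref{th2}'s companion identity already built into Theorem~\ref{th3}. Namely, the proof of Theorem~\ref{th3} rewrites
\[
z\,\mathcal{M}_\la(z)=\frac{z}{z-\la_1}\prod_{r=1}^{\la_1}\frac{z-r+\la^\prime_r/\al}{z-r+1+\la^\prime_r/\al},
\]
which (after restoring the omitted-corner convention of~(5.1)) is the product over the interlacing corner data, i.e.
\[
\mathcal{M}_\la(z)=\frac{1}{z-x_d(\al)}\prod_{k=1}^{d-1}\frac{z-y_k(\al)}{z-x_k(\al)} .
\]
This is a rational function of $z$ that vanishes like $z^{-1}$ at infinity, with only simple poles located at the points $x_k(\al)=\la_{?}-(?-1)/\al$ once re-indexed via the outer-corner/inner-corner correspondence. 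Performing the partial fraction expansion,
\[
\mathcal{M}_\la(z)=\sum_{k=1}^{d}\frac{\sigma_k(\al)}{z-x_k(\al)},
\qquad
\sigma_k(\al)=\prod_{i\neq k}\frac{x_k(\al)-y_{?}(\al)}{x_k(\al)-x_i(\al)} ,
\]
and then appealing to Theorem~\ref{th2} (which identifies the nonzero $\sigma_k(\al)$ with the Pieri coefficients $c_i(\la)$ and the support points $x_k(\al)$ with $\la_i-(i-1)/\al$), we obtain
\[
\mathcal{M}_\la(z)=\sum_{i=1}^{l(\la)+1}\frac{c_i(\la)}{z-\la_i+(i-1)/\al}.
\]
Expanding each summand as a geometric series in $1/z$ and reading off the coefficient of $z^{-k-1}$ then yields the moment formula $M_k(\la)=\sum_i c_i(\la)\big(\la_i-(i-1)/\al\big)^k$, which is the assertion.

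There is a subtlety about logical dependence: the section announces that this gives ``another proof'' of Kerov's Theorem~\ref{th2}, so I should not invoke Theorem~\ref{th2} to prove Theorem~\ref{th4}. The self-contained argument is therefore: (i) from Theorem~\ref{th3} obtain the corner-product form of $\mathcal{M}_\la(z)$; (ii) since the $y_k(\al)$ strictly interlace the $x_k(\al)$, this is a simple-pole rational function, and its partial fraction coefficients $\sigma_k(\al)$ are positive and sum to $1$ (the normalization follows from the $z\to\infty$ behavior); (iii) separately, expanding the Pieri formula~(3.3) at $(1,\ldots,1)$ and using the explicit $c_i(\la)$ shows $\sum_i c_i(\la)=1$ and, more importantly, that $\sum_i c_i(\la)(z-\la_i+(i-1)/\al)^{-1}$ has the same poles, residues, and decay as $\mathcal{M}_\la(z)$ — which forces equality of the two rational functions, hence equality of the probability weights; (iv) extract coefficients of $z^{-k-1}$. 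The main obstacle is step~(iii): matching the residues $c_i(\la)$ against the $\sigma_k(\al)$ requires the combinatorial identification of inside corners of $Y_\la$ with the rows $i$ where a box can be added, together with the cancellation of the ``fake'' factors at rows where $\la_i=\la_{i-1}$ (equivalently, the $x_i=y_{i-1}$ convention of~(5.1)); this is precisely the content of Kerov's lemma and is where the real work lies. Once that matching is in place, everything else is a one-line geometric-series expansion.
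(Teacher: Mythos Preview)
Your first paragraph is essentially the paper's approach: reduce to the identity $M_k(\la)=s_k(\la)$ furnished by Theorem~\ref{th3}, and then quote the result of~\cite{La1} computing $s_k(\la)$. One correction: the relevant input is not \cite[Section~5]{La1} (which gives the shifted-symmetric expansion of $s_k$ used later in the proof of Theorem~\ref{th5}) but \cite[Theorem~8.1]{La1}, proved there by Lagrange interpolation applied directly to the rational function of Theorem~\ref{th3}. That result already yields
\[
s_k(\la)=\sum_{i=1}^{l(\la)+1} c_i(\la)\,\big(\la_i-(i-1)/\al\big)^k
\]
with no appeal to Kerov's identification of the transition weights, which is exactly why Theorem~\ref{th4} constitutes an \emph{independent} proof of Theorem~\ref{th2}.

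Your ``cleaner route'', however, is not a proof as written. You correctly diagnose the circularity: invoking Theorem~\ref{th2} to match $\sigma_k(\al)$ with $c_i(\la)$ would make the argument vacuous, since the paper's stated purpose is to recover Theorem~\ref{th2} from Theorem~\ref{th4}. Your proposed self-contained version then pushes all the content into step~(iii), where you need to show that the partial-fraction residues of $\mathcal{M}_\la(z)$ coincide with the Pieri coefficients $c_i(\la)$; you yourself say ``this is precisely the content of Kerov's lemma and is where the real work lies'', and you do not carry it out. So route~2 either assumes what is to be proved or leaves the main step open. Stick with route~1, cite \cite[Theorem~8.1]{La1}, and note that its Lagrange-interpolation argument is what does the residue matching for you.
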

\begin{proof} Using ``Lagrange interpolation''~\cite[Section 7.8]{Las}, the first statement has been proved for $s_k(\la)$ in~\cite[Theorem 8.1, p. 3470]{La1}.
\end{proof}

Secondly $M_k(\la)$, originally defined as some \textit{rational} function of $\la$, is actually a shifted symmetric \textit{polynomial}. 

\begin{theo}~\label{th5}
The moments $\{M_k(\la), k\ge 2\}$, boolean cumulants $\{B_k(\la), k\ge 2\}$ and free cumulants $\{R_k(\la), k\ge 2\}$ are three algebraic bases of the shifted symmetric algebra $\mathcal{S}^{\ast}$. They are polynomials in $1/\al$.
\end{theo}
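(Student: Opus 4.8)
The plan is to establish three things simultaneously: that $M_k$, $B_k$, $R_k$ are shifted symmetric functions of $\la$; that each generates $\mathcal{S}^{\ast}$ as an algebra; and that each is a polynomial in $1/\al$ (not merely a rational function). I will take the moments $M_k(\la)$ as the primary objects, since Theorem~\ref{th3} has just given a closed product formula for their generating series, and then transfer everything to $B_k$ and $R_k$ by the explicit relations of Section~5.

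\emph{Shifted symmetry and polynomiality of the $M_k$.} The cleanest route is through Theorem~\ref{th4}, which tells us $M_k(\la)=\sum_i c_i(\la)(\la_i-(i-1)/\al)^k$. But rather than argue directly from the Pieri coefficients $c_i(\la)$ (which are only rational), I would instead invoke the computation of~\cite[Section~5, p.~3463 and Theorem~8.1, p.~3470]{La1}: there the development of the function $(-z-1/\al+1)_\la/(-z-1/\al)_\la \cdot (-z)_\la/(-z+1)_\la$ in descending powers of $z$ is shown to have coefficients $s_k(\la)$ which are shifted symmetric \emph{polynomials} of $\la$, manifestly polynomial in $1/\al$ since every factor $(x)_\la$ is a product of linear forms $x+j-1-(i-1)/\al$ with coefficients in $\mathbf{Z}[1/\al]$. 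By Theorem~\ref{th3}, $M_k(\la)=s_k(\la)$, so the moments inherit both properties at once. Alternatively one can use the representation via the alphabets: $M_k(\la)=h_k(A_\la)$ where $A_\la=I_\la-O_\la$ has entries $x_i(\al),y_j(\al)$ that are $\al$-contents; but the subtlety of the $x_i=y_{i-1}$ cancellation convention makes the $s_k(\la)$ route more transparent.

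\emph{Transfer to $B_k$ and $R_k$.} The four families of relations (5.2)--(5.5) express each of $B_n$, $M_n$, $R_n$ as explicit $\mathbf{Z}[1/\al]$-linear — in fact integer — combinations of products of the others, with triangular leading term $M_n = B_n + \cdots = R_n + \cdots$ (lower-weight products only). Concretely, from $(n-1)B_n(\la)=\sum_{|\mu|=n}\binom{n-1}{l(\mu)}u_\mu R_\mu(\la)$ and $(n+1)M_n(\la)=\sum_{|\mu|=n}\binom{n+1}{l(\mu)}u_\mu R_\mu(\la)$, together with $M_n=\sum_{|\mu|=n}u_\mu B_\mu(\la)$, one passes back and forth: induction on $n$ shows that if the $M_k$ with $k\le n$ are shifted symmetric polynomials in $1/\al$, then so are $B_n$ and $R_n$, since the correction terms involve only $M_\mu,B_\mu,R_\mu$ with all parts $<n$. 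The same triangular structure — each family's degree-$n$ element equals the degree-$n$ element of any other family plus a polynomial in strictly lower-degree elements — shows that each of the three families algebraically generates the same subalgebra of $\mathcal{S}^{\ast}$, and that subalgebra is all of $\mathcal{S}^{\ast}$ because it is already generated by $\{p_k(C_\la),k\ge 1\}$ together with $p_0(C_\la)=|\la|$ (Section~2.2), and one checks $M_k$ is a polynomial in the $p_j(C_\la)$ with $M_k = p_k(C_\la) + (\text{lower})$ — indeed $M_k(\la)=h_k(A_\la)$ while the $\al$-contents in $C_\la$ and the corner $\al$-contents in $A_\la$ are related by the standard "boxes versus corners" identity, giving $\sum_j p_j(C_\la)$-type expansions.

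\emph{Main obstacle.} The genuinely delicate point is algebraic independence — showing each family is an \emph{algebraic basis}, i.e. the $\{M_k,k\ge 2\}$ are algebraically independent over $\mathbf{Q}(\al)$, not merely generators. For this I would compare Krull dimensions, or more concretely exhibit, for each $n$, a partition (or one-parameter family of partitions) on which the Jacobian of $(M_2,\ldots,M_n)$ with respect to natural coordinates is nonzero; the cleanest device is the leading-term triangularity $M_k = p_k(C_\la)+(\text{polynomial in }p_j(C_\la),\,j<k)$ combined with the known algebraic independence of the $p_k(C_\la)$ from Section~2.2 (which itself rests on~\cite[Lemma~7.1]{La1}). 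Because that triangular change of generators is invertible over $\mathbf{Q}(\al)$, algebraic independence transfers; and the identical argument applied through (5.2)--(5.5) gives it for the $B_k$ and $R_k$. Everything else is bookkeeping with the binomial identities of Section~4 specialized at $A=A_\la$.
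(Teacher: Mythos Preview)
Your approach is essentially the paper's: reduce to the moments via the triangular relations (5.2)--(5.4), then invoke~\cite{La1} (the paper cites Corollary~5.2 there for an explicit expansion of $s_k(\la)$ in the basis $p_j(C_\la)$; you cite Section~5 and Theorem~8.1 of the same reference, but it is the same input) to conclude that $M_k(\la)=s_k(\la)$ is shifted symmetric and polynomial in $1/\al$. You are more explicit than the paper about the algebraic-independence half of ``algebraic basis'', which the paper's two-line proof leaves entirely implicit in the cited formula.

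One small correction worth flagging: your claimed triangularity $M_k=p_k(C_\la)+(\text{lower})$ has the wrong indexing. It is $M_k$ that is triangular against $p_{k-2}(C_\la)$ --- check $M_2=|\la|/\al$ against $p_0(C_\la)=|\la|$, and $M_3=R_3=(2/\al)p_1(C_\la)+(1/\al-1/\al^2)|\la|$ against $p_1(C_\la)$. The argument for algebraic independence goes through unchanged once this shift is corrected, since the leading coefficient is still a nonzero element of $\mathbf{Q}(\al)$.
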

\begin{proof}
Due to (5.2)-(5.4), it is enough to prove both statements for the moments. But they follow from~\cite[Corollary 5.2, p. 3464]{La1} where we have proved
\[s_k(\la)=\sum_{\begin{subarray}{c}p,q,r \ge 0 \\ p+q+2r \le k\end{subarray}} 
\frac{1}{\al^r} \Big(1-\frac{1}{\al}\Big)^p\binom{p+q+r-1}{p}
\,\sum_{s=0}^{\mathrm{min}(r,k-2r-p)}
\binom{|\la|+r-1}{r-s} F_{k-2r-p,q,s}(\la).\]
Here $F_{p,q,r}(\la)$ is a shifted symmetric function explicitly known in terms of the basis (2.1) (it is also a polynomial in $1/\al$).
\end{proof}

\section{An example}

Our method for the computation of $\vartheta^\la_\mu$ will be best understood through its simplest example $\mu=(2)$.
\begin{itemize}
\item[(i)] Fixing the constant $\vartheta^\la_{0}=\theta^\la_{1^{|\la|}}=1$ and using $|\la|=\al R_2(\la)$, equations (3.6) become
\begin{equation*}
\begin{split}
\sum_{i=1}^{l(\la)+1} 
c_i(\la) \, \vartheta^{\la^{(i)}}_2&=\vartheta^\la_2,\\
\sum_{i=1}^{l(\la)+1} 
c_i(\la) \, \,\big(\la_i-(i-1)/\al\big)\,\vartheta^{\la^{(i)}}_2&=2 \al R_2(\la).
\end{split}
\end{equation*}
The solution being unique, we search for it in the form \[\vartheta^\la_2=AR_4(\la)+BR_2^2(\la)+CR_3(\la)+DR_2(\la).\] 

\item[(ii)] We compute the quantities $R_k(\la^{(i)})$ in terms of $R_k(\la)$. Here we assume (this will be proved in Theorem~\ref{th6} below) that
\begin{equation*}
\begin{split}
\al^3R_4(\la^{(i)})&=\al^3R_4(\la)+3\al^2x_i^2-3\al\beta x_i-3\al^2R_2(\la)+\beta^2-\al,\\
\al^2R_3(\la^{(i)})&=\al^2R_3(\la)+2\al x_i-\beta,\\
\al R_2(\la^{(i)})&=\al R_2(\la)+1,
\end{split}
\end{equation*}
with $\beta=1-\al$ and $x_i=\la_i-(i-1)/\al$. 

\item[(iii)] We insert these values in the system (3.6) and apply Theorem~\ref{th4}. We obtain
\begin{equation*}
\begin{split}
&3\al^2AM_2(\la)-3\al^2AR_2(\la)+A(\beta^2-\al)+2\al^2BR_2(\la)+\al B-\al\beta C+\al^2D=0,\\
&3\al^2AM_3(\la)+(-3\al\beta A+2\al^2C)M_2(\la)=2\al^4R_2(\la).
\end{split}
\end{equation*}

\item[(iv)] We express the moments in terms of the free cumulants by (5.3). Here we have only $M_2(\la)=R_2(\la)$ and $M_3(\la)=R_3(\la)$.

\item[(v)] We apply Theorem~\ref{th5} and identify the polynomials in the free cumulants on both sides. This yields
\begin{equation*}
\begin{split}
B=0,&\quad A(\beta^2-\al)-\al\beta C+\al^2D=0,\\
A=0,&\quad 2\al^2C=2\al^4,
\end{split}
\end{equation*}
that is $A=B=0, C=\al^2, D=\al\beta$. Hence $\vartheta^\la_2=\al^2 R_3(\la)+\al\beta R_2(\la)$.
\end{itemize}

Observe that after (ii) we never used the fact that $\al+\beta=1$. For steps (iii) to (v), $\al$ and $\beta$ might be independent parameters. 

\section{Adding a node}

As shown above, we must first obtain an expression for $R_n(\la^{(i)})$, when adding a node on the row $i$ of the diagram of $\la$. Since we have
\[(1-n)R_n(\la)=h_n((1-n)A_\la),\]
this amounts to compare the alphabet $A_{\la^{(i)}}$ with $A_\la$. From now on we denote $\beta=1-\al$ and $x_i=\la_i-(i-1)/\al$. 

\begin{prop}~\label{pr1}
Given a partition $\la$ and an integer $1\le i \le l(\la)+1$, one has
\[A_{\la^{(i)}}=A_\la+B(x_i),\]
with $B(v)$ the alphabet $\{v+1,v-1/\al\}-\{v,v+1-1/\al\}$.
In other words
\[H_{1/z}(A_{\la^{(i)}})=H_{1/z}(A_\la)\,
\frac{-z+x_i+1-1/\al}{-z+x_i-1/\al} \,\frac{-z+x_i}{-z+x_i+1}.\]
\end{prop}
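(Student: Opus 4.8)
The plan is to track what happens to the inside and outside corners of the anisotropic diagram when a single node is added on row $i$. Recall that $A_\la=I_\la-O_\la$ where $I_\la$ is the alphabet of $\al$-contents of inside corners and $O_\la$ that of outside corners. Adding a node on row $i$ changes the staircase boundary of $Y_\la(\al)$ only locally, near the corner associated with row $i$. The key observation is the following: going from $\la$ to $\la^{(i)}$, the outside corner at $\al$-content $\la_i-(i-1)/\al=x_i$ (the NE corner of the box being appended, in the old diagram) is consumed, while a new outside corner appears at $\al$-content $x_i+1$ (to the right of the new box) and the inside corners adjacent to row $i$ shift: the old inside corner just ``below-left'' of the new box moves from $\al$-content $x_i-1/\al$ to $x_i$... wait, more precisely one new inside corner at $\al$-content $x_i+1-1/\al$ is created and the old inside corner at $\al$-content $x_i-1/\al$ remains but the corner structure forces the multiset change. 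The cleanest way to make this precise is to use the product formula $\mathcal{M}_\la(z)=z^{-1}H_{1/z}(A_\la)$ together with Theorem~\ref{th3}, which expresses $\mathcal{M}_\la(z)$ as a ratio of shifted rising factorials $(x)_\la$; this bypasses any case analysis of corners.

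Concretely, I would compute directly. Since $H_{1/z}(A_\la)=z\,\mathcal{M}_\la(z)$ and by Theorem~\ref{th3}
\[
z\,\mathcal{M}_\la(z)=\frac{(-z-1/\al+1)_\la}{(-z-1/\al)_\la}\,\frac{(-z)_\la}{(-z+1)_\la},
\]
it suffices to compute the ratio $H_{1/z}(A_{\la^{(i)}})/H_{1/z}(A_\la)$ as the corresponding ratio of four rising-factorial quotients. For any $x$ and any partition $\la$, passing from $\la$ to $\la^{(i)}$ multiplies $(x)_\la$ by the single factor coming from the new node $(i,\la_i+1)$, namely $x+\la_i-(i-1)/\al=x+x_i+(i-1)/\al-(i-1)/\al$; being careful, the new node is at row $i$, column $\la_i+1$, so its contribution to $(x)_\la$ is $x+(\la_i+1)-1-(i-1)/\al=x+\la_i-(i-1)/\al=x+x_i$. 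Therefore
\[
\frac{(x)_{\la^{(i)}}}{(x)_\la}=x+x_i .
\]
Applying this with $x=-z-1/\al+1,\ -z-1/\al,\ -z,\ -z+1$ respectively gives
\[
\frac{H_{1/z}(A_{\la^{(i)}})}{H_{1/z}(A_\la)}
=\frac{(-z-1/\al+1+x_i)}{(-z-1/\al+x_i)}\cdot\frac{(-z+x_i)}{(-z+1+x_i)},
\]
which is exactly the claimed identity, and the factorization $\{v+1,v-1/\al\}-\{v,v+1-1/\al\}$ at $v=x_i$ is just the statement that this rational function equals $H_{1/z}(B(x_i))$ with $H_t(\{a\})=1/(1-ta)$ and $H_t(A-B)=H_t(A)/H_t(B)$.

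The main obstacle — really the only point requiring care — is the convention near coinciding corners: Theorem~\ref{th3} and formula (5.1) carry the proviso that $x_i(\al)$ and $y_{i-1}(\al)$ are omitted when they coincide, so one must check that the ratio of rising factorials is still the correct generating function $H_{1/z}(A_{\la^{(i)}})$ when adding the node creates or destroys such a coincidence (for instance when row $i$ had length equal to row $i-1$, or when it becomes a fresh row $i=l(\la)+1$). This is handled by noting that the rising-factorial expression in Theorem~\ref{th3} is manifestly a rational function of $z$ that is \emph{valid for all $\la$ without reference to the convention}, since cancellations between numerator and denominator are automatic; hence the ratio computation above is legitimate verbatim, and the boundary cases need no separate treatment. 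Once Proposition~\ref{pr1} is in hand, applying $(1-n)R_n(\la)=h_n((1-n)A_\la)$ together with the sum-of-alphabets rule (4.1) will yield the explicit expansion of $R_n(\la^{(i)})$ used in Section 7.
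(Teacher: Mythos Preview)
Your argument is correct and is essentially the paper's own proof: invoke Theorem~\ref{th3} to write $H_{1/z}(A_\la)$ as a ratio of generalized rising factorials, then use $(x)_{\la^{(i)}}=(x)_\la\,(x+x_i)$ to read off the four linear factors. The surrounding discussion of corners and of the omission convention is sound but inessential, since---as you yourself note---the rising-factorial expression is valid uniformly.
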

\begin{proof}
By Theorem~\ref{th3} we have
\[H_{1/z}(A_\la)=\frac{(-z-1/\al+1)_{\la}}{(-z-1/\al)_{\la}}\, \frac{(-z)_{\la}}{(-z+1)_{\la}}.\]
But $(u)_{\la^{(i)}}=(u)_\la (u+x_i)$.
\end{proof}

\begin{prop}~\label{pr2}
For any $n\ge 1$ and any real number $u$ one has
\[h_n(uB(v))=\sum_{\begin{subarray}{c}r\ge 1, s\ge 0\\ 2r+s\le n
\end{subarray}} v^{n-2r-s}\binom{n-1}{2r+s-1}
\binom{r+s-1}{s}\binom{-u}{r}(-1/\al)^{r+s}\beta^s.\]
\end{prop}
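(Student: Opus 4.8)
The plan is to compute $h_n(uB(v))$ directly from the definition of $B(v)$ as a difference of two-element alphabets, using the Cauchy-type formulas from Section~4. Writing $B(v)=P(v)-Q(v)$ with $P(v)=\{v+1,v-1/\al\}$ and $Q(v)=\{v,v+1-1/\al\}$, multiplication of alphabets gives
\[
H_t(uB(v))=\frac{H_t(P(v))^u}{H_t(Q(v))^u}
=\Big(\frac{1}{(1-t(v+1))(1-t(v-1/\al))}\Big)^u
\Big((1-tv)(1-t(v+1-1/\al))\Big)^u.
\]
First I would shift all four linear factors by $v$: with $t$ fixed, each factor $1-t(v+c)$ equals $(1-tv)\big(1-\frac{tc}{1-tv}\big)$. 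Setting $s=t/(1-tv)$, the prefactors $(1-tv)^{\pm u}$ cancel between numerator and denominator (there are two of each), leaving
\[
H_t(uB(v))=\Big(\frac{(1-s\cdot 0)(1-s(1-1/\al))}{(1-s)(1-s(-1/\al))}\Big)^u
=\Big(\frac{1-s\beta/1}{1}\cdot\frac{1-s(1-1/\al)}{(1-s)(1+s/\al)}\Big)^{\!u},
\]
where $1-1/\al = -(1/\al)(1-\al)\cdot(-1)\cdots$ — more cleanly, $1-1/\al$ and the two denominator factors are all of the form $1-s\cdot(\text{something in }1/\al,\beta)$. So $H_t(uB(v))$ becomes a pure power series in $s$ (no $v$), and the dependence on $v$ re-enters only through the substitution $s=t/(1-tv)$, i.e. through $s^m=t^m\sum_{j\ge 0}\binom{m+j-1}{j}(tv)^j$. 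Extracting $[t^n]$ then produces the claimed factor $v^{n-2r-s}\binom{n-1}{2r+s-1}$, since if the ``intrinsic'' part contributes $s^{2r+s}$ then $[t^n]$ of $t^{2r+s}(1-tv)^{-(2r+s)}$ equals $v^{n-2r-s}\binom{n-1}{2r+s-1}$.

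The remaining task is to identify the intrinsic coefficient of $s^{2r+s}$ in $\big(F(s)\big)^u$ where $F(s)$ is the rational function in $s$ obtained above; I expect $F(s)=\dfrac{1-s(1-1/\al)}{(1-s)(1+s/\al)}$ after the cancellation, and one checks $F(s)=1+$ (terms of order $\ge 2$ in $s$) because the order-$s$ coefficients cancel: $-(1-1/\al)-(-1)-(1/\al)\cdot(-1)\cdot(-1)$... this vanishing is exactly what forces the lower summation bound $2r+s\le n$ with $r\ge 1$ rather than $r\ge 0$, and it is the key structural fact. Then I would write $\log F(s)$, or better expand $F(s)$ as a product $(1-s(1-1/\al))(1-s)^{-1}(1+s/\al)^{-1}$ and collect, to see that the coefficient of $s^k$ in $F(s)^u$ organizes into the double sum $\sum_{2r+s=k}\binom{r+s-1}{s}\binom{-u}{r}(-1/\al)^{r+s}\beta^s$ — here $r$ counts a factor $(-1/\al)^r\binom{-u}{r}$ coming from expanding $(1+s/\al)^{-u}$ paired with $(1-s)^{-u}$ into $\binom{-u}{r}(s(-1/\al))^r\cdot$(something), and $s$ (the index, clashing with the variable name — the paper reuses $s$) counts powers of $\beta$ from the factor $1-s\beta$... wait, $1-1/\al = \beta/\al\cdot(-1)$? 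No: $1-1/\al=(\\al-1)/\al=-\beta/\al$, so the numerator is $1+s\beta/\al$, which explains why the final answer carries $(-1/\al)^{r+s}\beta^s$ together: the $\beta^s$ always comes accompanied by an extra $(-1/\al)^s$.

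The main obstacle is the combinatorial bookkeeping in the last step: showing that the coefficient of $s^k$ in $\big((1+s\beta/\al)(1-s)^{-1}(1+s/\al)^{-1}\big)^u$ equals precisely $\sum_{2r+s=k,\,r\ge1}\binom{n-1}{\cdot}$-free part $\binom{r+s-1}{s}\binom{-u}{r}(-1/\al)^{r+s}\beta^s$. I would handle this by a clean Cauchy product: $(1+s\beta/\al)^{u}$ contributes $\binom{u}{a}(\beta/\al)^a s^a$, and $\big((1-s)(1+s/\al)\big)^{-u}$ is a product of two negative-binomial series whose $s^b$-coefficient is $\sum_{b_1+b_2=b}\binom{-u}{b_2}(1/\al)^{b_2}$ — and the inner sum $\sum\binom{b_1+u-1}{b_1}\binom{-u}{b_2}(1/\al)^{b_2}$ should telescope/simplify (Vandermonde-type) to something proportional to $\binom{r+s-1}{s}\binom{-u}{r}$. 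Rather than push this through in full generality I would verify it matches the stated formula, possibly noting it also follows from specializing one of the identities (4.14)–(4.15) already established, or simply remark that it is a routine consequence of the binomial theorem for $(1+x)^u$ applied four times. The parity/degree constraint $2r+s\le n$ and $\binom{n-1}{2r+s-1}=0$ for $2r+s>n$ then closes the argument.
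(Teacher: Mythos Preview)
Your overall strategy matches the paper's: set $y=t/(1-tv)$ (equivalently $y=1/(z-v)$ when $t=1/z$), observe that the $(1-tv)$ prefactors cancel, and reduce $H_t(uB(v))$ to a power series in $y$ alone, so that the dependence on $v$ comes back only through $y^{m}=\sum_{j\ge 0}\binom{m+j-1}{j}v^{j}t^{m+j}$, which produces exactly the factor $v^{\,n-2r-s}\binom{n-1}{2r+s-1}$.

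The genuine gap is in the ``intrinsic'' step. You arrive at
\[
H_t(uB(v))=\left(\frac{1+y\beta/\al}{(1-y)(1+y/\al)}\right)^{u}
\]
and then try to expand the three factors separately, hoping that a Vandermonde-type identity will collapse the triple sum into the claimed $\binom{r+s-1}{s}\binom{-u}{r}(-1/\al)^{r+s}\beta^{s}$. You do not carry this out, and in fact it is not the natural route. The key observation you are missing is a purely algebraic simplification that uses $\beta=1-\al$: since
\[
(1-y)(1+y/\al)=1-y(1-1/\al)-y^{2}/\al=1+y\beta/\al-y^{2}/\al,
\]
the quotient collapses to
\[
\frac{1+y\beta/\al}{(1-y)(1+y/\al)}=\frac{1}{\,1-\dfrac{y^{2}/\al}{1+y\beta/\al}\,}.
\]
Raising to the power $u$ and applying the binomial theorem once gives $\sum_{r\ge 0}\binom{-u}{r}(-1/\al)^{r}y^{2r}(1+y\beta/\al)^{-r}$; expanding $(1+y\beta/\al)^{-r}$ gives the factor $\binom{r+s-1}{s}(\beta/\al)^{s}(-1)^{s}y^{s}$; and finally expanding $y^{2r+s}$ in $t,v$ as above finishes the proof in three clean nested binomial sums. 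This simplification is precisely what the paper's \emph{Important remark} after the proposition refers to as the ``natural'' way $\beta$ enters the scheme, and it is the step your proposal lacks.
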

\begin{proof}
By the definition (4.2), denoting $y=(z-v)^{-1}$, we have
\begin{equation*}
H_{1/z}(uB(v))=\big(H_{1/z}(B(v))\big)^u=
\left(\frac{1-y(1-1/\al)}{1+y/\al} \,\frac{1}{1-y}\right)^u.
\end{equation*}
Using $\beta=1-\al$ this becomes
\begin{equation*}
H_{1/z}(uB(v))=\left(1-\frac{y^2/\al}{1+y\beta/\al}\right)^{-u}.
\end{equation*}
Applying the binomial development three times, we have
\begin{equation*}
\begin{split}
H_{1/z}(uB(v))&=\sum_{r\ge 0} \binom{-u}{r} 
(-1/\al)^ry^{2r}\left(1+y\beta/\al\right)^{-r}\\
&=\sum_{r,s\ge 0} \binom{-u}{r} \binom{r+s-1}{s}
(-1/\al)^{r+s}\beta^s y^{2r+s}\\
&=\sum_{r,s,t\ge 0} \binom{-u}{r} \binom{r+s-1}{s}
\binom{2r+s+t-1}{t} 
(-1/\al)^{r+s}\beta^s v^tz^{-2r-s-t}.
\end{split}
\end{equation*}
We identify the coefficient of $z^{-n}$.
\end{proof}

\noindent \textit{Important remark:} This result plays a fundamental role in the sequel since it is the ``natural'' way making the parameter $\beta$ enter our scheme. Although the fact that $\beta=1-\al$ is crucial for its proof, this restriction will no longer be used. From now on, $\beta$ will play an independent role.

\begin{prop}~\label{pr3}
For any $n\ge 2$ one has
\[M_n(\la^{(i)})-M_n(\la)=
\sum_{\begin{subarray}{c}r\ge 1, s,t \ge 0 \\ 2r+s+t\le n
\end{subarray}}
x_i^{n-2r-s-t} \binom{n-t-1}{2r+s-1}
\binom{r+s-1}{s}(1/\al)^{r+s}(-\beta)^s M_t(\la).
\]
\end{prop}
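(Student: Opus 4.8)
The plan is to combine Proposition~\ref{pr1} with Proposition~\ref{pr2} and the definition $M_n(\la)=h_n(A_\la)$. By Proposition~\ref{pr1} we have $A_{\la^{(i)}}=A_\la+B(x_i)$, so the addition formula (4.1) for complete symmetric functions of a sum of alphabets gives
\[
M_n(\la^{(i)})=h_n(A_{\la^{(i)}})=\sum_{k=0}^n h_k(A_\la)\,h_{n-k}(B(x_i))
=\sum_{k=0}^n M_k(\la)\,h_{n-k}(B(x_i)).
\]
The term $k=n$ contributes $M_n(\la)\,h_0(B(x_i))=M_n(\la)$, which is exactly the quantity subtracted on the left-hand side. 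So $M_n(\la^{(i)})-M_n(\la)=\sum_{k=0}^{n-1}M_k(\la)\,h_{n-k}(B(x_i))$, and it remains to insert the explicit expansion of $h_{n-k}(B(v))$ at $v=x_i$, which is Proposition~\ref{pr2} specialized to $u=1$.

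First I would set $u=1$ in Proposition~\ref{pr2}; then $\binom{-u}{r}=\binom{-1}{r}=(-1)^r$, so $(-1/\al)^{r+s}\binom{-1}{r}=(-1)^{2r+s}\al^{-r-s}=(1/\al)^{r+s}(-1)^s$, and combining with the $\beta^s$ already present yields the factor $(1/\al)^{r+s}(-\beta)^s$ that appears in the statement. Writing $m=n-k$ for the subscript of $h_m(B(x_i))$ and relabelling the summation index $k$ as $t$ (so $M_t(\la)$ multiplies $h_{n-t}(B(x_i))$), Proposition~\ref{pr2} gives
\[
h_{n-t}(B(x_i))=\sum_{\begin{subarray}{c}r\ge 1,\ s\ge 0\\ 2r+s\le n-t\end{subarray}}
x_i^{\,n-t-2r-s}\binom{n-t-1}{2r+s-1}\binom{r+s-1}{s}(1/\al)^{r+s}(-\beta)^s.
\]
Substituting this into $M_n(\la^{(i)})-M_n(\la)=\sum_{t=0}^{n-1}M_t(\la)\,h_{n-t}(B(x_i))$ and merging the two sums produces precisely the claimed triple sum over $r\ge 1$, $s,t\ge 0$ with $2r+s+t\le n$, the exponent of $x_i$ being $(n-t-2r-s)=n-2r-s-t$ and the binomial $\binom{n-t-1}{2r+s-1}$ matching. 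The constraint $t\le n-1$ is automatic from $r\ge1$, since $2r+s+t\le n$ forces $t\le n-2$.

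One small bookkeeping point deserves care: in the statement the outer index runs over $n\ge 2$, and we should note that for each admissible $t$ the inner sum over $(r,s)$ is nonempty only when $2r+s\le n-t$, which is exactly the range in which Proposition~\ref{pr2} was proved, so no boundary cases are lost. The main (and really only) obstacle is the purely notational one of matching the two binomial-coefficient conventions and the index relabelling cleanly; there is no analytic difficulty, since every ingredient — the sum-of-alphabets formula (4.1), the alphabet decomposition of Proposition~\ref{pr1}, and the closed form of Proposition~\ref{pr2} — is already established. I would therefore present the proof as: apply (4.1) using $A_{\la^{(i)}}=A_\la+B(x_i)$, peel off the $M_n(\la)$ term, invoke Proposition~\ref{pr2} with $u=1$, and collect terms.
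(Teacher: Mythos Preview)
Your proof is correct and follows exactly the paper's approach: apply (4.1) to $A_{\la^{(i)}}=A_\la+B(x_i)$, then invoke Proposition~\ref{pr2} with $u=1$, $v=x_i$. The paper's own proof is terser, but your additional bookkeeping (the simplification $\binom{-1}{r}(-1/\al)^{r+s}=(1/\al)^{r+s}(-1)^s$ and the index relabelling) makes the match with the stated formula explicit.
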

\begin{proof}
By (4.1) we have
\[M_n(\la^{(i)})=h_n(A_{\la^{(i)}})=
\sum_{t= 0}^n h_t(A_{\la})\,h_{n-t}(B(x_i)).\]
We apply Proposition~\ref{pr2} with $u=1, v=x_i$.
\end{proof}

\begin{theo}~\label{th6}
For any $n\ge 2$ one has
\begin{multline*}
R_n(\la^{(i)})-R_n(\la)=
\sum_{\begin{subarray}{c}r\ge 1, s,t\ge 0\\ 2r+s+t\le n
\end{subarray}}
x_i^{n-2r-s-t} \binom{n-t-1}{2r+s-1}
\binom{r+s-1}{s}\binom{n-1}{r}(-1/\al)^{r+s}\beta^s \\\times
\sum_{|\sigma|=t} \frac{u_\sigma}{1-n+t} \binom{1-n+t}{l(\sigma)}  R_\sigma(\la).
\end{multline*}
\end{theo}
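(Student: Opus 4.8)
The plan is to combine Proposition~\ref{pr3}, which expresses $M_n(\la^{(i)})-M_n(\la)$ in terms of lower moments $M_t(\la)$, with the conversion formula (5.6), which writes $(1-n)R_n(\la)=h_n((1-n)A_\la)$, and more generally with the expansion (5.5) relating $h_m(xA_\la)$ to the free cumulants $R_\mu(\la)$. The point is that $R_n$ depends on the whole alphabet $A_\la$ in a nonlinear way, so one cannot simply subtract as for moments; instead one must re-express the moment increment through the cumulant basis.

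First I would start from $(1-n)R_n(\la^{(i)})=h_n((1-n)A_{\la^{(i)}})$ and use Proposition~\ref{pr1}, namely $A_{\la^{(i)}}=A_\la+B(x_i)$, together with the addition rule (4.1) for $h_n$ of a sum of alphabets:
\[
(1-n)R_n(\la^{(i)})=\sum_{k=0}^{n} h_k\big((1-n)A_\la\big)\,h_{n-k}\big((1-n)B(x_i)\big).
\]
The second factor is evaluated by Proposition~\ref{pr2} with $u=1-n$ and $v=x_i$, which produces the binomial $\binom{-(1-n)}{r}=\binom{n-1}{r}$ and the powers of $1/\al$ and $\beta$ appearing in the statement. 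The first factor $h_k((1-n)A_\la)$ must then be expanded in the free cumulants. The natural tool is (4.11) specialized at $A=A_\la$ and $x=1-n$, which gives
\[
h_k\big((1-n)A_\la\big)=\frac{1-n}{k+1-n}\sum_{|\sigma|=k}\binom{k+1-n}{l(\sigma)}u_\sigma e^*_\sigma(A_\la),
\]
and since $R_j(\la)=(-1)^j e_j^*(A_\la)$ one rewrites $e^*_\sigma(A_\la)=(-1)^{|\sigma|}R_\sigma(\la)$; the sign $(-1)^k$ will combine with the $(-1/\al)^{r+s}$ and the $x_i$-power bookkeeping from Proposition~\ref{pr2}. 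Collecting the index $k=n-(\text{power of }x_i)$ and renaming $k=t+\dots$ appropriately so that $|\sigma|=t$, one arrives at the claimed double sum with the factor $\frac{1}{1-n+t}\binom{1-n+t}{l(\sigma)}u_\sigma R_\sigma(\la)$. The term $R_n(\la)$ itself is then split off from the $R_{n}(\la^{(i)})$ side (the $k=n$, $r=s=t$-contribution with trivial $B(x_i)$-factor), dividing through by $1-n$.

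The main obstacle will be the bookkeeping of the three nested binomial sums: one has to check that the exponents of $x_i$, the upper index $n-t-1$ in $\binom{n-t-1}{2r+s-1}$, and the shifted binomials $\binom{1-n+t}{l(\sigma)}$ all match after the reindexing, and in particular that the factor $(1-n)$ from (4.11) cancels correctly against the $(1-n)$ dividing $R_n(\la^{(i)})$ on the left — i.e. that the $\sigma=\varnothing$ ($t=0$) term reproduces exactly the $x_i$-polynomial part and not a spurious multiple of it. A clean way to organize this is to prove the identity first at the level of generating series in $z$, using $H_{1/z}(A_{\la^{(i)}})=H_{1/z}(A_\la)\,H_{1/z}(B(x_i))$ from Proposition~\ref{pr1} together with the series form of (4.11), and only extract the coefficient of $z^{-n}$ at the very end; this avoids juggling all three finite sums simultaneously. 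The restriction $\beta=1-\al$ is used only through Proposition~\ref{pr2} (via the \emph{Important remark} after it), so once that proposition is invoked $\beta$ may again be treated as formal, consistent with the paper's convention.
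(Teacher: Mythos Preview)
Your approach is essentially the paper's: start from $(1-n)R_n(\la^{(i)})=h_n((1-n)A_{\la^{(i)}})$, split via (4.1) using $A_{\la^{(i)}}=A_\la+B(x_i)$, evaluate the $B(x_i)$-piece by Proposition~\ref{pr2} with $u=1-n$, and expand $h_t((1-n)A_\la)$ in free cumulants by (5.5) (your (4.11) at $x=1-n$), with the $t=n$ term supplying $(1-n)R_n(\la)$. Two cosmetic points: Proposition~\ref{pr3} is not actually used, and the sign you worry about cancels cleanly---(5.5) already gives $h_t((1-n)A_\la)=\frac{1-n}{t+1-n}\sum_{|\sigma|=t}\binom{t+1-n}{l(\sigma)}u_\sigma R_\sigma(\la)$ with no residual $(-1)^k$ to combine with anything.
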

\begin{proof}
Applying (5.6) and (4.1) we have
\begin{equation*}
\begin{split}
(1-n)R_n(\la^{(i)})&=h_n((1-n)A_{\la^{(i)}})\\
&=\sum_{t= 0}^n h_t((1-n)A_{\la})\,h_{n-t}((1-n)B(x_i)).
\end{split}
\end{equation*}
We then evaluate $h_{n-t}((1-n)B(x_i))$ by Proposition~\ref{pr2} with $u=1-n, v=x_i$, and $h_t((1-n)A_{\la})$ by relation (5.5) with $x=1-n$.
\end{proof}

Expanding the right-hand side in descending powers of $x_i$, the first terms are
\begin{multline*}
R_n(\la^{(i)})-R_n(\la)=(n-1)/\al\, x_i^{n-2} -\binom{n-1}{2} \, \beta/\al^2 \,x_i^{n-3}\\
+(n-1)(n-3)\left((n-2)\beta^2/6\al^3-(n-2)^2/12\al^2
-R_2(\la)/\al\right)\,x_i^{n-4}+\ldots
\end{multline*}
By a product over the parts of $\rho$, Theorem~\ref{th6} yields
\begin{equation}
R_\rho(\la^{(i)})-R_\rho(\la)=
\sum_{k=2l(\rho)}^{|\rho|}
x_i^{|\rho|-k} 
\sum_{|\sigma|\le k-2} b_{k,\sigma}(\rho) R_\sigma(\la),
\end{equation}
where the coefficients $b_{k,\sigma}(\rho)$ are polynomials in $(1/\al, \beta)$ with integer coefficients, explicitly known. We shall need the value of $b_{k,\sigma}(\rho)$ for $|\sigma|=k-2$. 

\begin{prop}~\label{pr4}
In the right-hand side of (8.1) the contribution $|\sigma|= k-2$ is given by
\[\frac{1}{\al} \sum_{p \,\mathrm{part}\, \mathrm{of}\, \rho} m_p(\rho)(p-1) \, R_{\rho \setminus p} (\la)\, \sum_{k=2}^{p}
x_i^{p-k} \sum_{|\nu|=k-2} \binom{k-p-1}{l(\nu)}
u_\nu R_{\nu} (\la).\]
\end{prop}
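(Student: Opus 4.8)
The plan is to extract the $|\sigma|=k-2$ part of the expansion in Theorem~\ref{th6} (applied to each part of $\rho$) and then assemble the product over parts of $\rho$. First I would look at a single part: Theorem~\ref{th6} gives, for a part $p$ of $\rho$,
\[
R_p(\la^{(i)})-R_p(\la)=\sum_{\begin{subarray}{c}r\ge 1,\,s,t\ge 0\\ 2r+s+t\le p\end{subarray}}
x_i^{p-2r-s-t}\binom{p-t-1}{2r+s-1}\binom{r+s-1}{s}\binom{p-1}{r}(-1/\al)^{r+s}\beta^s
\sum_{|\sigma|=t}\frac{u_\sigma}{1-p+t}\binom{1-p+t}{l(\sigma)}R_\sigma(\la).
\]
In the notation of (8.1) the power of $x_i$ is $p-k$ with $k=2r+s+t$, and the associated $R$-monomial has degree $2r+s+t-2=k-2$ only when $r=1,s=0$; any $r\ge 2$ or $s\ge 1$ lowers the degree of the accompanying $R_\sigma(\la)$ strictly below $k-2$. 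So the $|\nu|=k-2$ part of $R_p(\la^{(i)})-R_p(\la)$ comes solely from $r=1$, $s=0$, $t=k-2$, which contributes
\[
\frac{1}{\al}\,(p-1)\sum_{k=2}^{p} x_i^{p-k}\sum_{|\nu|=k-2}\frac{u_\nu}{k-p-1}\binom{k-p-1}{l(\nu)}R_\nu(\la),
\]
after using $\binom{p-t-1}{1}=p-t-1=k-p-1$ (here $t=k-2$), $\binom{p-1}{1}=p-1$, and $(-1/\al)^{1}=-1/\al$, with the sign absorbed by $1-p+t=k-p-1$ in the denominator so that $\frac{1}{k-p-1}\binom{k-p-1}{l(\nu)}=\frac{1}{1-p+t}\binom{1-p+t}{l(\nu)}$ and the overall sign is $+1/\al$. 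One must check the boundary $k=p$ (so $t=p-2$, $\nu=\emptyset$), where $\frac{1}{k-p-1}\binom{k-p-1}{0}=\frac{1}{-1}\cdot 1$ times the $(-1/\al)$ gives $+1/\al$ as well, consistently, so the displayed formula with the convention $\binom{k-p-1}{l(\nu)}=\prod(k-p-1-\cdots)/l(\nu)!$ covers it uniformly.

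Next I would pass from a single part to $\rho$. Writing $\rho=(\rho_1,\ldots,\rho_{l})$ and $R_\rho(\la^{(i)})-R_\rho(\la)=\prod_j R_{\rho_j}(\la^{(i)})-\prod_j R_{\rho_j}(\la)$, I expand the product and collect terms by the total degree in $x_i$. The contribution to the coefficient of $x_i^{|\rho|-k}$ of highest possible $R$-degree $k-2$ is obtained by taking exactly one factor $R_{\rho_{j_0}}(\la^{(i)})-R_{\rho_{j_0}}(\la)$ in its \emph{top} ($|\nu|=k_0-2$) piece and all other factors equal to $R_{\rho_j}(\la)$ unchanged: any configuration that disturbs a second factor, or uses a non-top piece of the chosen factor, strictly lowers the $R$-degree relative to the $x_i$-degree removed (each undisturbed factor $R_{\rho_j}(\la)$ contributes degree $\rho_j$ and zero power of $x_i$, while a disturbed factor contributes at most degree $k_j-2$ against power $\rho_j-k_j$ of $x_i$; the deficit is additive in the number of disturbed factors). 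Summing over the choice of the distinguished part — which is exactly the sum over $p$ a part of $\rho$, weighted by the multiplicity $m_p(\rho)$ — and renaming $R_{\rho\setminus p}(\la)$ for the product of the untouched factors gives precisely
\[
\frac{1}{\al}\sum_{p\,\mathrm{part}\,\mathrm{of}\,\rho} m_p(\rho)(p-1)\,R_{\rho\setminus p}(\la)\sum_{k=2}^{p} x_i^{p-k}\sum_{|\nu|=k-2}\binom{k-p-1}{l(\nu)}u_\nu R_\nu(\la),
\]
after noting that $\frac{u_\nu}{k-p-1}\binom{k-p-1}{l(\nu)}=u_\nu\binom{k-p-1}{l(\nu)}$ under the paper's binomial convention only when $l(\nu)\ge 1$; for $l(\nu)=0$ (i.e.\ $k=p$) one has $u_\nu=0!/1=1$ and the factor is $1\cdot 1$, matching the $k=p$, $\nu=\emptyset$ term just computed. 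This is the asserted Proposition.

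The main obstacle is purely bookkeeping: verifying that no lower-$r$-but-higher-$t$ combination, nor any multi-factor interaction, can sneak into the $|\sigma|=k-2$ stratum, and that the signs and the $u$-factors from $u_\sigma$ in Theorem~\ref{th6} combine with the binomial convention $\binom{x}{k}=x(x-1)\cdots(x-k+1)/k!$ to produce the clean $+\frac{1}{\al}$ prefactor and the stated $\binom{k-p-1}{l(\nu)}u_\nu$. The degree-deficit argument — each disturbed factor or sub-top selection costs at least one unit of $R$-degree per unit of $x_i$-degree removed — is what makes the extraction rigorous, and I would state it as a short lemma about the bigrading $(\deg_{x_i},\deg_R)$ of the summands in (8.1) before doing the index chase. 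Once that grading observation is in place the rest is a direct substitution $r=1$, $s=0$ into Theorem~\ref{th6}.
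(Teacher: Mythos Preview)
Your approach is the paper's: the $|\sigma|=k-2$ stratum forces exactly one factor of $\prod_j R_{\rho_j}(\la^{(i)})$ to be perturbed via the $r=1$, $s=0$ term of Theorem~\ref{th6}, the others remaining $R_{\rho_j}(\la)$ --- exactly the paper's remark that $\sum_j(2r_j+s_j)=2$ pins down the configuration. The degree-deficit bookkeeping for the multi-factor part is correct.

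The single-part arithmetic is garbled, though. With $t=k-2$ one has $\binom{p-t-1}{1}=p-t-1=p-k+1$, \emph{not} $k-p-1$; the correct cancellation is
\[
(p-k+1)\cdot(-1/\al)\cdot\frac{1}{1-p+t}=\frac{p-k+1}{k-p-1}\cdot\frac{-1}{\al}=\frac{1}{\al},
\]
which already removes the factor $\tfrac{1}{k-p-1}$, so your intermediate displayed formula carries a stray denominator. The later ``simplification'' $\tfrac{u_\nu}{k-p-1}\binom{k-p-1}{l(\nu)}=u_\nu\binom{k-p-1}{l(\nu)}$ is false as an identity; it only accidentally compensates for the earlier sign slip. (Also, $\nu=\emptyset$ corresponds to the boundary $k=2$, not $k=p$.) Once you correct $p-t-1=p-k+1$ the computation collapses in one line and matches the paper's proof.
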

\begin{proof}
If we associate a triple $(r_i,s_i,t_i),\, 2r_i+s_i+t_i\le i$ to any part $i$ of $\rho$, the condition $|\sigma|= k-2$ reads $\sum_i(2r_i+s_i)=2$, which implies $r_i=s_i=0,t_i=i$ for all $i$ but one, say $p$, for which $r_p=1,s_p=0$. Then $\sigma$ is formed by the union of $\rho \setminus p$ and a partition $\nu$ with $|\nu|=t_p$.
\end{proof}

Finally let us mention a consequence of Theorem~\ref{th6} (which is not true for moments).
\begin{prop}~\label{pr4b}
The quantity $-R_n(\la)$ is a polynomial in $-1/\al$ with nonnegative integer coefficients.
\end{prop}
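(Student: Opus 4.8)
The plan is to extract the required positivity from the explicit formula of Theorem~\ref{th6}, using induction on $n$ together with the fact that the $\alpha$-contents $x_i = \la_i - (i-1)/\al$ are themselves polynomials in $-1/\al$ with nonnegative integer coefficients (since $x_i = \la_i + (i-1)\cdot(-1/\al)$ with $\la_i, i-1 \ge 0$). First I would set $q = -1/\al$ and rewrite Theorem~\ref{th6} as a recursion expressing $R_n(\la^{(i)})$ in terms of $R_n(\la)$, the $R_\sigma(\la)$ with $|\sigma| = t < n$, and powers of $x_i$. The base cases $R_1(\la)=0$, $R_2(\la)=|\la|/\al$ — whose negative is $|\la|\cdot q$ — are immediate; here one uses that $-R_1=0$ and $-R_2$ are (trivially) in $\mathbf{Z}_{\ge 0}[q]$, noting the empty diagram gives $R_n=0$.

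The inductive engine is that every partition $\la$ is built from the empty diagram by successively adding nodes, so it suffices to show that if $-R_\sigma(\la) \in \mathbf{Z}_{\ge 0}[q]$ for all $\sigma$ (equivalently for the generators $-R_k(\la)$, $k\ge 2$), then the same holds for $\la^{(i)}$. Writing $-R_n(\la^{(i)}) = -R_n(\la) + \big(R_n(\la) - R_n(\la^{(i)})\big)$, one must check that $R_n(\la)-R_n(\la^{(i)})$, i.e. the negative of the right-hand side of Theorem~\ref{th6}, lies in $\mathbf{Z}_{\ge 0}[q]$ modulo the inductive hypothesis on the $R_\sigma(\la)$. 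In the formula of Theorem~\ref{th6} the factor $x_i^{n-2r-s-t}$ is already in $\mathbf{Z}_{\ge 0}[q]$; the three binomials in $n$ and $s$ are nonnegative integers; the factor $(-1/\al)^{r+s} = q^{r+s}$ is a monomial in $q$ with coefficient $1$; the troublesome pieces are the factor $\beta^s$ and the inner sum $\sum_{|\sigma|=t}\frac{u_\sigma}{1-n+t}\binom{1-n+t}{l(\sigma)}R_\sigma(\la)$. For the inner sum, note that $\frac{1}{1-n+t}\binom{1-n+t}{l(\sigma)} = \frac{1}{l(\sigma)}\binom{-n+t}{l(\sigma)-1}$ and $\binom{-n+t}{l(\sigma)-1} = (-1)^{l(\sigma)-1}\binom{n-t+l(\sigma)-2}{l(\sigma)-1}$, so up to the sign $(-1)^{l(\sigma)-1}$ the coefficient is a nonnegative rational; combined with $R_\sigma(\la) = (-1)^{l(\sigma)}\prod_k(-R_{\sigma_k}(\la))$ (since each $R_{\sigma_k}$ contributes one sign), the signs $(-1)^{l(\sigma)-1}$ and $(-1)^{l(\sigma)}$ leave an overall $-1$, which is exactly absorbed when passing from $R_n(\la^{(i)})-R_n(\la)$ to its negative. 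Thus each term of $-(R_n(\la^{(i)})-R_n(\la))$ is, by the inductive hypothesis, a nonnegative rational combination of monomials in $q$ and $\beta$; integrality then follows from the already-established integrality (Theorem~\ref{th5}, or the construction of Section~5) since a rational polynomial known to have integer coefficients and nonnegative coefficients has coefficients in $\mathbf{Z}_{\ge 0}$.

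The one genuinely delicate point is the handling of $\beta = 1-\al = 1 - 1/(-q) $; since $\beta$ is \emph{not} itself a polynomial in $q$ with nonnegative coefficients, one cannot simply substitute. The resolution is that $\beta^s$ appears only through the combination $q^{r+s}\beta^s = q^{r+s}(1-\al)^s$; I would re-expand using $q^{r+s}(1-\al)^s$ but more usefully observe — as the paper itself flags in the ``Important remark'' after Proposition~\ref{pr2} — that before $\beta$ was declared independent, $\beta/\al$-type combinations arose precisely as $-1/\al$ times $(\al-1)$, and tracking the derivation of Proposition~\ref{pr2} shows $\beta^s(-1/\al)^{r+s}$ always comes paired so that the net power of $1/\al$ dominates: concretely $(-1/\al)^{r+s}\beta^s = (-1/\al)^r \big((-1/\al)(1-\al)\big)^s = (-1/\al)^r(-1/\al + 1)^s = q^r(q+1)^s$, and $q+1 = 1-1/\al$, which for the \emph{polynomial in $q$} claim is $1+q \in \mathbf{Z}_{\ge 0}[q]$. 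Hence $q^r(1+q)^s \in \mathbf{Z}_{\ge 0}[q]$, and every factor in Theorem~\ref{th6} is then visibly in $\mathbf{Z}_{\ge 0}[q]$ up to the single global sign discussed above. I expect this bookkeeping of signs and the $\beta \mapsto 1+q$ substitution to be the main obstacle; once it is set up cleanly the induction closes immediately, and it is worth remarking that the argument really does use $\beta = 1-\al$ (unlike the later sections), consistent with the statement of Proposition~\ref{pr4b} being phrased purely in terms of $-1/\al$.
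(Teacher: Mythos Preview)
Your approach is essentially the paper's own (induction on $|\la|$ via Theorem~\ref{th6}); the paper's proof is the one-liner ``by induction on $|\la|$, starting from $\la=(0)$: if the property holds for $R_n(\la)$ it holds for $R_n(\la^{(i)})$'', and you have supplied exactly the details that make that inductive step work---most importantly the rewriting $(-1/\al)^{r+s}\beta^s=q^r(1+q)^s$ and the sign bookkeeping on the inner sum. Two small clean-ups: your opening phrase ``induction on $n$'' is a slip (the argument you actually run is induction on $|\la|$, as it must be since the inner sum involves all $R_\sigma(\la)$); and your identity $\frac{1}{1-n+t}\binom{1-n+t}{l(\sigma)}=\frac{1}{l(\sigma)}\binom{-n+t}{l(\sigma)-1}$ needs $l(\sigma)\ge1$, so the $t=0$ term (empty $\sigma$, inner sum $=1/(1-n)$) should be noted separately---its sign is still the required $-1\times(\text{positive rational})$, so nothing changes. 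Your appeal to Section~5 for integrality is the right move (it follows from $R_n(\la)=(-1)^n e_n^*(A_\la)$ and the fact that compositional inversion preserves integer coefficients), whereas Theorem~\ref{th5} as stated only gives polynomiality.
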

\begin{proof}By induction on $|\la|$, starting from $\la=(0)$. If the property is true for $R_n(\la)$, it is true for $R_n(\la^{(i)})$.
\end{proof}

\section{An algorithm}

We now present our method and its first consequences. We begin with a result following directly from Theorem~\ref{th5} and the fact~\cite[Proposition 2]{La4} that $\vartheta^{\la}_{\mu}$ is a shifted symmetric polynomial of $\la$.
\begin{prop}~\label{pr5}
Let $\mu$ be a partition with no part  $1$. For any $\la$ with $|\la| \ge |\mu|$, the quantity $\vartheta^{\la}_{\mu}$ may be written as a polynomial $K_\mu$ in the free cumulants $\{R_k(\la), k\ge 2\}$, with coefficients in $\mathbf{Q}(\alpha)$.
\end{prop}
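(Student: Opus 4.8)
The plan is to combine two facts already established in the paper: first, that $\vartheta^{\la}_{\mu}$ is a shifted symmetric polynomial of $\la$ (this is \cite[Proposition 2]{La4}, quoted in Section 3); and second, that the free cumulants $\{R_k(\la), k\ge 2\}$ form an algebraic basis of the shifted symmetric algebra $\mathcal{S}^{\ast}$ (this is Theorem~\ref{th5}). Since $\vartheta^{\la}_{\mu}\in\mathcal{S}^{\ast}$ and $\mathcal{S}^{\ast}=\mathbf{Q}(\alpha)[R_2,R_3,\ldots]$ as a polynomial algebra, there must exist a polynomial $K_\mu$ with coefficients in $\mathbf{Q}(\alpha)$ such that $\vartheta^{\la}_{\mu}=K_\mu\big(R_2(\la),R_3(\la),\ldots\big)$. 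This is the entire content of the proposition, so the proof is essentially a one-line deduction.

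The one subtlety worth addressing is why $K_\mu$ involves only \emph{finitely many} free cumulants — indeed the paper's later refined statement claims only $R_2,\ldots,R_{|\mu|-l(\mu)+2}$ appear. I would note that any element of $\mathcal{S}^{\ast}$, being a finite polynomial expression in the generators $\{p_k(C_\la)\}$ (equivalently in finitely many moments, hence finitely many free cumulants by the triangular relations (5.2)--(5.4)), automatically lies in $\mathbf{Q}(\alpha)[R_2,\ldots,R_M]$ for some finite $M$. For the bare statement of Proposition~\ref{pr5} this suffices; pinning down the optimal bound $M=|\mu|-l(\mu)+2$ is a separate matter deferred to the algorithmic analysis in the later sections (via degree considerations coming from the recursion (3.6) and the node-addition formula of Theorem~\ref{th6}), and need not be argued here.

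There is essentially no obstacle: the result is a formal consequence of the two cited inputs. The only thing to be careful about is invoking Theorem~\ref{th5} correctly — specifically, that it asserts algebraic \emph{independence} of the $R_k$ (so that $K_\mu$ is well-defined as a polynomial, not merely that some polynomial representation exists), and that the coefficients of $K_\mu$ a priori live in the coefficient field $\mathbf{Q}(\alpha)$ of $\mathcal{S}^{\ast}$, nothing smaller. So the proof reads: By \cite[Proposition 2]{La4}, $\vartheta^{\la}_{\mu}\in\mathcal{S}^{\ast}$. By Theorem~\ref{th5}, $\{R_k(\la), k\ge 2\}$ is an algebraic basis of $\mathcal{S}^{\ast}$ over $\mathbf{Q}(\alpha)$. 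Hence $\vartheta^{\la}_{\mu}$ is uniquely a polynomial in these free cumulants with coefficients in $\mathbf{Q}(\alpha)$, and since it is a fixed element it involves only finitely many of them; call this polynomial $K_\mu$.
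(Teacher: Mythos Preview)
Your proposal is correct and takes essentially the same approach as the paper: the paper states that the proposition follows directly from Theorem~\ref{th5} and the fact \cite[Proposition 2]{La4} that $\vartheta^{\la}_{\mu}$ is a shifted symmetric polynomial of $\la$, which is precisely your argument. Your additional remarks on finiteness and the uniqueness of $K_\mu$ are accurate but go slightly beyond what the paper records at this point, as the sharp bound is indeed handled later in Proposition~\ref{pr6}.
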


We shall compute this polynomial $K_\mu$ by induction on $|\mu|-l(\mu)$. We assume that 
$K_\nu$ is known for $|\nu|-l(\nu)=d$. We fix a partition $\mu$ with $|\mu|-l(\mu)=d+1$. We write the polynomial $\vartheta^{\la}_{\mu}$ as
\[\vartheta^{\la}_{\mu}=\sum_{\rho} a_\rho^\mu R_\rho(\la),\]
the coefficients $a_\rho^\mu \in \mathbf{Q}(\alpha)$ to be determined.

\begin{itemize}
\item[(i)] In the right-hand side of the second equation (3.6), the partitions $\nu=\mu \setminus 2$ and $\nu=\mu_{\downarrow (r)}$ satisfy $|\nu|-l(\nu)=d$. 
We substitute the known values of $K_{\mu \setminus 2}$ and $K_{\mu_{\downarrow (r)}}$ in this equation. 

\item[(ii)] We apply Theorem~\ref{th6} to the left-hand sides of equations (3.6). We have
\begin{equation*}
\begin{split}
\vartheta^{\la^{(i)}}_{\mu}-\vartheta^{\la}_{\mu}&=\sum_{\rho} a_\rho^\mu \big(R_\rho(\la^{(i)})-R_\rho(\la)\big)\\
&=\sum_{\rho} a_\rho^\mu \sum_{k=2l(\rho)}^{|\rho|}x_i^{|\rho|-k}
\sum_{|\sigma|\le k-2} b_{k,\sigma}(\rho) R_\sigma(\la).
\end{split}
\end{equation*}
From Theorem~\ref{th4} we deduce
\begin{equation*}
\begin{split}
\sum_{i=1}^{l(\la)+1} 
c_i(\la) \, \vartheta^{\la^{(i)}}_\mu&=\vartheta^{\la}_{\mu}+
\sum_{\rho} a_\rho^\mu \sum_{k=2l(\rho)}^{|\rho|}M_{|\rho|-k}(\la)
\sum_{|\sigma|\le k-2} b_{k,\sigma}(\rho) R_\sigma(\la),\\
\sum_{i=1}^{l(\la)+1} 
c_i(\la) \,x_i\, \vartheta^{\la^{(i)}}_\mu&=
\sum_{\rho} a_\rho^\mu \sum_{k=2l(\rho)}^{|\rho|}M_{|\rho|-k+1}(\la)
\sum_{|\sigma|\le k-2} b_{k,\sigma}(\rho) R_\sigma(\la).
\end{split}
\end{equation*}

\item[(iii)] Now we may omit $\la$ for a clearer display. Equations (3.6) read
\begin{eqnarray}
\sum_{\rho} a_\rho^\mu \sum_{k=2l(\rho)}^{|\rho|}M_{|\rho|-k} \,
\sum_{|\sigma|\le k-2} b_{k,\sigma}(\rho) R_\sigma=0,&\\
\nonumber \sum_{\rho} a_\rho^\mu \sum_{k=2l(\rho)}^{|\rho|}M_{|\rho|-k+1} \,
\sum_{|\sigma|\le k-2 } b_{k,\sigma}(\rho) R_\sigma=&\\
2 (\al R_2-|\mu|+2)\,m_2(\mu)\,K_{{\mu \setminus 2}}+
\sum _{r\ge 3} &rm_r(\mu)\,K_{\mu_{\downarrow (r)}}.
\end{eqnarray}

\item[(iv)] We express the moments $M_k$ in terms of the free cumulants by (5.3).

We are left with two equations involving polynomials in the basis of free cumulants. By identification we obtain a linear system in the unknown coefficients $a_\rho^\mu$.

It turns out that this linear system has \textit{one and only one} solution. This fact might appear mysterious, but actually it is not. It is only an alternative formulation of Theorem~\ref{th1}.
\end{itemize}

\noindent \textit{Remarks:}
\textit{(i)} As already emphasized, this algorithm does not use the fact that $\al+\beta=1$, except at the step (ii). Otherwise $\al$ and $\beta$ may be independent parameters. This point will be further investigated in Section 15.

\noindent \textit{(ii)} Our method remains valid for $\al=1$. In this case, it gives a new algorithm to compute classical Kerov polynomials. Up to now these polynomials were only computed through Frobenius formula (see for instance~\cite[Section 3.3.2]{R}).

\begin{prop}~\label{pr6}
\begin{itemize}
\item[(i)] $K_\mu$ is a polynomial in $\{R_2(\la),R_3(\la),\ldots,R_{|\mu|-l(\mu)+2}(\la)\}$. 
\item[(ii)]The highest weight of $K_\mu$ is $|\mu|+l(\mu)$ (with the weight of $R_\rho$ defined as $|\rho|$).
\end{itemize}
\end{prop}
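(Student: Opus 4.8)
The plan is to prove both statements by running the induction on $|\mu|-l(\mu)$ already set up for the algorithm, keeping track of the two gradings (total weight of a monomial $\prod_k R_{\rho_k}$, namely $|\rho|$, and the degree in the $R$'s, namely $l(\rho)$) as the algorithm proceeds. Since the linear system in step (iv) has a unique solution, it suffices to exhibit a candidate $K_\mu$ supported on monomials satisfying (i) and (ii) and show that such a candidate is compatible with equations (9.1)--(9.2); uniqueness then forces $K_\mu$ to be exactly that candidate. So the real content is a bookkeeping argument on how the operations in steps (i)--(iii) move weight and degree around.

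First I would fix the weight statement (ii). From the example $\mu=(2)$ and the closed forms listed in the introduction one sees that $K_\mu$ is not homogeneous; I will instead show $K_\mu$ has highest weight $|\mu|+l(\mu)$, i.e. every monomial $\prod R_{\rho_k}$ occurring has $|\rho|\le |\mu|+l(\mu)$, with equality attained. For the induction, note that $K_{\mu\setminus 2}$ and $K_{\mu_{\downarrow(r)}}$ have highest weight $(|\mu|-2)+l(\mu)$ and $(|\mu|-1)+(l(\mu)-1)=|\mu|+l(\mu)-2$ respectively; multiplying $K_{\mu\setminus 2}$ by the factor $2(\alpha R_2-|\mu|+2)m_2(\mu)$ raises the top weight of that term by $2$ (from the $R_2$ factor), so the right-hand side of (9.2) has highest weight exactly $|\mu|+l(\mu)$. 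On the left-hand side, Theorem~\ref{th6} expanded as (8.1) tells us that $R_\rho(\la^{(i)})-R_\rho(\la)$ contributes, via $M_{|\rho|-k+1}$ and $\sum_{|\sigma|\le k-2}b_{k,\sigma}(\rho)R_\sigma$, monomials of weight $(|\rho|-k+1)+|\sigma|\le |\rho|-k+1+(k-2)=|\rho|-1$ after re-expanding $M_{|\rho|-k+1}$ in the $R$'s using (5.3) (which is weight-preserving: $M_j$ and the corresponding $R$-polynomial both have weight $j$). Hence a monomial $R_\rho$ in $\vartheta^\la_\mu$ with $|\rho|=w$ feeds, through (9.2), into terms of weight at most $w-1$ on the left — wait, that is the wrong direction. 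The correct reading is: the coefficient of a top-weight monomial $R_\sigma$ on the RHS of (9.2), of weight $w=|\mu|+l(\mu)$, must be matched on the LHS; and the LHS produces weight-$w$ monomials only from those $a^\mu_\rho R_\rho$ with $|\rho|$ one larger, forcing $K_\mu$ to have a monomial of weight $|\mu|+l(\mu)+1$ — no. I will instead argue directly: equation (9.1), which has zero right-hand side, relates the $a^\mu_\rho$ of highest weight among themselves, while (9.2) pins them down against the (known) RHS; a clean way is to observe that the operator on the LHS of (9.2), after expressing moments via (5.3), sends a monomial $R_\rho$ to a sum of monomials of weight $\le |\rho|-1$ with the weight-$(|\rho|-1)$ part having a nonzero, explicit leading coefficient (namely the $x_i^{|\rho|-k}$, $k=2l(\rho)$, ``$|\sigma|=0$'' term of Proposition~\ref{pr4}, which contributes $\tfrac{1}{\alpha}\sum_p m_p(\rho)(p-1)\cdot$(something nonzero)). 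Therefore the LHS operator is weight-strictly-decreasing-by-one and invertible on each weight graded piece, so $K_\mu$ has highest weight exactly one more than the highest weight of the RHS of (9.2), which is $|\mu|+l(\mu)$.

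For part (i), I would piggyback on the weight bound: if $|\rho|\le |\mu|+l(\mu)$ then the largest possible single part is obtained when $\rho=(|\mu|+l(\mu))$, i.e. $\rho$ has length $1$. But the length (degree in $R$) is also controlled: every operation — multiplication of $K_{\mu\setminus 2}$ by $R_2$ (degree $+1$), the expansion (5.3) of $M_j=\sum_{|\nu|=j}\ldots R_\nu$ (which introduces factors of total degree up to $j$), and the $b_{k,\sigma}(\rho)$ expansion — needs to be tracked to see that a single part $R_k$ can only arise with $k\le |\mu|-l(\mu)+2$. I expect the cleanest route is a second induction showing simultaneously: $|\rho|\le |\mu|+l(\mu)$ and $|\rho|-l(\rho)\le |\mu|-l(\mu)$ for every monomial of $K_\mu$; indeed $|\rho|-l(\rho) = \sum_k(\rho_k-1)$ is exactly the grading ``number of boxes minus number of rows'' that the whole paper is organized around, and all the basic moves raise it by at most the corresponding quantity: multiplying by $R_2$ raises $|\rho|-l(\rho)$ by $1$ (matching $d\to d+1$); Theorem~\ref{th6}/Proposition~\ref{pr4} shows $R_\rho(\la^{(i)})-R_\rho(\la)$, re-expanded, only involves $R_\sigma$ with $|\sigma|-l(\sigma)\le |\rho|-l(\rho)$; and (5.3) preserves $|\cdot|$ while it can only decrease $|\cdot|-l(\cdot)$. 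Granting $|\rho|-l(\rho)\le |\mu|-l(\mu)=:d+1$ and the largest part $\rho_k\le (|\rho|-l(\rho))+1 \le d+3 = |\mu|-l(\mu)+2$ — here using $\rho_k - 1 \le \sum_j(\rho_j-1) = |\rho|-l(\rho)$ — gives (i) at once.

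The main obstacle will be (iii): verifying that the algorithm's operations really are monotone in the grading $|\rho|-l(\rho)$, in particular checking that the re-expansion of $M_{|\rho|-k+1}(\la)$ via (5.3) cannot accidentally manufacture a monomial with too large a value of $|\rho|-l(\rho)$ or too large a single part. This requires a careful look at Proposition~\ref{pr4} and the explicit form of $b_{k,\sigma}(\rho)$ for $|\sigma|<k-2$ (only the $|\sigma|=k-2$ case is worked out in the excerpt), so I would need to extract from the proof of Theorem~\ref{th6} the general bound $|\sigma|-l(\sigma)\le |\rho|-l(\rho)$ and, crucially, the bound on the largest part of $\sigma$ — that is where the constant ``$+2$'' in $R_{|\mu|-l(\mu)+2}$ is born, and it is the one place where a genuine inequality (rather than pure bookkeeping) has to be proved. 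Everything else is then assembled from the uniqueness of the solution to the linear system, exactly as in step (iv) of the algorithm.
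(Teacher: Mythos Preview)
Your overall strategy for (ii) --- compare the top weight on the two sides of (9.2) and use that the left-hand operator drops weight by exactly one --- is precisely the paper's argument. However you miscompute the lengths: removing a part $2$ from $\mu$ decreases $l(\mu)$ by $1$, so $|\mu\setminus 2|+l(\mu\setminus 2)=(|\mu|-2)+(l(\mu)-1)=|\mu|+l(\mu)-3$, not $|\mu|+l(\mu)-2$; and $\mu_{\downarrow(r)}$ for $r\ge 3$ keeps the same length, so $|\mu_{\downarrow(r)}|+l(\mu_{\downarrow(r)})=|\mu|+l(\mu)-1$, not $|\mu|+l(\mu)-2$. After multiplying $K_{\mu\setminus 2}$ by $R_2$ the right-hand side thus has top weight $|\mu|+l(\mu)-1$, and your ``LHS drops weight by one'' observation then gives the correct conclusion $|\mu|+l(\mu)$. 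So the argument works, but only because two errors cancel. (The paper runs this as a one-line induction on $|\mu|+l(\mu)$.)

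For (i), there is a genuine gap. Your proposed auxiliary inequality $|\rho|-l(\rho)\le |\mu|-l(\mu)$ is \emph{false}: already in $K_2=\alpha^2 R_3+\alpha\beta R_2$ the monomial $R_3$ has $|\rho|-l(\rho)=2>1=|\mu|-l(\mu)$. The correct bound would be $|\rho|-l(\rho)\le |\mu|-l(\mu)+1$, and only with that shift does $\rho_k-1\le |\rho|-l(\rho)$ give $\rho_k\le |\mu|-l(\mu)+2$; your line ``$\le d+3$'' hides a second arithmetic slip that again happens to land on the right number. More to the point, this whole detour is unnecessary. The paper's proof of (i) is three lines: if some $R_p$ occurs in $K_\mu$, then by Theorem~\ref{th6} (look at the highest power of $x_i$) the left side of (9.2) produces a term $M_{p-1}$, hence $R_{p-1}$; by the inductive hypothesis on $|\nu|-l(\nu)=d$ the right side involves no cumulant beyond $R_{d+2}$, so $p-1\le d+2$. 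You do not need to track the full grading $|\rho|-l(\rho)$, nor to control the sub-leading coefficients $b_{k,\sigma}(\rho)$ for $|\sigma|<k-2$.
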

\begin{proof}
Both properties are true for $\mu=(2)$. They are proved by induction. 

Firstly assume that $(i)$ is true for partitions $\nu$ with $|\nu|-l(\nu)=d$. Let $R_{p}$ occur in $K_\mu$ with $|\mu|-l(\mu)=d+1$. By~Theorem~\ref{th6} this produces a term with $M_{p-1}$ (hence $R_{p-1}$) at the left-hand side of (9.2). However by the inductive hypothesis, the right-hand side involves only free cumulants up to $R_{d+2}$. By comparison we have $p-1\le d+2$.

Secondly assume that $(ii)$ is true for partitions $\nu$ with $|\nu|+l(\nu)=d$. Let $p$ be the highest weight of $K_\mu$ with $|\mu|+l(\mu)=d+1$. By the inductive hypothesis, the highest weight of the right-hand side of (9.2) is $d$. But the highest weight of the left-hand side is $p-1$, because it is produced by some term $M_{p-k+1} R_\sigma$ with $|\sigma|=k-2$. We conclude by comparison.
\end{proof}

From now on, we shall omit $\la$ and write $(R_2,R_3,\ldots,R_{|\mu|-l(\mu)+2})$ for the indeterminates of $K_\mu$. As mentioned in the introduction, we conjecture its coefficients to be polynomials in $\al$ with integer coefficients.

\section{The case of rows}

Up to Section 14 we  now restrict to the case of a row partition $\mu=(r)$. Equations (3.6) read
\begin{equation*}
\begin{split}
\sum_{i=1}^{l(\la)+1} 
c_i(\la) \, \vartheta^{\la^{(i)}}_r&=\vartheta^\la_r,\\
\sum_{i=1}^{l(\la)+1} 
c_i(\la) \,\big(\la_i-(i-1)/\al\big) \, \vartheta^{\la^{(i)}}_r&=r\,\vartheta^\la_{r-1}.
\end{split}
\end{equation*}
With $K_{r}=\sum_{|\rho|\le r+1} a_\rho^{(r)} R_\rho$ our algorithm becomes
\begin{eqnarray}
\sum_{|\rho|\le r+1} a_\rho^{(r)} \sum_{k=2l(\rho)}^{|\rho|}M_{|\rho|-k}
\sum_{|\sigma|\le k-2} b_{k,\sigma}(\rho) R_\sigma=0,&\\
\sum_{{|\rho|\le r+1}} a_\rho^{(r)} \sum_{k=2l(\rho)}^{|\rho|}M_{|\rho|-k+1}
\sum_{|\sigma|\le k-2 } b_{k,\sigma}(\rho) R_\sigma=
r \sum_{|\tau|\le r} a_\tau^{(r-1)} R_\tau.&
\end{eqnarray}

In this section we shall identify the terms of weight $r+1$ and $r$ in $K_r$. The following auxiliary lemma will play a crucial role.
\begin{prop}~\label{pr7}
Let $\epsilon=0,1$. We write
\[\sum_{{|\rho|=s}} a_\rho \sum_{k=2l(\rho)}^{s}M_{s-k+\epsilon}\sum_{|\sigma|= k-2 } b_{k,\sigma}(\rho) R_\sigma=
\sum_{|\tau|=s-2+\epsilon} x_\tau R_\tau.\]

(i) If $\epsilon=1$ only partitions $\tau_{\uparrow (p)}=\tau \setminus p\cup (p+1)$ contribute to $x_\tau$. More precisely
\[x_\tau=1/\al \sum_{p \,\mathrm{part}\, \mathrm{of}\, \tau} p\, (m_{p+1}(\tau)+1)\,a_{\tau_{\uparrow (p)}}.\]

(ii) If $\epsilon=0$ only $\tau \cup 2$ contributes to $x_\tau$ and
\[x_\tau=1/\al \,(m_2(\tau)+1) \,a_{\tau\cup 2}.\]
\end{prop}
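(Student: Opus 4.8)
The plan is to strip off one part of $\rho$ at a time: first turn the moments $M_m$ into evaluations $\sum_i c_i(\la)x_i^m$, then use Proposition~\ref{pr4} to describe the $|\sigma|=k-2$ part of a single deformation $R_\rho(\la^{(i)})-R_\rho(\la)$, and finally recognise the residual one‑part sum as one of the Lagrange identities (5.7)--(5.8). Throughout I may assume that all partitions in sight have parts $\ge 2$, since $R_\rho$ vanishes as soon as $\rho$ has a part $1$ (so in particular $|\rho|\ge 2l(\rho)$).

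Fix $\epsilon\in\{0,1\}$ and a partition $\rho$ with $|\rho|=s$. By Theorem~\ref{th4}, $M_m(\la)=\sum_i c_i(\la)x_i^m$ with $x_i=\la_i-(i-1)/\al$; hence the $\rho$-summand of the left-hand side equals $\sum_i c_i(\la)x_i^{\epsilon}$ times $\sum_k x_i^{s-k}\sum_{|\sigma|=k-2}b_{k,\sigma}(\rho)R_\sigma$, and by the expansion (8.1) the latter factor is exactly the $|\sigma|=k-2$ part of $R_\rho(\la^{(i)})-R_\rho(\la)$. Substituting the value of this part given by Proposition~\ref{pr4}, interchanging the sums, applying Theorem~\ref{th4} a second time (now $\sum_i c_i(\la)x_i^{p-k'+\epsilon}=M_{p-k'+\epsilon}$), and reindexing the inner double sum by $k=p-k'+1$, I arrive at
\[\sum_{k=2l(\rho)}^{s}M_{s-k+\epsilon}\sum_{|\sigma|=k-2}b_{k,\sigma}(\rho)R_\sigma
=\frac{1}{\al}\sum_{p\,\mathrm{part}\,\mathrm{of}\,\rho}m_p(\rho)(p-1)\,R_{\rho\setminus p}
\sum_{k=1}^{p-1}M_{k-1+\epsilon}\sum_{|\nu|=p-1-k}\binom{-k}{l(\nu)}u_\nu R_\nu.\]

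Now I would evaluate the inner $(k,\nu)$-sum for each part $p$. For $\epsilon=1$ it is precisely the left-hand side of (5.7) with $n=p-1$, hence equals $R_{p-1}(\la)$; the parts $p=2$ then yield $R_1=0$ and drop out. For $\epsilon=0$ it is the left-hand side of (5.8) with $n=p-1$: since (5.8) holds for $n\ge 2$ this vanishes when $p\ge 3$, whereas for $p=2$ the sum collapses to its single term $M_0\binom{-1}{0}u_\emptyset R_\emptyset=1$. I expect this last observation — that (5.8) degenerates exactly at $n=1$, leaving the bare constant $1$ — to be the only subtle point of the argument; it is precisely this degeneracy that singles out the part $2$ in case~(ii), the rest being bookkeeping.

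It remains to sum over $\rho$ and collect the coefficient $x_\tau$ of a fixed $R_\tau$. For $\epsilon=1$ the left-hand side becomes $\frac{1}{\al}\sum_{|\rho|=s}a_\rho\sum_p m_p(\rho)(p-1)R_{(\rho\setminus p)\cup(p-1)}$; setting $\tau=(\rho\setminus p)\cup(p-1)$ and $q=p-1$ inverts the correspondence to $\rho=\tau_{\uparrow(q)}$, with $m_{q+1}(\tau_{\uparrow(q)})=m_{q+1}(\tau)+1$, and summing over the distinct parts $q$ of $\tau$ gives $x_\tau=\frac{1}{\al}\sum_{q\,\mathrm{part}\,\mathrm{of}\,\tau}q\,(m_{q+1}(\tau)+1)\,a_{\tau_{\uparrow(q)}}$, which is~(i). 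For $\epsilon=0$ only $p=2$ survives, so the left-hand side is $\frac{1}{\al}\sum_{|\rho|=s}a_\rho\,m_2(\rho)\,R_{\rho\setminus 2}$; the unique $\rho$ with $\rho\setminus 2=\tau$ is $\rho=\tau\cup 2$, and $m_2(\tau\cup 2)=m_2(\tau)+1$, whence $x_\tau=\frac{1}{\al}(m_2(\tau)+1)a_{\tau\cup 2}$, which is~(ii). As in Section~7, I would note that none of the ingredients (Theorem~\ref{th4}, Proposition~\ref{pr4}, the identities (5.7)--(5.8)) uses $\al+\beta=1$, so the whole computation stays valid with $\beta$ an independent parameter.
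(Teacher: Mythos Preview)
Your proof is correct and follows essentially the same route as the paper: apply Proposition~\ref{pr4} to isolate the $|\sigma|=k-2$ contribution, then evaluate the residual one-part sum via the identities (5.7)--(5.8), observing that (5.8) degenerates to $1$ at $n=1$ (i.e.\ $p=2$). The only difference is cosmetic: the paper replaces $x_i^{\,m}$ by $M_{m+\epsilon}$ directly in the polynomial identity of Proposition~\ref{pr4} (matching coefficients of powers of $x_i$), whereas you make this substitution explicit by passing twice through Theorem~\ref{th4}; the extra step cancels and you land on the same formula.
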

\begin{proof} In the left-hand sides, applying Proposition~\ref{pr4}, we are led to evaluate
\[\frac{1}{\al} \sum_{p \,\mathrm{part}\, \mathrm{of}\, \rho} m_p(\rho)(p-1) \, R_{\rho \setminus p} \, \sum_{k=2}^{p}
M_{p-k+\epsilon} \sum_{|\nu|=k-2} \binom{k-p-1}{l(\nu)}
u_\nu R_{\nu},\]
with $\epsilon=1,0$. Applying (5.7) and (5.8) the second sum is $R_{p-1}$ for $\epsilon=1$ and $\delta_{p2}$ for $\epsilon=0$.
\end{proof}

\begin{theo}~\label{th7}
The term of weight $r+1$ in $K_r$ is $\al^r R_{r+1}$.
\end{theo}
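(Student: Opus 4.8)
The plan is to compute the highest-weight term of $K_r$ by induction on $r$, using the two master equations (10.1)--(10.2) together with the structural information provided by Proposition~\ref{pr7}. By Proposition~\ref{pr6}(ii) the highest possible weight in $K_r$ is $r+1$, so I want to determine the coefficient $a_\rho^{(r)}$ for the partitions $\rho$ with $|\rho|=r+1$, and then show that only $\rho=(r+1)$ survives with coefficient $\al^r$. First I would isolate, in each of (10.1) and (10.2), the contribution of the top-weight unknowns $\{a_\rho^{(r)}:|\rho|=r+1\}$; by the analysis behind Proposition~\ref{pr4} these contributions come only from the ``extremal'' piece $|\sigma|=k-2$ of the expansion in Theorem~\ref{th6}, which is precisely what Proposition~\ref{pr7} evaluates. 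Concretely, taking $s=r+1$ in Proposition~\ref{pr7}, equation (10.1) (the case $\epsilon=0$) forces $1/\al\,(m_2(\tau)+1)\,a_{\tau\cup 2}^{(r)}$ plus lower-weight garbage to vanish for every $\tau$ with $|\tau|=r-1$, hence $a_\rho^{(r)}=0$ for every $\rho$ of weight $r+1$ having a part equal to $2$, modulo contributions of strictly smaller weight that will be absorbed into lower-order terms of $K_r$.

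Next I would feed this into equation (10.2) with $\epsilon=1$ and $s=r+1$. Proposition~\ref{pr7}(i) tells us that the weight-$r$ part of the left side equals $\sum_{|\tau|=r}x_\tau R_\tau$ with $x_\tau=1/\al\sum_{p\,\mathrm{part}\,\mathrm{of}\,\tau}p\,(m_{p+1}(\tau)+1)\,a_{\tau_{\uparrow(p)}}^{(r)}$, while the right side at weight $r$ is $r\sum_{|\tau|=r}a_\tau^{(r-1)}R_\tau$, and by the inductive hypothesis $a_\tau^{(r-1)}$ is supported, at top weight, only on $\tau=(r)$ with value $\al^{r-1}$. So for $\tau=(r)$ we get $1/\al\cdot r\cdot a_{(r+1)}^{(r)}=r\,\al^{r-1}$, giving $a_{(r+1)}^{(r)}=\al^r$; and for every other $\tau$ of weight $r$ (necessarily having at least two parts) the right side vanishes at top weight, which combined with the conclusion of the previous paragraph (that $a_\rho^{(r)}=0$ for $\rho$ of weight $r+1$ with a part $2$) should force all remaining top-weight coefficients $a_\rho^{(r)}$ with $|\rho|=r+1$, $l(\rho)\ge 2$, to be zero. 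I would set this up as a downward induction on the length $l(\rho)$, or equivalently on the number of parts, peeling off partitions $\rho$ of weight $r+1$: each $\rho=\tau_{\uparrow(p)}$ is hit by the equation indexed by $\tau=\rho\setminus(p+1)\cup p$, and since every $\rho$ with $l(\rho)\ge 2$ is of the form $\tau_{\uparrow(p)}$ for suitable $\tau$ and $p$ with $\tau$ having at least as many parts, the vanishing propagates.

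I would also need to check the base case $r=2$, where Section 7 already gives $\vartheta^\la_2=\al^2R_3+\al\beta R_2$, so that the weight-$3$ term is $\al^2R_3=\al^{r}R_{r+1}$ as claimed. The main obstacle I anticipate is bookkeeping the ``lower-weight garbage'': equations (10.1)--(10.2) are genuine equalities of polynomials in the $R_i$, and the weight-$(r+1)$ (resp.\ weight-$r$) part of each side receives contributions not only from the top-weight unknowns via Proposition~\ref{pr7} but also, in principle, from lower-weight unknowns $a_\rho^{(r)}$ with $|\rho|\le r$ acted on by the non-extremal pieces of Theorem~\ref{th6} that raise weight. I must argue that Theorem~\ref{th6}, and more precisely the shape of $b_{k,\sigma}(\rho)$ in (8.1) (where $|\sigma|\le k-2$ and the exponent of $x_i$ is $|\rho|-k$), cannot increase the weight $|\rho|$ of a free-cumulant monomial after the substitution of moments by free cumulants via (5.3) — i.e.\ that the operation $R_\rho\mapsto R_\rho(\la^{(i)})-R_\rho(\la)$ followed by the character-sum identity of Theorem~\ref{th4} is weight-non-increasing, with the top-weight part being exactly the extremal term of Proposition~\ref{pr4}. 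Granting that (which is really the content of Proposition~\ref{pr6}(ii) and its proof), the top-weight parts of (10.1)--(10.2) decouple cleanly from everything of lower weight, and the argument above closes.
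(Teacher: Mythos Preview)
Your approach is essentially the paper's: isolate the highest-weight pieces of (10.1) and (10.2) via Proposition~\ref{pr7}, use (10.1) to kill the $a_\rho^{(r)}$ with $|\rho|=r+1$ and a part equal to $2$, and use (10.2) together with the inductive hypothesis to get $a_{(r+1)}^{(r)}=\al^r$ from $\tau=(r)$. Your worry about ``lower-weight garbage'' is also handled exactly as in the paper: since each term $M_{|\rho|-k+\epsilon}R_\sigma$ has weight at most $|\rho|-2+\epsilon$, only $|\rho|=r+1$ and $|\sigma|=k-2$ contribute to the top-weight part of (10.1) (weight $r-1$) and (10.2) (weight $r$).

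The one genuine gap is your secondary induction. You propose ``downward induction on the length $l(\rho)$'', but the map $\tau\mapsto\tau_{\uparrow(p)}$ \emph{preserves} length, so the equations coming from Proposition~\ref{pr7}(i) for partitions $\tau$ of length $d$ relate only unknowns $a_\rho^{(r)}$ with $l(\rho)=d$; nothing propagates between length classes, and your induction does not progress. What the paper does instead is induct on the \emph{lowest part} of $\rho$. The base case (lowest part $2$) is exactly your conclusion from (10.1). For the step, suppose $a_\nu^{(r)}=0$ for every $\nu$ of weight $r+1$ with lowest part $q-1$, and take $\rho$ with lowest part $q$. Set $\tau=\rho_{\downarrow(q)}$, which has lowest part $q-1$. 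In the relation
\[
\sum_{p\ \mathrm{part\ of}\ \tau} p\,(m_{p+1}(\tau)+1)\,a_{\tau_{\uparrow(p)}}^{(r)}=0,
\]
the term $p=q-1$ gives $(q-1)(m_q(\tau)+1)\,a_\rho^{(r)}$, while for every other part $p\ge q$ the partition $\tau_{\uparrow(p)}$ still contains the part $q-1$, hence has lowest part $q-1$ and its coefficient vanishes by the inductive hypothesis. This forces $a_\rho^{(r)}=0$. Replace your length induction by this lowest-part induction and the argument closes.
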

\begin{proof}
The statement is true for $r=2$. We prove it by induction on $r$. By Proposition~\ref{pr6} the highest weight of $K_r$ is $|\rho|=r+1$.
The term of weight $r$ in the left-hand side of (10.2) is produced by some term $M_{|\rho|-k+1} R_\sigma$. Hence $r\le |\rho|-k+1+k-2= |\rho|-1$, which implies $|\rho|=r+1$ and $|\sigma|=k-2$. Taking terms of highest weight in both sides, we obtain
\begin{equation*}
\sum_{|\rho|=r+1} a_\rho^{(r)} \sum_{k=2l(\rho)}^{r+1}M_{r-k+2} \,
\sum_{|\sigma|= k-2 } b_{k,\sigma}(\rho) R_\sigma= r \,\al^{r-1} R_{r}.
\end{equation*}
Applying Proposition~\ref{pr7}(i) with $s=r+1$ for $\tau=(r)$, we obtain $r \,\al^{r-1}=r\, a_{(r+1)}^{(r)}/\al$, hence $a_{(r+1)}^{(r)}=\al^{r}$.

It remains to prove that $a_{\rho}^{(r)}=0$ for $|\rho|=r+1$ and $l(\rho)\ge 2$. This is done by a second induction on the lowest part of $\rho$. Firstly we treat the case of a lowest part $2$. Consider the term of highest weight in (10.1). By the same argument as above, it has weight $r-1$ and writes
\[\sum_{|\rho|=r+1} a_\rho^{(r)} \sum_{k=2l(\rho)}^{r+1}M_{r-k+1} \,
\sum_{|\sigma|= k-2 } b_{k,\sigma}(\rho) R_\sigma= 0.\]
Applying Proposition~\ref{pr7}(ii) with $s=r+1$ for $\tau=\rho\setminus 2$, we obtain $a_\rho^{(r+1)}=0$.

Then our inductive hypothesis assumes $a_{\nu}^{(r)}=0$ for any $\nu$ with lowest part $q-1$. We fix a partition $\rho$ with lowest part $q$, so that $\tau=\rho_{\downarrow (q)}$ has lowest part $q-1$.
Since $\tau_{\uparrow (q-1)}=\rho$, applying Proposition~\ref{pr7}(i) for $\tau$ we have
\[(q-1)(m_{q}(\tau)+1)\,a_{\rho}^{(r)}+\sum_{\begin{subarray}{c}p \,\mathrm{part}\, \mathrm{of}\, \tau\\ p\neq q-1\end{subarray}} p\, (m_{p+1}(\tau)+1)\,a_{\tau_{\uparrow (p)}}^{(r)}=0.\]
In the sum, all $a_{\tau_{\uparrow (p)}}^{(r)}$ vanish because $\tau_{\uparrow (p)}$ has lowest part $q-1$. Hence $a_{\rho}^{(r)}=0$.
\end{proof}

For a clearer display we now write
\[\mathcal{R}_\rho=\prod_{i\ge 2} 
((i-1)R_i)^{m_i(\rho)}/m_i(\rho)!.\]
\begin{theo}~\label{th8}
The term of weight $r$ in $K_r$ is 
\[\al^{r-1}\beta  \, \frac{r}{2}\,\sum_{|\rho|=r} 
(l(\rho)-1)!\, \mathcal{R}_\rho.\]
\end{theo}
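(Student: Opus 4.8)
The plan is to follow the same two-induction scheme that proved Theorem~\ref{th7}, now working one weight lower. By Theorem~\ref{th7} we already know the weight-$(r+1)$ part of $K_r$ is $\al^r R_{r+1}$; write $K_r = \al^r R_{r+1} + \sum_{|\rho|=r} a_\rho^{(r)} R_\rho + (\text{lower weight})$. First I would extract the weight-$(r-1)$ part of equation (10.2). On the right-hand side this is $r$ times the weight-$(r-1)$ part of $K_{r-1}$, which by the inductive hypothesis is $r\,\al^{r-2}\beta\,\tfrac{r-1}{2}\sum_{|\tau|=r-1}(l(\tau)-1)!\,\mathcal{R}_\tau$. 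On the left-hand side, a term $M_{|\rho|-k+1}R_\sigma$ of the expansion contributes weight $|\rho|-1$ only when $|\sigma|=k-2$; so the weight-$(r-1)$ part gets a contribution from the weight-$(r+1)$ coefficient $\al^r R_{r+1}$ (via the already-known $b_{k,\sigma}$ data) \emph{and} from the unknown weight-$r$ coefficients $a_\rho^{(r)}$, now entering through Proposition~\ref{pr7}(i) with $s=r$.

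Concretely, Proposition~\ref{pr7}(i) with $s=r$, $\epsilon=1$ says the $a_\rho^{(r)}$ contribute to the coefficient $x_\tau$ of $R_\tau$ ($|\tau|=r-1$) exactly the amount $\tfrac1\al\sum_{p\,\text{part of}\,\tau} p\,(m_{p+1}(\tau)+1)\,a^{(r)}_{\tau_{\uparrow(p)}}$. The known piece $\al^r R_{r+1}$ of $K_r$ also feeds into $x_\tau$: one applies Proposition~\ref{pr4} (the $|\sigma|=k-2$ part of (8.1)) to $\rho=(r+1)$, picks the term $M_{(r+1)-k+1}R_\sigma$ with $|\sigma|=k-2$, expands $M_m$ in free cumulants by (5.3), and reads off the $R_\tau$-coefficient for each $|\tau|=r-1$. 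This yields a triangular linear recursion: knowing all $a^{(r)}_\nu$ with $l(\nu)$ strictly smaller (or with smaller lowest part, mirroring the proof of Theorem~\ref{th7}), one solves for $a^{(r)}_\rho$. I would then \emph{guess} the closed form — the claimed $\al^{r-1}\beta\,\tfrac r2\,(l(\rho)-1)!$ times the multinomial normalization in $\mathcal{R}_\rho$ — and verify it satisfies this recursion, using the identity $\sum_{p}p\,(m_{p+1}(\tau)+1)\cdot$(coefficient ratio) telescoping correctly; the factorials $(l(\rho)-1)!$ and the $\mathcal{R}_\rho$ normalization are designed precisely so that the $\tau_{\uparrow(p)}\mapsto\tau$ combinatorics collapses to the single scalar $r/2$.

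The bookkeeping of the ``known'' contribution from $\al^r R_{r+1}$ is the main obstacle. One must carefully apply Theorem~\ref{th6} to a single part $\rho=(r+1)$, track the descending powers of $x_i$ down to the $x_i^{n-4}$-type terms (the explicit expansion displayed right after Theorem~\ref{th6} already gives $R_n(\la^{(i)})-R_n(\la)=(n-1)/\al\,x_i^{n-2}-\binom{n-1}{2}\beta/\al^2 x_i^{n-3}+\cdots$), multiply by $c_i(\la)$, sum over $i$ using Theorem~\ref{th4} to turn $\sum c_i x_i^m$ into $M_m$, and then convert $M_m$ to free cumulants by (5.3). The $\beta$ in the final answer comes directly from the $-\binom{n-1}{2}\beta/\al^2 x_i^{n-3}$ term with $n=r+1$, which after the $M_m\to R$ conversion produces the weight-$(r-1)$ monomials $R_\tau$; the coefficient $r/2$ and the shape $(l(\rho)-1)!\,\mathcal{R}_\rho$ should emerge once one combines this with the right-hand side $r\,K_{r-1}$ term and solves the recursion. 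I would organize the verification so that the $\al$-power and $\beta$-power are separated out immediately (as the algorithm's step (ii) remarks allow $\al,\beta$ to be treated independently), reducing the problem to a purely combinatorial identity about the coefficients $(l(\rho)-1)!$, which can be checked by comparing the action of the operator in Proposition~\ref{pr7}(i) on $\mathcal{R}_\rho$ against the known source term.
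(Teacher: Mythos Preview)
Your overall induction scheme matches the paper's, but there are two concrete gaps.

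First, your identification of where the known piece $\al^r R_{r+1}$ feeds into weight $r-1$ is off. A term $M_{|\rho|-k+1}R_\sigma$ has weight $|\rho|-k+1+|\sigma|$, so for $|\rho|=r+1$ to contribute weight $r-1$ one needs $|\sigma|=k-3$, not $|\sigma|=k-2$. Proposition~\ref{pr4} is therefore the wrong tool here (it only records the $|\sigma|=k-2$ piece, which for $\rho=(r+1)$ lands in weight $r$ and was already consumed in Theorem~\ref{th7}). One has to go back to Theorem~\ref{th6} itself for $\rho=(r+1)$: the relevant coefficient is
\[
b_{k,\sigma}((r+1))=-\frac{\beta}{\al^2}\,\frac{r}{2}\,(r-k+2)\binom{k-3-r}{l(\sigma)}u_\sigma\qquad(|\sigma|=k-3),
\]
and the resulting sum $\sum_k M_{r-k+2}\sum_{|\sigma|=k-3}b_{k,\sigma}R_\sigma$ is \emph{not} simplified by (5.3) alone. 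The paper closes it using the special identity (5.10), which collapses everything to $\sum_{|\tau|=r-1}w_\tau\,\tfrac{u_\tau}{l(\tau)}R_\tau$. Without (5.10) (and its companion (5.9) below) you cannot isolate the $R_\tau$-coefficient in closed form.

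Second, equation (10.2) by itself is not enough to initialize the second induction on the lowest part of $\rho$. For the base case (lowest part $2$, i.e.\ $\rho=\tau\cup 2$) the recursion from (10.2) gives you nothing: when the lowest part of $\tau$ is $1$ there is no such $\tau$ since $R_1=0$, and the relation from Proposition~\ref{pr7}(i) does not reach partitions of the form $\tau\cup 2$ from below. The paper resolves this exactly as in Theorem~\ref{th7}, by extracting the weight-$(r-2)$ part of equation (10.1) and applying Proposition~\ref{pr7}(ii) together with identity (5.9); this is what produces equation (10.4),
\[
(m_2(\tau)+1)\,a^{(r)}_{\tau\cup 2}=\frac{r}{2}\,\al^{r-1}\beta\,v_\tau u_\tau,
\]
which anchors the induction. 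Once (10.3) and (10.4) are both in hand, the verification of the closed form reduces to the elementary identity $\sum_p p\,(m_{p+1}(\tau)+1)\,\tfrac{u_{\tau_{\uparrow(p)}}}{l(\tau)}v_{\tau_{\uparrow(p)}}=\tfrac{u_\tau}{l(\tau)}(w_\tau+|\tau|v_\tau)$, exactly as you anticipated.
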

\begin{proof} It is done by induction on $r$. We assume that the property is true for $K_{r-1}$. For $|\rho|=r$ we have to prove 
\[a_\rho^{(r)}=\al^{r-1}\beta  \, \frac{r}{2}\,\frac{u_\rho}{l(\rho)} v_\rho.\] 

Firstly, let us compare the terms of weight $r-1$ on both sides of (10.2). On the left-hand side such a term is produced by $M_{|\rho|-k+1} R_\sigma$. Hence $r-1\le |\rho|-1$ and there are two possibilities, either $|\rho|=r+1, |\sigma|=k-3$ or $|\rho|=r, |\sigma|=k-2$.

(i) The contribution of the case $|\rho|=r+1, |\sigma|=k-3$ to the left-hand side of (10.2) is
\[ \al^r \sum_{k=3}^{r}M_{r-k+2} \,
\sum_{|\sigma|= k-3 } b_{k,\sigma}((r+1)) R_\sigma.\]
But by Theorem~\ref{th6} for $|\sigma|= k-3$ we have
\[b_{k,\sigma}((r+1))=
- \frac{\beta}{\al^2}\, \frac{r}{2} \, (r-k+2) \binom{k-3-r}{l(\sigma)} u_\sigma.\]
Applying (5.10), the contribution is therefore
\[-\al^{r-2}\beta\,\frac{r}{2} \,
\sum_{k=3}^{r}(k-1)\, M_{k-1} \,
\sum_{|\sigma|= r-k } \binom{-k}{l(\sigma)} u_\sigma R_\sigma=
-\al^{r-2}\beta\,\frac{r}{2} \,\sum_{|\tau|=r-1} \, w_\tau \frac{u_\tau}{l(\tau)}R_\tau.\]

(ii) The contribution of the case $|\rho|=r, |\sigma|=k-2$ to the left-hand side of (10.2) is
\[\sum_{|\rho|=r} a_\rho^{(r)} \sum_{k=2l(\rho)}^{r}M_{r-k+1} \,
\sum_{|\sigma|= k-2 } b_{k,\sigma}(\rho) R_\sigma.\]
Applying Proposition~\ref{pr7}(i) with $s=r$, this becomes
\[1/\al \sum_{|\tau|=r-1} \sum_{p \,\mathrm{part}\, \mathrm{of}\, \tau} p\, (m_{p+1}(\tau)+1)\,a_{\tau_{\uparrow (p)}}^{(r)} R_\tau.\]
Finally equating the coefficients of $R_\tau$ with $|\tau|=r-1$ on both sides of (10.2), and applying the inductive hypothesis, we obtain
\begin{equation}
\sum_{p \,\mathrm{part}\, \mathrm{of}\, \tau} p\, (m_{p+1}(\tau)+1)\,a_{\tau_{\uparrow (p)}}^{(r)} =
\al^{r-1}\beta \, \frac{r}{2}\, \frac{u_\tau}{l(\tau)} \Big( w_\tau + (r-1)\,v_\tau \Big).
\end{equation}

Secondly, let us compare the terms of weight $r-2$ on both sides of (10.1). On the left-hand side such a term is produced by $M_{|\rho|-k} R_\sigma$. Hence $r-2\le |\rho|-2$ and again two possibilities, either $|\rho|=r+1, |\sigma|=k-3$ or $|\rho|=r, |\sigma|=k-2$.

By the same argument as above, applying (5.9), the contribution of the case $|\rho|=r+1, |\sigma|=k-3$ to the left-hand side of (10.1) is
\[-\frac{r}{2} \, \al^{r-2}\beta
\sum_{k=2}^{r}(k-1)\, M_{k-2} \,
\sum_{|\sigma|= r-k } \binom{-k}{l(\sigma)} u_\sigma R_\sigma=
-\frac{r}{2} \, \al^{r-2}\beta \sum_{|\tau|=r-2} \, v_\tau u_\tau R_\tau.\]
The contribution of the case $|\rho|=r, |\sigma|=k-2$ to the left-hand side of (10.1) is
\[\sum_{|\rho|=r} a_\rho^{(r)} \sum_{k=2l(\rho)}^{r}M_{r-k} \,
\sum_{|\sigma|= k-2 } b_{k,\sigma}(\rho) R_\sigma.\]
Applying Proposition~\ref{pr7}(ii) with $s=r$, this contribution becomes 
\[1/\al \, \sum_{|\tau|=r-2} \, (m_2(\tau)+1) \, a_{\tau\cup 2}^{(r)}\, R_\tau.\]
Finally equating the coefficients of $R_\tau$ with $|\tau|=r-2$ on both sides of (10.1), we obtain
\begin{equation}
(m_2(\tau)+1)\, a_{\tau\cup 2}^{(r)} -\frac{r}{2} \, \al^{r-1}\beta \, v_\tau u_\tau=0.
\end{equation}

We are now in a position to compute $a_{\rho}^{(r)}$ for $|\rho|=r$. This is done by a second induction on the lowest part of $\rho$. Firstly we treat the case of a lowest part $2$, say $\rho=\tau\cup 2$. The value of $a_{\rho}^{(r)}$ is then given by (10.4), which yields
\[a_{\rho}^{(r)} = \al^{r-1}\beta \,\frac{r}{2} \,\frac{u_\tau}{m_2(\tau)+1}\, v_\tau =\al^{r-1}\beta  \, \frac{r}{2}\,\frac{u_\rho}{l(\rho)} v_\rho.\]
Secondly our inductive hypothesis assumes that $a_{\nu}^{(r)}$ is known for any $\nu$ with lowest part $q-1$. We fix a partition $\rho$ with lowest part $q$, so that $\tau=\rho_{\downarrow (q)}$ has lowest part $q-1$. Since $\tau_{\uparrow (q-1)}=\rho$, (10.3) may be written
\[(q-1)(m_{q}(\tau)+1)\,a_{\rho}^{(r)}=
\al^{r-1}\beta \, \frac{r}{2}\, \frac{u_\tau}{l(\tau)} \Big( w_\tau + (r-1)\,
v_\tau \Big)-\sum_{
\begin{subarray}{c}p \,\mathrm{part}\, \mathrm{of}\, \tau\\ p\neq q-1\end{subarray}} p\, (m_{p+1}(\tau)+1)\,a_{\tau_{\uparrow (p)}}^{(r)}.\]
In the right-hand side, all $a_{\tau_{\uparrow (p)}}^{(r)}$ are known because $\tau_{\uparrow (p)}$ has lowest part $q-1$. Thus induction will be completed if we have the identity
\begin{equation*}
\sum_{p\,\mathrm{part}\, \mathrm{of}\, \tau} p\,(m_{p+1}(\tau)+1)\,\frac{u_{\tau_{\uparrow (p)}}}{l(\tau_{\uparrow (p)})} v_{\tau_{\uparrow (p)}}=
\frac{u_\tau}{l(\tau)} \Big( w_\tau + |\tau|v_\tau \Big).
\end{equation*}
But $l(\tau_{\uparrow (p)})=l(\tau)$, $u_{\tau_{\uparrow (p)}}/u_\tau=m_p(\tau)/(m_{p+1}(\tau)+1)$, and $v_{\tau_{\uparrow (p)}}/v_\tau=p/(p-1)$. Thus it amounts to check the trivial equality
\[\sum_{i}i(m_{i+1}+1)\frac{m_i}{m_{i+1}+1}\frac{i}{i-1}=\sum_{i} im_i\Big(\frac{1}{i-1}+1\Big).\]
\end{proof}

\section{More conjectures}

For $r\le 20$ our computations support the following conjecture.
\begin{conj}\label{conj2}
We have
\[K_r = \sum_{\begin{subarray}{c}0\le j \le i \le r-1 \\ 2i-j \le r-1\end{subarray}}
 \al^{r-i} \beta^j \sum_{|\rho|=r-2i+j+1}
a_{ij}(\rho)  \,  R_\rho,\]
where the coefficients $a_{ij}(\rho)$ are nonnegative integers.
\end{conj}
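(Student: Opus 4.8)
The plan is to establish Conjecture~\ref{conj2} by strengthening the induction already carried out in Sections 9 and 10, tracking not merely the weight of the terms in $K_r$ but the full $(\al,\beta)$-graded structure. Concretely, I would set up the induction on $r$ with the inductive hypothesis being precisely the claimed shape: that $K_r$ is a sum of monomials $\al^{r-i}\beta^j R_\rho$ with $|\rho|=r-2i+j+1$, $0\le j\le i\le r-1$, $2i-j\le r-1$, and with nonnegative integer coefficients $a_{ij}(\rho)$. The base cases $K_2,\ldots,K_6$ are exhibited in the introduction and visibly have this form, so the work is entirely in the inductive step, which must produce $K_r$ from $K_{r-1}$ (and, through Theorem~\ref{th6}, from data about adding a node).

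The first task is a bookkeeping lemma on degrees. Theorem~\ref{th6}, expanded as in (8.1), expresses $R_\rho(\la^{(i)})-R_\rho(\la)$ as a sum $\sum_k x_i^{|\rho|-k}\sum_{|\sigma|\le k-2} b_{k,\sigma}(\rho)R_\sigma$, and Proposition~\ref{pr2}/\ref{pr3} make explicit that $b_{k,\sigma}(\rho)$ is a polynomial in $1/\al$ and $\beta$ whose monomials $\al^{-(r+s)}\beta^s$ obey the precise constraints $r\ge 1$, $s\ge 0$, $2r+s\le k-|\sigma|$ (summed over the parts of $\rho$). Combining this with the substitution $M_k\mapsto$ (free cumulants) via (5.3) — where each $M_k$ contributes only monomials $R_\mu$ with $|\mu|=k$ and integer coefficients — one sees that each pass through equations (10.1)--(10.2) shifts the $(i,j)$-bidegree and the $R$-weight in a controlled way. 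I would prove, by a direct but careful comparison of the two sides of (10.1) and (10.2), that if $K_{r-1}$ has the claimed shape then so does $K_r$, with the coefficients $a_{ij}(\rho)$ for $K_r$ expressed as explicit $\mathbf{Z}_{\ge 0}$-linear combinations of the $a_{i'j'}(\rho')$ for $K_{r-1}$. The uniqueness of the solution of the linear system — which, per the remark after step (iv) in Section 9, is a reformulation of Theorem~\ref{th1} — guarantees that solving the system cannot reintroduce denominators or negative signs beyond what the recursion dictates.

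The integrality half of the statement I expect to be manageable: it follows once one checks that at every step the linear system to be inverted is triangular with unit (or at least monic-in-$\al$, invertible) diagonal, which is exactly the content of Propositions~\ref{pr4} and~\ref{pr7} — the crucial diagonal coefficient appearing there is $1/\al$ times a positive integer ($p(m_{p+1}+1)$ or $m_2+1$), so back-substitution stays within $\mathbf{Z}[\al,\beta]$ after clearing the overall powers of $\al$. The degree bounds $0\le j\le i\le r-1$, $2i-j\le r-1$, $|\rho|=r-2i+j+1$ should drop out of the same bookkeeping, mirroring the proofs of Proposition~\ref{pr6}, Theorem~\ref{th7} and Theorem~\ref{th8}, which already pin down the extreme strata $i=0$ and the $\beta$-linear stratum.

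\emph{The main obstacle is positivity} — showing $a_{ij}(\rho)\ge 0$. The recursion coming from (10.1)--(10.2) is not manifestly positive: the term of weight $r-1$ in (10.2) receives a \emph{negative} contribution from the $|\rho|=r+1$, $|\sigma|=k-3$ piece (one sees the leading minus sign already in the proof of Theorem~\ref{th8}, via identities (5.9)--(5.10)), and this must be shown to be dominated by the positive contributions coming from the $a^{(r-1)}_\tau$ and from the lower-order $a^{(r)}_\nu$ that feed in through Proposition~\ref{pr7}(i). So a naive sign-chase fails. The realistic route is to find, as the paper's introduction hints, a \emph{combinatorial model}: exhibit a finite set $\mathcal{F}_{ij}(\rho)$ — presumably some species of maps, factorizations of permutations, or decorated bipartite graphs generalizing F\'eray's $\al=1$ construction~\cite{F} — whose cardinality is $a_{ij}(\rho)$, and then verify that the recursion above is the one satisfied by $\#\mathcal{F}_{ij}(\rho)$ under a bijective/sign-reversing-involution argument that cancels the negative term against part of the positive terms. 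Absent such a model, one could attempt a weaker result: prove positivity of the two extreme strata ($i=0$: Theorem~\ref{th7}; the next stratum: a refinement of Theorem~\ref{th8}) and of $a_{ij}(\rho)$ for $\rho$ a single row, and leave the general case as the conjecture it is. I would be candid in the writeup that, with the tools developed in this paper, only the integrality statement and the degree constraints are within reach unconditionally, and that full positivity is the genuinely open point — which is, of course, exactly why it is stated as a conjecture.
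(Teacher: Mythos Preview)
The paper offers \emph{no proof} of this statement: it is explicitly a conjecture, supported only by computer calculations for $r\le 20$. So there is no argument in the paper to compare against; the question is whether your proposal closes the gap, and it does not.

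Your plan has a genuine obstruction already at the integrality step, well before positivity. You assert that the linear system is ``triangular with unit (or at least monic-in-$\al$, invertible) diagonal'' so that back-substitution stays in $\mathbf{Z}[\al,\beta]$. But look at how the recursion is actually solved in the proofs of Theorems~\ref{th7} and~\ref{th8}: the induction on the lowest part of $\rho$ leads to equations of the form
\[(q-1)(m_q(\tau)+1)\,a_\rho^{(r)} = (\text{known terms}),\]
and one must \emph{divide} by the positive integer $(q-1)(m_q(\tau)+1)$. Nothing in the mechanism guarantees that the right-hand side is divisible by this integer; in the cases proved in the paper this divisibility is checked by hand via explicit identities (the last displayed equality in the proof of Theorem~\ref{th8}). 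For the general stratum no such identity is available, and the paper is explicit that the coefficients of $K_\mu$ are only known to lie in $\mathbf{Q}(\al)$---see the paragraph preceding Conjecture~1.1, which already states polynomiality and integrality in $\al$ as an open conjecture. Your claim that integrality is ``manageable'' therefore presumes exactly what is conjectured.

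The same applies to the degree bounds and the $(\al,\beta)$-homogeneity $|\rho|=r-2i+j+1$: these are part of the conjectural shape, not consequences of Proposition~\ref{pr6} (which controls only the overall weight, not the bigrading). Your final paragraph is the honest one: with the tools of the paper, only the extremal strata $K_r^{(0,0)}$ and $K_r^{(1,1)}$ are settled (Theorems~\ref{th7}--\ref{th8}); everything else---integrality, the bigraded structure, and positivity---remains open.
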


From now on, we denote $K_r^{(i,j)}$ the coefficient of $\al^{r-i} \beta^j$ in $K_r$, with weight $r-2i+j+1$. Theorems~\ref{th7} and~\ref{th8} give
\[K_r^{(0,0)}=R_{r+1},\quad \quad K_r^{(1,1)}=
\frac{r}{2}\,\sum_{|\rho|=r} 
(l(\rho)-1)!\, \mathcal{R}_\rho.\]
We conjecture that all components $K_r^{(i,j)}$ but one may be described in a unified way, \textit{independent of} $r$.
\begin{conj}\label{conj3}
For any $(i,j)\neq (0,0)$ there exists an inhomogeneous symmetric function $f_{ij}$ having maximal degree $4i-2j-2$ such that
\[K_r^{(i,j)} = r\, \sum_{|\rho|=r-2i+j+1} 
(l(\rho)+2i-j-2)!\, f_{ij}(\rho)  \,  \mathcal{R}_\rho,\]
where $f_{ij}(\rho)$ denotes the value of $f_{ij}$ at the integral vector $\rho$. This symmetric function is $\mathrm{independent}$ of $r$.
\end{conj}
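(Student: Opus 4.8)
The plan is to prove the claimed identity by a refinement of the inductive arguments of Theorems~\ref{th7} and~\ref{th8}, which already establish the analogues of the statement for the top two components: $(i,j)=(0,0)$, where $K_r^{(0,0)}=R_{r+1}$ and no $f$ is needed, and $(i,j)=(1,1)$, where $f_{11}\equiv 1/2$ has degree $0=4\cdot1-2\cdot1-2$. One runs the algorithm of Section~9 for $\mu=(r)$, keeping $\beta$ an independent parameter (legitimate away from step~(ii)), but now extracts from equations (10.1)--(10.2) the homogeneous component of $\al$-degree $r-i$, $\beta$-degree $j$ and $R$-weight $r-2i+j+1$ (respectively $r-2i+j$, for (10.1)), rather than only the two components of highest weight.

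First I would organise a double induction: an outer induction on $r$, and for fixed $r$ an inner induction on the ``depth'' $2i-j$ of the component (equivalently, on decreasing $R$-weight). Expanding the left-hand sides of (10.1)--(10.2) by Theorem~\ref{th6}, converting the moments $M_k$ to free cumulants by (5.3), and isolating the leading contribution $|\sigma|=k-2$ by Proposition~\ref{pr4} and the auxiliary Proposition~\ref{pr7}, one obtains — exactly as in the passage from (10.1)--(10.2) to identities (10.3)--(10.4) — a recursion that expresses the $\al^{r-i}\beta^j$-coefficient of a partition $\rho$ with $|\rho|=r-2i+j+1$ in terms of: the component $(i,j)$ of $K_{r-1}$ (coming from the right-hand side $r\,K_{r-1}$), which by the outer induction already has the conjectured shape \emph{with the same $f_{ij}$}; and the higher components $K^{(i',j')}_r$, $2i'-j'<2i-j$, available from the inner induction. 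The residual internal freedom is resolved, as in Theorem~\ref{th8}, by a further induction on the smallest part of $\rho$, using (10.1) (via Proposition~\ref{pr7}(ii)) for partitions containing a part $2$ and the maps $\rho\mapsto\rho_{\uparrow(p)}$ (via Proposition~\ref{pr7}(i)) for the rest.

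The crux — and what I expect to be the genuinely hard point — is to show that, after dividing out the overall factor $r$ and the factorial $(l(\rho)+2i-j-2)!$, the solution of this recursion is the value at $\rho$ of a fixed symmetric polynomial of the parts, \emph{independent of $r$}. Concretely one must check that the arithmetic weights produced by Propositions~\ref{pr4} and~\ref{pr7} (the multiplicities $m_p(\rho)$, the factors $(p-1)/\al$, the binomials $\binom{k-p-1}{l(\nu)}$, and the $u_\rho,v_\rho,w_\rho$-ratios) combine with the transformation of $\mathcal{R}_\rho$ and of the factorial under $\rho\mapsto\rho\cup 2$ and $\rho\mapsto\rho_{\uparrow(p)}$ so as to reproduce the ansatz. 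For the component $(1,1)$ this collapsed to the trivial identity $\sum_i i(m_{i+1}+1)\tfrac{m_i}{m_{i+1}+1}\tfrac{i}{i-1}=\sum_i im_i(\tfrac{1}{i-1}+1)$; for general $(i,j)$ it becomes a genuinely multivariate identity among symmetric functions of the parts of $\rho$, and proving it — equivalently, showing that the normalized coefficients are polynomial and symmetric in the parts, \emph{uniformly in $r$} — is precisely the content that keeps the statement a conjecture rather than a corollary of Section~9. A natural source of such identities is the $\la$-ring and Lagrange-involution machinery of Sections~4--5, since the factorials $(l(\rho)+\cdots)!$ and the normalised monomials $\mathcal{R}_\rho$ are reminiscent of the binomial identities (4.14)--(4.15) and their specializations (5.7)--(5.10); but I do not see a closed form for a generating function of $\sum_r K_r^{(i,j)}$.

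Finally, the maximal degree $4i-2j-2$ of $f_{ij}$ should be read off by tracking powers of the $\al$-content $x_i$. In Theorem~\ref{th6} the $R_\sigma(\la)$-term of $R_n(\la^{(i)})-R_n(\la)$ with $|\sigma|=t$ carries $x_i^{\,n-2r-s-t}$, so each unit drop of $R$-weight below the diagonal is paid for with a controlled number of powers of $x_i$; once these are absorbed into moments through $\sum_i c_i(\la)x_i^m=M_m(\la)$ (Theorem~\ref{th4}) and re-expanded in free cumulants, a bookkeeping of exponents across the recursion should yield exactly $4i-2j-2$ after the factorials are factored out. As a fallback, should the uniform-in-$r$ step prove intractable by these means, one might instead try to $\al$-deform F\'eray's combinatorial description of the classical $\al=1$ Kerov polynomials, or exploit the shifted-symmetric-function structure of $\vartheta^\la_\mu$ directly.
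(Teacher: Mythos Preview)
The statement you are attempting to prove is Conjecture~\ref{conj3} in the paper; the paper offers no proof, only numerical evidence up to $r=20$. So there is nothing to compare your argument against.

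Your write-up is an honest outline of how one would \emph{try} to extend the proofs of Theorems~\ref{th7} and~\ref{th8} to arbitrary $(i,j)$, and you correctly locate the obstruction: the recursion produced by Proposition~\ref{pr7} and identities (5.7)--(5.10) determines the coefficients $a_\rho^{(r)}$ uniquely, but showing that the normalized solution is the value at $\rho$ of a fixed symmetric function independent of $r$ is exactly the missing ingredient. In Theorems~\ref{th7}--\ref{th8} this step reduces to a one-line identity because the contributions from deeper components are either absent or controlled by a single closed form (via (5.9)--(5.10)); for general $(i,j)$ the lower-weight terms in (10.1)--(10.2) involve \emph{all} components $K_r^{(i',j')}$ with $2i'-j'<2i-j$, each already expressed through an unknown $f_{i'j'}$, and the coupling is through the full Theorem~\ref{th6} expansion rather than just its $|\sigma|=k-2$ piece. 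There is no mechanism in your outline that forces the resulting system to close on symmetric functions of bounded degree, and you say so yourself (``precisely the content that keeps the statement a conjecture''). The degree bound $4i-2j-2$ likewise remains heuristic: the exponent bookkeeping you sketch tracks the drop in $R$-weight, but does not by itself show that the dependence on the individual parts of $\rho$ is polynomial of that degree rather than, say, rational or of unbounded degree.

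In short, your proposal is a lucid diagnosis of why the conjecture is hard and which tools are relevant, but it is not a proof; the gap you flag is the whole problem.
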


This is trivial for $i=j=1$ since we have $f_{11}=1/2$. It is a highly remarkable fact that the conjectured nonnegativity of the coefficients of $K_r^{(i,j)}$ reflects a \textit{much stronger} property, which appears in the expansion of $f_{ij}$ in terms of monomial symmetric functions.
\begin{conj}\label{conj4}
The inhomogeneous symmetric function $f_{ij}$ may be written
\[f_{ij}= \sum_{|\rho| \le 4i-2j-2} c_\rho^{(i,j)} m_\rho,\]
where the coefficients $c_\rho^{(i,j)}$ are positive rational numbers.
\end{conj}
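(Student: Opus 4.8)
The plan is to reduce Conjecture~\ref{conj4} to the positivity of an explicit recursion for the symmetric functions $f_{ij}$, extracted from the two equations (10.1)--(10.2) of the algorithm by isolating the coefficient of $\al^{r-i}\beta^{j}$. As a preliminary one must establish Conjecture~\ref{conj3} itself: writing $K_{r}=\sum_{i,j}\al^{r-i}\beta^{j}K_{r}^{(i,j)}$ with $K_{r}^{(i,j)}\in\mathbf{Q}[R_{2},R_{3},\ldots]$, substitute Theorem~\ref{th6} and the moment--cumulant expansion (5.3) into (10.1)--(10.2). Each contribution produced by adding a node carries at least one factor $1/\al$ (every term in Theorem~\ref{th6} does, since its summation index $r$ is $\ge 1$; hence so does every coefficient $b_{k,\sigma}(\rho)$ of (8.1)), so the power-of-$\al$ grading, refined by the weight $|\rho|$, makes the system triangular and the component $K_{r}^{(i,j)}$ is determined from components of strictly smaller index together with the right-hand side contribution, which is $K_{r-1}^{(i-1,j)}$. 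Within a fixed index one solves for the coefficient of each $R_{\rho}$ ($|\rho|=r-2i+j+1$) by the secondary induction on the lowest part of $\rho$ already used in the proofs of Theorems~\ref{th7} and~\ref{th8}, via Proposition~\ref{pr7} and the identities (5.7)--(5.10). The bookkeeping point is that every explicit occurrence of $r$ then factors through a single factor $r$ and through length-shift factorials $(l(\rho)+2i-j-2)!$; peeling these off produces $f_{ij}$ as a symmetric polynomial in the parts of $\rho$, independent of $r$, of degree $4i-2j-2$.

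Granting this, one reads off a recursion giving $f_{ij}$ in terms of the $f_{i'j'}$ with $i'<i$ (coming from the lower-length part of the coefficients $b_{k,\sigma}$ and from the moment expansion) and $f_{i-1,j}$ (coming from the right-hand side of (10.2)), the base data being the known component $K_{r}^{(0,0)}=R_{r+1}$ (Theorem~\ref{th7}) and $f_{11}=1/2$ (Theorem~\ref{th8}). The second step is to pass to the monomial basis: writing $f_{i'j'}=\sum c^{(i',j')}_{\rho}m_{\rho}$, one checks that the operations entering through Proposition~\ref{pr7} --- adjoining a part $2$ to $\rho$, or replacing a part $p$ of $\rho$ by $p+1$ in all possible ways --- together with multiplication by positive rationals and by the positive binomials of (5.3), carry monomial-positive symmetric functions to monomial-positive ones. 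If the recursion consisted only of such steps, induction on $i$ (equivalently on the degree $4i-2j-2$) would yield Conjecture~\ref{conj4} at once, the induction starting because $R_{r+1}$ is trivially of the required form and $c^{(1,1)}_{\emptyset}=1/2>0$.

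The difficulty, and where I expect the real work to lie, is that the algorithm of Section 9 is \emph{not} manifestly positive: equation (10.1) is homogeneous, and the coefficients $b_{k,\sigma}(\rho)$ of Theorem~\ref{th6} together with the binomials $\binom{1-n+t}{l(\sigma)}$ carry mixed signs. Concretely --- exactly as in parts (i) and (ii) of the proof of Theorem~\ref{th8} --- the $w_{\tau}$-type and $v_{\tau}$-type sums of (5.7)--(5.10) enter with opposite signs, so the recursion for $f_{ij}$ contains a negative combination of lower $c^{(i',j')}_{\rho'}$ which must, after conversion to the monomial basis, be dominated termwise by the positive part. I see two routes. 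The combinatorial one is to guess a weighted enumerative model for the $c^{(i,j)}_{\rho}$ --- a count of labelled forests, or of bipartite maps carrying edge weights in $(\al,\beta)$, specializing at $\al=1,\beta=0$ to F\'eray's model for the classical Kerov polynomials --- so designed that the recursion becomes a deletion/contraction identity in which the sign-alternating sums above appear as an inclusion--exclusion over the objects and hence leave the $c^{(i,j)}_{\rho}$ themselves visibly nonnegative. The analytic one is to establish the required termwise domination directly, from log-concavity of the factorials $(l(\rho)+2i-j-2)!$ and of the binomials involved, bootstrapped on the positivity already known for smaller indices. Either way this monotonicity/cancellation step is the crux; everything preceding it is bookkeeping with the tools assembled in Sections 4--10, and the specialization $\al=1$ shows it must amount to a refinement of the combinatorics underlying the classical Kerov polynomials.
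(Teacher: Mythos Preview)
The statement you are attempting to prove is a \emph{conjecture} in the paper, not a theorem: the paper offers no proof whatsoever of Conjecture~\ref{conj4} (nor of Conjecture~\ref{conj3} on which it rests). The only support the paper gives is the sentence ``Conjectures~\ref{conj3} and~\ref{conj4} are supported by extensive computer calculations up to $r=20$'', together with the explicit tables of $f_{ij}$ in Section~11. So there is no ``paper's own proof'' to compare your proposal with.

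As for the proposal itself, it is an outline rather than a proof, and you yourself identify the genuine gap. The bookkeeping part --- extracting the $(\al,\beta)$-graded components of the recursion (10.1)--(10.2), recognizing the factorial pattern, and reducing to a recursion among the $f_{ij}$ --- is plausible in spirit, though even here you are presupposing Conjecture~\ref{conj3} (the existence and $r$-independence of $f_{ij}$), which the paper also leaves open. But the decisive step, which you correctly flag, is not carried out: the recursion you obtain has mixed signs (the $w_\tau$ and $v_\tau$ contributions enter with opposite signs, and more generally the binomials $\binom{1-n+t}{l(\sigma)}$ in Theorem~\ref{th6} alternate), and you give no argument that the positive part dominates the negative part termwise in the monomial basis. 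Your two suggested routes --- a conjectural combinatorial model extending F\'eray's, or an unspecified log-concavity bound --- are not arguments but research programs. Neither is known to work; indeed, finding such a model is precisely the open problem the paper advertises in its final section. In short, your proposal accurately locates where the difficulty lies but does not resolve it, and the statement remains, as in the paper, a conjecture.
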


The nonnegativity of the coefficients of $K_r^{(i,j)}$ is an obvious consequence. We emphasize that the coefficients of $f_{ij}$ in terms of any other classical basis of symmetric finctions \textit{may be negative}.

Conjectures~\ref{conj3} and~\ref{conj4} are supported by extensive computer calculations up to $r= 20$, giving many values of the positive numbers $c_\rho^{(i,j)}$. We now review the more elementary cases.

\subsection*{Example $j=0$} 

This is the only situation appearing in the classical Kerov-Biane-F\'eray framework. In this case one has $\al=1,\beta=0$, so that all components $K_r^{(i,j)}$ disappear for $j\neq 0$. 

This classical framework has been extensively studied. Since the components $K_r^{(i,0)}$ correspond to the grading of the classical Kerov polynomial by weights $|\rho|=r-2i+1$, we may benefit from any result there obtained. 

In particular $K_r^{(1,0)}$ corresponds to the component of weight $r-1$ for the classical Kerov polynomial. Its value was conjectured by Biane~\cite{B2} and different proofs were later given by many authors~\cite{Sn,GR,R,F}.
\begin{theo}~\label{th9}
We have
\[K_r^{(1,0)}=\frac{1}{4} \binom{r+1}{3}\,  \sum_{|\rho|=r-1} 
l(\rho)!\, \mathcal{R}_\rho.\]
\end{theo}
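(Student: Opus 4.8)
The plan is to follow the same inductive scheme used for Theorem~\ref{th8}, comparing weights on both sides of the algorithm equations (10.1)--(10.2), but now pushing the weight comparison one step further. By Theorem~\ref{th7} the highest weight of $K_r$ is $r+1$ with coefficient $\al^rR_{r+1}$, and by Theorem~\ref{th8} the weight-$r$ component is $\al^{r-1}\beta\,K_r^{(1,1)}$. So $K_r^{(1,0)}$ is precisely the weight-$(r-1)$ component of the coefficient of $\al^{r-1}\beta^0$, i.e.\ the $\beta$-free part of the weight-$(r-1)$ piece of $K_r$. I would extract from (10.2), after substituting the moment expansions (5.3) and Theorem~\ref{th6}, the equation governing the coefficients $a_\rho^{(r)}$ with $|\rho|=r-1$; this will be an identity of the same flavor as (10.3), with the right-hand side now fed by $K_{r-1}^{(1,0)}$ (the inductive hypothesis) together with correction terms coming from the weight-$(r-1)$ contributions of the higher-weight parts $|\rho|=r+1$ and $|\rho|=r$ of $K_r$ — whose values we already know from Theorems~\ref{th7} and~\ref{th8}.

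Concretely, the first step is to isolate the three sources of weight-$(r-1)$ terms on the left of (10.2): (a) the known term $\al^rR_{r+1}$, contributing via $b_{k,\sigma}((r+1))$ with $|\sigma|=k-2$ — here I would use Proposition~\ref{pr4} and the identity (5.7) (the $M_{k}$-version, since for (10.2) we have $M_{|\rho|-k+1}$) to collapse the inner sum; (b) the weight-$r$ part $\al^{r-1}\beta\,K_r^{(1,1)}$, which being proportional to $\beta$ contributes nothing to the $\beta^0$-component we want; and (c) the unknown weight-$(r-1)$ coefficients $a_\rho^{(r)}$, which by Proposition~\ref{pr7}(i) with $s=r-1$ assemble into $\tfrac1\al\sum_{|\tau|=r-2}\big(\sum_{p\text{ part of }\tau}p(m_{p+1}(\tau)+1)a_{\tau_{\uparrow(p)}}^{(r)}\big)R_\tau$. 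Matching the $R_\tau$-coefficients against $r\sum_{|\tau|\le r}a_\tau^{(r-1)}R_\tau$ on the right (restricted to $|\tau|=r-2$ and $\beta^0$) yields a recursion determining every $a_\rho^{(r)}$ with $|\rho|=r-1$, exactly as in the proof of Theorem~\ref{th8}: a second induction on the lowest part of $\rho$, base case $\rho=\tau\cup 2$ handled by the analogous weight comparison in (10.1) (using (5.8) and the $M_{k-2}$-identity there).

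The second, purely combinatorial step is to verify that the closed form $a_\rho^{(r)}=\tfrac14\binom{r+1}{3}\tfrac{u_\rho}{l(\rho)}v_\rho\cdot l(\rho)=\tfrac14\binom{r+1}{3}u_\rho v_\rho$ (so that $\sum_{|\rho|=r-1}a_\rho^{(r)}R_\rho$ reproduces $\tfrac14\binom{r+1}{3}\sum_{|\rho|=r-1}l(\rho)!\,\mathcal R_\rho$ after rewriting $l(\rho)!\,\mathcal R_\rho=u_\rho v_\rho\prod R_i^{m_i}$) satisfies the recursion. This will reduce, after dividing out the common factors and using $l(\tau_{\uparrow(p)})=l(\tau)$, $u_{\tau_{\uparrow(p)}}/u_\tau=m_p(\tau)/(m_{p+1}(\tau)+1)$, $v_{\tau_{\uparrow(p)}}/v_\tau=p/(p-1)$, to an elementary polynomial identity in the multiplicities $m_i(\tau)$ — analogous to the trivial equality at the very end of the proof of Theorem~\ref{th8} — together with one genuinely new ingredient: checking that the numerical prefactor propagates correctly, i.e.\ that the contribution of $\al^rR_{r+1}$ through (5.7)/(5.10) combined with $r\cdot\tfrac14\binom{r}{3}$ from the inductive hypothesis sums to $\tfrac r\al\cdot\tfrac14\binom{r+1}{3}$, using $\binom{r+1}{3}=\binom{r}{3}+\binom{r}{2}$.

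The main obstacle I expect is bookkeeping the $|\rho|=r+1$ correction term in step~(a): one must evaluate $b_{k,\sigma}((r+1))$ for $|\sigma|=k-2$ (not $k-3$, which is what appears in the proof of Theorem~\ref{th8}), plug it into Proposition~\ref{pr4}, and recognize the resulting sum $\sum_k M_{\cdot}\sum_{|\sigma|=k-2}\binom{\cdot}{l(\sigma)}u_\sigma R_\sigma$ as the left side of one of the identities (5.7)--(5.10) — getting the exact binomial/sign factors and the correct shift in the index of $M$ right is where errors creep in. Everything else is a mechanical repetition of the Theorem~\ref{th8} argument one degree lower in weight, and the final combinatorial identity should again collapse to something manifestly true once the substitutions for $u,v$ are made.
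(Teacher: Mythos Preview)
The paper does not actually prove Theorem~\ref{th9}: it attributes the formula to Biane's conjecture with subsequent proofs by several authors, and remarks only that ``a direct proof might be given, following the line of Theorems~\ref{th7} and~\ref{th8}, but it is rather tedious.'' Your proposal is precisely this unwritten direct proof, so in spirit you are carrying out what the paper declines to spell out.

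That said, the bookkeeping in your step~(a) is off in a way that would derail the computation. To determine the $a_\rho^{(r)}$ with $|\rho|=r-1$ you must compare weight-$(r-2)$ terms on both sides of (10.2) and weight-$(r-3)$ terms in (10.1), not weight $r-1$; your own item~(c) already confirms this, since Proposition~\ref{pr7}(i) with $s=r-1$ outputs $\sum_{|\tau|=r-2}(\cdots)R_\tau$. At weight $r-2$ in (10.2) the correction coming from the known term $\al^r R_{r+1}$ (that is, $|\rho|=r+1$) satisfies $(r+1)-k+1+|\sigma|=r-2$, hence $|\sigma|=k-4$, not $|\sigma|=k-2$ as you write (twice). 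In the summation of Theorem~\ref{th6} this is the $(r,s)=(2,0)$ contribution, carrying the factor $\binom{r}{2}(-1/\al)^2$ with $\beta^0$; it is \emph{not} the $(r,s)=(1,0)$ contribution handled by Proposition~\ref{pr4}. Consequently neither Proposition~\ref{pr4} nor identity (5.7) applies directly here: you will have to extract $b_{k,\sigma}((r+1))$ for $|\sigma|=k-4$ from Theorem~\ref{th6} and then locate or prove a suitable collapsing identity in the style of (5.7)--(5.10), with the extra $\binom{r-t}{3}$ factor present. This missing step is exactly the additional layer of tedium the paper alludes to. Your observation in~(b), that the $|\rho|=r$ piece carries an overall factor of $\beta$ and so drops out of the $\beta^0$ comparison, is correct and does simplify matters; but the remaining work in~(a) is more substantial than your outline suggests.
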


A direct proof might be given, following the line of Theorems~\ref{th7} and~\ref{th8}, but it is rather tedious. The component of weight $r-3$ for the classical Kerov polynomial has been also computed in~\cite{GR,R}. In~\cite{La2} it is written as follows.
\begin{theo}~\label{th10}
We have
\[K_r^{(2,0)}=\frac{1}{5760}\binom{r+1}{3} \sum_{|\rho|=r-3} (l(\rho)+2)! \,f(\rho) \, \mathcal{R}_\rho,\]
with
\[f=3 m_{4}+8 m_{31}+10 m_{22}+16 m_{21^2}+24 m_{1^4}
+20 m_{3}+36 m_{21}+48 m_{1^3}+35 m_{2}+40 m_{1^2}+18 m_{1}.
\]
\end{theo}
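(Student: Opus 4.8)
Since $j=0$ forces the specialization $\al=1,\beta=0$, Theorem~\ref{th10} is a statement about the ordinary Kerov polynomial, and its term of weight $r-3$ was already determined by Goulden--Rattan~\cite{GR} and Rattan~\cite{R}. The route I would actually take is the reconciliation underlying~\cite{La2}: start from their expression for that term, rewrite each monomial $\prod_{i\ge 2}((i-1)R_i)^{m_i(\rho)}$ through the shorthand $\mathcal{R}_\rho=\prod_{i\ge 2}((i-1)R_i)^{m_i(\rho)}/m_i(\rho)!$, pull out the common scalar $\tfrac{1}{5760}\binom{r+1}{3}(l(\rho)+2)!$, and check that what remains is the value $f(\rho)$ of the displayed inhomogeneous symmetric function. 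Because $p_1(\rho)=|\rho|=r-3$, the residual $r$-dependence is absorbed once $f$ is expanded in the power-sum basis, so the verification reduces to a finite comparison of coefficients against the tables in~\cite{GR,R}.

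A self-contained proof is also possible, along the exact lines of Theorems~\ref{th7} and~\ref{th8}. Put $\al=1,\beta=0$, write $K_r=\sum_{|\rho|\le r+1}a^{(r)}_\rho R_\rho$, and induct on $r$. At $\beta=0$ one is in the classical setting, where $K_r$ has terms only at the weights $r+1,r-1,r-3,\dots$; the components in play are therefore $R_{r+1}$ (Theorem~\ref{th7}), $K_r^{(1,0)}$ (Theorem~\ref{th9}) and the sought $K_r^{(2,0)}$, while $K_{r-1}^{(2,0)}$ is the inductive hypothesis. I would extract the weight-$(r-4)$ part of equation (10.2) and the weight-$(r-5)$ part of equation (10.1); Theorem~\ref{th4} turns each sum $\sum_i c_i(\la)\,x_i^m(\cdots)$ into a moment $M_m$, and (5.3) together with (5.7)--(5.10) rewrites both sides as polynomials in the free cumulants. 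Equating coefficients of $R_\tau$ yields a recurrence for the unknowns $a^{(r)}_\rho$ with $|\rho|=r-3$, whose inhomogeneous part is built from $R_{r+1}$, $K^{(1,0)}_r$ and $K^{(2,0)}_{r-1}$.

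The one new ingredient, beyond what Theorems~\ref{th7}--\ref{th8} needed, is several more orders of the expansion (8.1). In the weight-$(r-4)$ equation the unknowns couple through the coefficients $b_{k,\sigma}(\rho)$ with $|\sigma|=k-2$ (handled by Propositions~\ref{pr4} and~\ref{pr7}), but the known components $K_r^{(1,0)}$ and $R_{r+1}$ enter through $b_{k,\sigma}(\rho)$ with $|\sigma|=k-4$ and $|\sigma|=k-6$ respectively, i.e. the second and third sub-leading orders of Theorem~\ref{th6}. I would first record these by extending Proposition~\ref{pr4}: at $\beta=0$ the constraint $2\sum_i r_i=k-|\sigma|$ means one part carries $r_p=2$ (or two parts carry $r=1$) for $|\sigma|=k-4$, and, in the single-row case, one part carries $r_p=3$ for $|\sigma|=k-6$; in each case one gets an explicit sum of products $R_{\rho\setminus p}\,u_\nu R_\nu$ with binomial weights, whose inner sums collapse via (5.7)--(5.10) and further specializations of the $\lambda$-ring identities of Section~4.4, exactly as in the proof of Proposition~\ref{pr7}.

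With the recurrence in hand one makes the ansatz $a^{(r)}_\rho=\tfrac{1}{5760}\binom{r+1}{3}(l(\rho)+2)!\,\tilde f(\rho)$ for $|\rho|=r-3$, and shows, by a secondary induction on the lowest part of $\rho$ as in Theorem~\ref{th8}, that the recurrence forces $\tilde f$ to be a fixed inhomogeneous symmetric function of degree $\le 4$; a check up to $r\approx 10$ then identifies it with $f$. Consistency of the ansatz reduces to a handful of combinatorial identities among the monomial coefficients of $f$, in the spirit of --- but more numerous than --- the trivial equality $\sum_i i(m_{i+1}+1)\tfrac{m_i}{m_{i+1}+1}\tfrac{i}{i-1}=\sum_i im_i(\tfrac{1}{i-1}+1)$ that closed the proof of Theorem~\ref{th8}. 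I expect the main obstacle to be precisely this bookkeeping: several weight-levels of (10.1)--(10.2) now interact simultaneously, the required sub-leading orders of Theorem~\ref{th6} are genuinely more intricate than Proposition~\ref{pr4} (products over parts, not a single distinguished part), and the closing identities, though elementary, must be produced in quantity --- which is exactly why, in practice, I would lean on the first route and cite~\cite{GR,R,La2}.
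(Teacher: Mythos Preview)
Your proposal is correct and, in its first route, matches the paper exactly: the paper does not give an independent proof of Theorem~\ref{th10} but simply records that the weight-$(r-3)$ component of the classical Kerov polynomial was computed in~\cite{GR,R} and rewritten in this form in~\cite{La2}. Your second, self-contained outline via the weight-$(r-4)$ and weight-$(r-5)$ parts of (10.1)--(10.2) goes beyond what the paper attempts; it is a reasonable plan and your assessment of the difficulties (the sub-leading orders of Theorem~\ref{th6} and the proliferation of closing identities) is accurate, which is presumably why the paper, like you, defers to the citation.
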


For $\al=1$ our conjectures have been already presented in~\cite{La2}, where the conjectural value of $f_{i\,0}$ is given for $i=3,4$. Observe that in~\cite{La2}, $K_r^{(i,0)}$ is written with a factor $\binom {r+1}{3}$ instead of $r$. Thus we have 
$f_{i\,0}=\frac{1}{6}(r+1)(r-1)f_i=\frac{1}{6}(m_1+2i)(m_1+2i-2)f_i$
with $f_i$ given in~\cite[Section 2]{La2}.
 
\subsection*{Example $i=j$} 

We know $f_{11}=1/2$. Other values of $f_{i\,i}$ are conjectured to be
\begin{equation*}
\begin{split}
24 f_{22}&=3 m_{2}+4 m_{1^2}+2 m_{1},\\
2.6! f_{33}&=
15 m_{4}+40 m_{31}+60 m_{22}+90 m_{21^2}+144 m_{1^4}\\
&+60 m_{3}+120 m_{21}+180 m_{1^3}+75 m_{2}+100 m_{1^2}+30 m_{1},\\
6.8! f_{44}&=
105 m_{6}+420 m_{51}+945 m_{42}+1484 m_{41^2}+1176 m_{3^2}+2688 m_{321}\\
&+4368 m_{31^3}
+3906 m_{2^3}+6216 m_{2^21^2}+10224 m_{21^4}+17040 m_{1^6}\\
&+1050 m_{5}+3430 m_{41}+6132 m_{32}+9520 m_{31^2}+13356 m_{2^21}+21168 m_{21^3}\\&+34080 m_{1^5}
+4025 m_{4}+10248 m_{31}+14154 m_{22}+21252 m_{21^2}+32592 m_{1^4}\\
&+7364 m_{3}+13776 m_{21}+19656 m_{1^3}
+6412 m_{2}+8120 m_{1^2}+2128 m_{1}.
\end{split}
\end{equation*}

These values support another conjecture giving the linear terms of $K_r^{(i,i)}$, namely 
\begin{equation*}
K_r^{(i,i)}=|s(r,r-i)| R_{r-i+1} + \mathrm{non}\,\mathrm{linear}\,\mathrm{terms},
\end{equation*} 
with $s(r,i)$ the usual Stirling number of the first kind.

\subsection*{Example $i=j+1$} 

We know $24f_{1\,0}=m_1(m_1+2)=m_2+2 m_{1^2}+2 m_1$. Other values of $f_{i,i-1}$ are conjectured to be
\begin{equation*}
\begin{split}
2.6! f_{21}&=
13 m_{4}+40 m_{31}+55 m_{22}+95 m_{21^2}+162 m_{1^4}\\
&+68 m_{3}+150 m_{21}+240 m_{1^3}+103 m_{2}+150 m_{1^2}+48 m_{1},\\
30.8! f_{32}&=
800 m_{6}+3430 m_{51}+7455 m_{42}+12586 m_{41^2}+9520 m_{3^2}+22540 m_{321}\\
&+38220 m_{31^3}
+31640 m_{2^3}+53620 m_{2^21^2}+91260 m_{21^4}+155850 m_{1^6}\\
&+8920 m_{5}+30870 m_{41}+54320 m_{32}+88816 m_{31^2}+123200 m_{2^21}+202440 m_{21^3}\\&+334200 m_{1^5}
+37480 m_{4}+100380 m_{31}+136990 m_{22}+214746 m_{21^2}+338520 m_{1^4}\\
&+74160 m_{3}+144970 m_{21}+213668 m_{1^3}
+69120 m_{2}+91280 m_{1^2}+24320 m_{1}.
\end{split}
\end{equation*}

\subsection*{Example $i=j+2$} 

Theorem~\ref{th10} gives $f_{2\,0}$. For $f_{31}$ the values of $40 . 11! \, c_\rho^{(3,1)}$ are conjectured to be
\small

\hspace{1 cm}\\
\begin{tabular}{|c|c|c|c|c|c|c|c|}
\hline
8 & 71 & 62 & $61^2$ & 53 & 521 & $51^3$ & $4^2$ \\ \hline
13805 & 80520 & 231550 & 412500 & 418880 & 1034880 & 1830840 & 508662 \\ \hline
431 & $42^2$ & $421^2$ & $41^4$ & $3^22$ & $3^21^2$ & $32^21$ & $321^3$ \\ \hline
1598520 & 2216060 & 3921280 & 6901290 & 2818200 & 4996200 & 6925600 & 12205050 \\ \hline
$31^5$ & $2^4$ & $2^31^2$ & $2^21^4$ & $21^6$ & $1^8$ \\ \cline{1-6}
21411500 & 9617300 & 16938350 & 29737400 & 52047600 & 90902700\\ \cline{1-6}
\end{tabular}

\hspace{0.3 cm}\\
\begin{tabular}{|c|c|c|c|c|c|c|c|}
\hline
7 & 61 & 52 & $51^2$ & 43 & 421 & $41^3$ & $3^21$ \\ \hline
292160 & 1448260 & 3563340 & 6201360 & 5462072 & 13046880 & 22528440 & 16548840 \\ \hline
$32^2$ & $321^2$ & $31^4$ & $2^31$ & $2^21^3$ & $21^5$ & $1^7$\\ 
\cline{1-7}
22705760 & 39264280 & 67457940 & 53941800 & 92776200 & 158867500 & 271263300\\
\cline{1-7}
\end{tabular}

\hspace{0.3 cm}\\
\begin{tabular}{|c|c|c|c|c|c|c|c|c|c|c|}
\hline
6 & 51 & 42 & $41^2$ & $3^2$ & 321 & $31^3$ \\ \hline
2536490 & 10429540 & 21470152 & 36269860 & 27074432 & 62069700 & 103992240 \\ \hline
$2^3$ & $2^21^2$ & $21^4$ & $1^6$ \\ \cline{1-4}
84217980 & 141283560 & 235465890 & 390802500 \\ \cline{1-4}
\end{tabular}

\hspace{0.3 cm}\\
\begin{tabular}{|c|c|c|c|c|c|c|}
\hline
5 & 41 & 32 & $31^2$ & $2^21$ & $21^3$ & $1^5$ \\ \hline
11667920 & 38560412 & 64473112 & 104816800 & 140167720 & 225795680 & 361308200 \\ \hline
\end{tabular}

\hspace{0.3 cm}\\
\begin{tabular}{|c|c|c|c|c|}
\hline
4 & 31 & $2^2$ & $21^2$ & $1^4$ \\ \hline
30608765 & 77913572 & 102006542 & 157306160 & 239692200 \\ \hline
\end{tabular}

\hspace{0.3 cm}\\
\begin{tabular}{|c|c|c|c|c|c|}
\hline
3 & 21 & $1^3$ & 2 & $1^2$ & 1  \\ \hline
45634160 & 84412152 & 119902200 & 35681580 & 45160632 & 11246400\\ \hline
\end{tabular}
\normalsize

\section{$Q$-positivity}

For a better understanding of the previous results, it is necessary to introduce new polynomials $C_i$ and $Q_i$ in the free cumulants. 

Define $C_0=Q_0=1$, $C_1=Q_1=0$ and for any $n \ge 2$,
\[C_n= \sum_{|\rho|=n} l(\rho)! \, \mathcal{R}_\rho,
\quad\quad Q_n= \sum_{|\rho|=n} (l(\rho)-1)! \, \mathcal{R}_\rho.\] 
Using the notation
\[\mathcal{C}_\rho=\prod_{i\ge 2} C_i^{m_i(\rho)}/m_i(\rho)!,
\quad\quad \mathcal{Q}_\rho=\prod_{i\ge 2} Q_i^{m_i(\rho)}/m_i(\rho)!,\]
the correspondence between these three families of polynomials is given by
\begin{equation*}
\begin{split}
C_n&=  \sum_{|\rho|=n} \mathcal{Q}_\rho,\\
Q_n&= \sum_{|\rho|=n} {(-1)}^{l(\rho)} (l(\rho)-1)!  \,  \mathcal{C}_\rho,\\
(1-n)R_{n}&=\sum_{|\rho|=n} {(-1)}^{l(\rho)} \,  \mathcal{Q}_\rho=
\sum_{|\rho|=n} {(-1)}^{l(\rho)} l(\rho)!\,  \mathcal{C}_\rho.
\end{split}
\end{equation*}

These relations are better understood by using symmetric functions.
Actually let $\mathbf{A}_\la$ be the alphabet defined by 
\begin{equation*}
(1-n)R_n=h_n(\mathbf{A}_\la),\quad Q_n=-p_n(\mathbf{A}_\la)/n,\quad C_n={(-1)}^n e_n(\mathbf{A}_\la).
\end{equation*}
We emphasize that in spite of
\[(1-n)R_n(\la)=h_n((1-n)A_\la)=h_n(\mathbf{A}_\la),\]
the connection between the two alphabets $\mathbf{A}_\la$ and $A_\la=I_\la-O_\la$ is still unclear. Therefore $\mathbf{A}_\la$ may only be considered as formal.
Then the previous relations are merely the classical properties~\cite[pp. 25 and 33]{Ma}
\begin{equation*}
\begin{split}
p_n&=-n \sum_{|\rho|=n} {(-1)}^{l(\rho)} \,  u_\rho h_\rho/l(\rho)
=-n \sum_{|\rho|=n} (-1)^{n-l(\rho)}  u_\rho e_\rho/ l(\rho),\\
e_n&= \sum_{|\rho|=n} (-1)^{n-l(\rho)} u_\rho h_\rho= \sum_{|\rho|=n} (-1)^{n-l(\rho)} z_{\rho}^{-1} p_{\rho},\\
h_n&=\sum_{|\rho|=n} z_{\rho}^{-1} p_{\rho}=
\sum_{|\rho|=n} (-1)^{n-l(\rho)} u_\rho e_\rho.
\end{split}
\end{equation*}

Obviously $Q$-positivity implies $R$-positivity. Therefore the following conjecture is \textit{a priori} stronger than the positivity statement of Conjecture~\ref{conj2}. It does not however implies integrality of the $R$-coefficients.
\begin{conj}\label{conj5}
For any $(i,j)\neq (0,0)$ the coefficients  of $K_r^{(i,j)}$ in terms of the indeterminates $Q_k$ are nonnegative rational numbers.
\end{conj}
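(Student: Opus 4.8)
The plan is to reduce the statement, via Conjectures~\ref{conj3} and~\ref{conj4}, to a single universal $Q$-positivity identity and then to establish that identity by $\lambda$-ring computations with the formal alphabet $\mathbf{A}_\la$ of Section~12. Granting Conjecture~\ref{conj3}, write $K_r^{(i,j)}=r\sum_{|\rho|=n}(l(\rho)+k)!\,f_{ij}(\rho)\,\mathcal R_\rho$ with $n=r-2i+j+1$, $k=2i-j-2$, and expand $f_{ij}=\sum_\nu c^{(i,j)}_\nu m_\nu$ with all $c^{(i,j)}_\nu>0$ by Conjecture~\ref{conj4}. Since the $c^{(i,j)}_\nu$ (and $r$) are positive, by linearity it suffices to prove that, for every partition $\nu$ and every integer $k\ge-1$,
\[T_{n,k}(\nu)=\sum_{|\rho|=n}(l(\rho)+k)!\,m_\nu(\rho)\,\mathcal R_\rho\]
is a nonnegative rational combination of the monomials $\mathcal Q_\tau$, $|\tau|=n$.

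The next step is to translate into $\mathbb S[\mathbf A_\la]$. Since $h_i(\mathbf A_\la)=-(i-1)R_i$ one has $u_\rho\,h_\rho(\mathbf A_\la)=(-1)^{l(\rho)}l(\rho)!\,\mathcal R_\rho$, and since $p_i(-\mathbf A_\la)=iQ_i$ one has $z_\tau^{-1}p_\tau(-\mathbf A_\la)=\mathcal Q_\tau$; thus, up to normalization, the $R$'s are the complete homogeneous basis of $\mathbb S[\mathbf A_\la]$ and the $Q$'s the power-sum basis of the symmetric functions of $-\mathbf A_\la$, and $Q$-positivity of $T_{n,k}(\nu)$ says exactly that the associated symmetric function of $-\mathbf A_\la$ has nonnegative coefficients on the power sums $p_\tau(-\mathbf A_\la)/z_\tau$. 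For $\nu=\varnothing$ this is now transparent: by the $\lambda$-ring Cauchy formula (4.4) at $x=-(k+1)$, followed by (4.3), one gets (for $k\ge0$)
\[\sum_{|\rho|=n}(l(\rho)+k)!\,\mathcal R_\rho=k!\,h_n\big((k+1)(-\mathbf A_\la)\big)=k!\sum_{|\mu|=n}(k+1)^{l(\mu)}\,\mathcal Q_\mu,\]
which is visibly $Q$-positive, while $k=-1$ is the very definition $Q_n=\sum_{|\rho|=n}(l(\rho)-1)!\mathcal R_\rho$. The factorials $(l(\rho)+k)!$ are precisely what converts the sign-alternating $R$-expression into a positive $Q$-expression.

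The remaining work --- and, I expect, the combinatorial heart of the matter --- is to carry the factor $m_\nu(\rho)$ through this computation. As $m_\nu(\rho)$ is a polynomial in the part power sums $p_a(\rho)=\sum_i\rho_i^a$, multiplication by it inside $\sum_\rho(\cdots)\mathcal R_\rho$ acts as a differential operator on the generating series $\prod_{i\ge2}\exp\!\big((i-1)R_i\,x^i\big)$; equivalently it amounts to a plethystic substitution in $h_n\big((k+1)(-\mathbf A_\la)\big)$ after some auxiliary-alphabet bookkeeping. One must then check that the resulting polynomial in the $R_i$, once rewritten through $p_a(-\mathbf A_\la)=aQ_a$, still has nonnegative coefficients. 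A plausible route is to prove a refined identity expressing $T_{n,k}(\nu)$ as a nonnegative rational combination of products $T_{n',k'}(\varnothing)\cdot(\text{monomial in the }Q_a)$ and then to invoke the $\nu=\varnothing$ case; but identifying such an expansion and verifying positivity of its coefficients is not routine.

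The genuine obstacle, however, is that this reduction rests on Conjectures~\ref{conj3} and~\ref{conj4}, which are themselves open. An unconditional argument would instead have to run the algorithm of Sections~9--10 directly in the $Q$-basis: rewrite the moments $M_k$ (governed by the alphabet $A_\la$) and the node-addition formula of Theorem~\ref{th6} in terms of the $Q_k$ (governed by $\mathbf A_\la$), and prove by induction on $r$ that the recursions (10.1)--(10.2) preserve $Q$-positivity. The difficulty there is that $A_\la$ and $\mathbf A_\la$ are linked only through the \emph{nonlinear} relation $(1-n)R_n=h_n(\mathbf A_\la)=h_n\big((1-n)A_\la\big)$, so the moment--cumulant passage mixes the two gradings and produces sign-alternating terms --- already $R_n$, written in the $Q$-basis, has coefficients alternating with the length --- and showing that these signs conspire to cancel inside $K_r^{(i,j)}$ is precisely what the conjecture asserts and where the real work lies.
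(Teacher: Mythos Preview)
The statement you are attempting is \emph{Conjecture}~\ref{conj5}: it is not proved in the paper. The paper only records (immediately after Conjecture~\ref{conj7}) that Conjecture~\ref{conj5} would follow at once from Conjectures~\ref{conj6} and~\ref{conj7}, the $Q$-side analogues of Conjectures~\ref{conj3} and~\ref{conj4}; no proof of any of these is offered.

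Your proposal is therefore to be compared not against a proof but against the paper's own conditional reduction. Where the paper routes Conjecture~\ref{conj5} through the $g_{ij}$ conjectures (\ref{conj6} and~\ref{conj7}), you route it through the $f_{ij}$ conjectures (\ref{conj3} and~\ref{conj4}) together with an intermediate claim: that for every partition $\nu$ and every $k\ge-1$ the polynomial $T_{n,k}(\nu)=\sum_{|\rho|=n}(l(\rho)+k)!\,m_\nu(\rho)\,\mathcal R_\rho$ is $Q$-positive. This is a genuinely different reduction; in effect you are attempting one direction of what Section~13 calls a ``plausible'' equivalence between the $f_{ij}$- and $g_{ij}$-positivities, an equivalence the paper explicitly leaves open.

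As you yourself say, the proposal is not a proof. There are two separate gaps. First, the whole argument is conditional on the open Conjectures~\ref{conj3} and~\ref{conj4}. Second, even granting those, the $Q$-positivity of $T_{n,k}(\nu)$ is established only for $\nu=\varnothing$; for general $\nu$ you sketch a plan (differential operators on the $R$-generating series, plethystic substitutions) but do not carry it out, and it is not clear a priori that each individual $T_{n,k}(\nu)$ is $Q$-positive rather than only the specific combinations $\sum_\nu c^{(i,j)}_\nu T_{n,k}(\nu)$ prescribed by $f_{ij}$. So the ``combinatorial heart of the matter'' you flag is genuinely unresolved, and without it the proposal is a reasonable strategy sketch rather than a proof --- which, to be fair, is exactly the status the paper itself assigns to the statement.
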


It is clear that $K_r^{(0,0)}=R_{r+1}$ is not $Q$-positive. But by Theorems~\ref{th8} and~\ref{th9} we have
\[K_r^{(1,1)}=\frac{r}{2} \,Q_r,\quad \quad
K_r^{(1,0)}=\frac{1}{4} \binom{r+1}{3} \sum_{|\rho|=r-1} \mathcal{Q}_\rho.\]
Remarkably this $Q$-positivity appears completely analogous (and possibly equivalent) to the $R$-positivity studied above. We conjecture that all components $K_r^{(i,j)}$ but two may be described in a unified way, \textit{independent of} $r$.
\begin{conj}\label{conj6}
For any $(i,j)$ but $(0,0)$ and $(1,1)$, there exists an inhomogeneous symmetric function $g_{ij}$ having maximal degree $4i-2j-2$ such that
\[K_r^{(i,j)} = r\, \sum_{|\rho|=r-2i+j+1} 
(2i-j-1)^{l(\rho)} \, g_{ij}(\rho)  \,  \mathcal{Q}_\rho,\]
where $g_{ij}(\rho)$ denotes the value of $g_{ij}$ at the integral vector $\rho$. This symmetric function is $\mathrm{independent}$ of $r$.
\end{conj}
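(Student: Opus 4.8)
Since the statement is a conjecture, what follows is a proposed line of attack, not a complete proof. The natural strategy is to \emph{reduce Conjecture~\ref{conj6} to Conjecture~\ref{conj3}} by means of a universal change of basis between the ``$R$-picture'' and the ``$Q$-picture'' of $K_r^{(i,j)}$. The relevant device is the formal alphabet $\mathbf{A}_\la$ of Section~13, for which $(1-n)R_n=h_n(\mathbf{A}_\la)$, $Q_n=-p_n(\mathbf{A}_\la)/n$ and $C_n=(-1)^n e_n(\mathbf{A}_\la)$, equivalently the generating-function dictionary
\[\sum_{n\ge 0}C_n\,t^n=\frac{1}{1-\sum_{n\ge 2}(n-1)R_n\,t^n}=\exp\Big(\sum_{n\ge 2}Q_n\,t^n\Big),\]
whose ``length-weighted'' consequences, for every integer $a\ge 0$, take the shape
\[\sum_{\rho}(l(\rho)+a)!\,\mathcal{R}_\rho\,t^{|\rho|}=\frac{a!}{\big(1-\sum_{n\ge 2}(n-1)R_n\,t^n\big)^{a+1}}=a!\sum_{\rho}(a+1)^{l(\rho)}\,\mathcal{Q}_\rho\,t^{|\rho|},\]
the sums running over partitions with all parts $\ge 2$. (The factor $1/m_i(\rho)!$ inside $\mathcal{R}_\rho$ turns $l(\rho)!$ into a multinomial coefficient, which is why the middle term is a power of a geometric series, while the exponential formula produces the same power on the right.) The case $a=0$ is precisely the identity $C_m=\sum_{|\rho|=m}l(\rho)!\,\mathcal{R}_\rho=\sum_{|\rho|=m}\mathcal{Q}_\rho$ recorded in Section~13.

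The core of the proof would then be a \textbf{transfer lemma}: if a weight-$m$ polynomial $\Xi$ in $\{R_k,\ k\ge 2\}$ admits an expansion $\Xi=\sum_{|\rho|=m}(l(\rho)+a)!\,f(\rho)\,\mathcal{R}_\rho$ with $a\ge 0$ and $f$ a symmetric function of the parts of maximal degree $D$, then $\Xi=\sum_{|\rho|=m}(a+1)^{l(\rho)}\,g(\rho)\,\mathcal{Q}_\rho$, where $g=\Phi_a(f)$ for a linear operator $\Phi_a$ on symmetric functions that depends only on $a$ (not on $m$) and preserves the degree filtration, so that $\deg\Phi_a(f)\le D$. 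To build $\Phi_a$, write $f$ as a polynomial in the power sums $p_k(\rho)=\sum_{n\ge 2}n^k m_n(\rho)$ and use the elementary ``derivation'' identities $m_n(\rho)\,\mathcal{R}_\rho=(n-1)R_n\,\mathcal{R}_{\rho\setminus n}$ and $m_n(\rho)\,\mathcal{Q}_\rho=Q_n\,\mathcal{Q}_{\rho\setminus n}$: each inserted factor $p_k(\rho)$ amounts to multiplication by $\sum_{n\ge 2}n^k(n-1)R_n\,t^n$ (on the $\mathcal{R}$-side) or $\sum_{n\ge 2}n^k Q_n\,t^n$ (on the $\mathcal{Q}$-side) together with a unit shift of the length parameter, which turns $a$ into $a+1$ in the displayed identity. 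Iterating this, re-expressing the $R$'s that occur in terms of the $Q$'s by the relation $(1-n)R_n=\sum_{|\sigma|=n}(-1)^{l(\sigma)}\mathcal{Q}_\sigma$ of Section~13, and reassembling products of $\mathcal{Q}$'s, produces $\Phi_a$ explicitly; a bookkeeping of parts shows that the degree in the parts is not raised (one computes, e.g., $\Phi_a(1)=a!$ and $\Phi_a(p_1)=a!\,p_1$). Granted Conjecture~\ref{conj3}, one then sets $g_{ij}=\Phi_{2i-j-2}(f_{ij})$: it is independent of $r$ because $f_{ij}$ is and $\Phi_a$ depends only on $a=2i-j-2$, and its maximal degree is $\le 4i-2j-2$. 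One expects $\Phi_a$ to be invertible on each finite-dimensional space of symmetric functions of bounded degree, in which case this reduction is an equivalence: Conjectures~\ref{conj3} and~\ref{conj6} carry the same content away from the exceptional pairs.

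It remains to account for the two exclusions. For $0\le j\le i$ one has $a=2i-j-2\ge 0$ \emph{except} when $(i,j)=(1,1)$, where $a=-1$, or $(i,j)=(0,0)$, where $a=-2$; these are exactly the cases in which the weight $(2i-j-1)^{l(\rho)}=(a+1)^{l(\rho)}$ ceases to be a sensible positive weight. For $(1,1)$ it vanishes identically, which is incompatible with $K_r^{(1,1)}=\frac{r}{2}\,Q_r\neq 0$ (Theorem~\ref{th8}); for $(0,0)$ it becomes $(-1)^{l(\rho)}$, matching the fact that $K_r^{(0,0)}=R_{r+1}$ (Theorem~\ref{th7}) is not $Q$-positive. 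Thus the restriction in the statement is forced.

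Two obstacles stand out. First, the entire scheme is \emph{conditional on Conjecture~\ref{conj3}}, which is itself open; at best one gets a reduction of Conjecture~\ref{conj6} to Conjecture~\ref{conj3}. Second, even granted Conjecture~\ref{conj3}, the transfer lemma — in particular the assertion that $\Phi_a$ acts uniformly in $m$ on the \emph{packaged} data and does not increase the degree — is a genuine, if self-contained, combinatorial problem about the two bases $\{\mathcal{R}_\rho\}$ and $\{\mathcal{Q}_\rho\}$ of the weight-$m$ component, and would have to be worked out carefully. One could instead try a direct induction on $r$ from the recursion~(10.1)--(10.2), in the spirit of the proofs of Theorems~\ref{th7} and~\ref{th8}; but since that recursion is driven by the expansion of $R_n(\la^{(i)})$ in Theorem~\ref{th6}, it forces the same $R$-to-$Q$ translation, with considerably heavier algebra. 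Finally, the \emph{positivity} of the coefficients of $g_{ij}$ — the real point, which via Conjecture~\ref{conj6} would yield the $Q$-positivity of Conjecture~\ref{conj5} — would \emph{not} follow from this argument, since $\Phi_a$ does not preserve positivity of monomial coefficients; that strengthening would require a different, presumably combinatorial, ingredient.
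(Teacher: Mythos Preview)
The statement is labelled a conjecture in the paper and is not proved there; the paper supplies only numerical evidence (the tables of $g_{ij}$ in Section~12) and, in Section~13, a heuristic bridge between Conjectures~\ref{conj3} and~\ref{conj6} via Cauchy formulas for the formal alphabet $\mathbf{A}_\la$.

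Your proposed line of attack is precisely a formalization of that bridge. Your displayed identity
\[\sum_{\rho}(l(\rho)+a)!\,\mathcal{R}_\rho\,t^{|\rho|}=\frac{a!}{\bigl(1-\sum_{n\ge 2}(n-1)R_n\,t^n\bigr)^{a+1}}=a!\sum_{\rho}(a+1)^{l(\rho)}\,\mathcal{Q}_\rho\,t^{|\rho|}\]
is correct and is nothing but the specialization at $A=\mathbf{A}_\la$, $x=a+1=2i-j-1$, of the Cauchy identity
\[(-1)^n e_n(xA)=\sum_{|\rho|=n}(-1)^{l(\rho)}x^{l(\rho)}z_\rho^{-1}p_\rho(A)=\sum_{|\rho|=n}\binom{-x}{l(\rho)}u_\rho h_\rho(A)\]
that the paper writes out just before Proposition~\ref{pr9}. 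The worked example $K_r^{(2,2)}$ there, obtained by differentiating these Cauchy formulas in an auxiliary variable and setting it to $1$, is exactly what your operator $\Phi_a$ would do; the paper's closing remark that ``in general the study of $K_r^{(i,j)}$ may be done by differentiating $4i-2j-2$ times the Cauchy formulas'' is your transfer lemma in a different vocabulary. Your analysis of the two exceptions $(0,0)$ and $(1,1)$ also matches the paper's observations that $K_r^{(0,0)}=R_{r+1}$ and $K_r^{(1,1)}=\frac{r}{2}Q_r$ fall outside the pattern.

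In short, your approach and the paper's own suggested line coincide, and both leave the same two genuine gaps open: the dependence on Conjecture~\ref{conj3}, and the missing proof that the transfer operation carries a symmetric function $f_{ij}$ of bounded degree to a symmetric function $g_{ij}$ of the same bounded degree independently of $r$ (the paper only says the equivalence ``seems plausible''). Neither constitutes a proof, which is consistent with the statement's status as a conjecture.
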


As in Section 11 the conjectured positivity of the $Q$-coefficients of $K_r^{(i,j)}$ reflects a stronger property, given by the expansion of $g_{ij}$ in terms of monomial symmetric functions.
\begin{conj}\label{conj7}
The inhomogeneous symmetric function $g_{ij}$ may be written
\[g_{ij}= \sum_{|\rho| \le 4i-2j-2} b_\rho^{(i,j)} m_\rho,\]
where the coefficients $b_\rho^{(i,j)}$ are positive rational numbers.
\end{conj}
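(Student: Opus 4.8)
The plan is to treat Conjecture~\ref{conj7} together with the structural Conjecture~\ref{conj6} on which it rests, and to attack the two halves — first the existence and $r$-independence of the symmetric functions $g_{ij}$, then the positivity of their monomial coefficients $b_\rho^{(i,j)}$ — by the recursive machinery of Sections 9--10.

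\emph{Stage 1: structure of $K_r^{(i,j)}$.} I would argue by induction on $i$ (equivalently on the gap $4i-2j$ between the weight $r-2i+j+1$ and the top weight $r+1$), following the pattern of the proofs of Theorems~\ref{th7} and~\ref{th8}. Fixing $(i,j)$ and extracting from the algorithm (10.1)--(10.2) the homogeneous component of $K_r$ of weight $r-2i+j+1$ and $\al$-degree $r-i$, one gets a linear system for the coefficients $a_\rho^{(r)}$ with $|\rho|=r-2i+j+1$, whose inhomogeneous term is built from the already known components $K_r^{(i',j')}$ and $K_{r-1}^{(i',j')}$ with $(i',j')$ ``smaller''. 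The key observation is that, after expanding the coefficients $b_{k,\sigma}(\rho)$ of Theorem~\ref{th6} via the $\lambda$-ring identities (5.7)--(5.10), all the data entering the recursion — the binomials, the combinatorial weights $v_\rho$, $w_\rho$, and the triangular system of Proposition~\ref{pr7}(i)--(ii) — are \emph{independent of $r$}; the only $r$-dependence comes through overall scalar factors $r$, $r-1$, $\dots$, which is precisely the origin of the prefactors $r$ and $(l(\rho)+2i-j-2)!$ (resp.\ $(2i-j-1)^{l(\rho)}$) in Conjectures~\ref{conj3} and~\ref{conj6}. Solving the triangular recursion (induction on the smallest part of $\rho$, exactly as in Theorems~\ref{th7}--\ref{th8}) then produces $a_\rho^{(r)}$ as $r$ times a function of $\rho$ alone, defining $g_{ij}$; the change of basis among $R_n$, $Q_n$, $C_n$ recorded in Section 12 converts the $\mathcal{R}_\rho$-expansion of Conjecture~\ref{conj3} into the $\mathcal{Q}_\rho$-expansion of Conjecture~\ref{conj6}, and the degree bound $4i-2j-2$ follows from Proposition~\ref{pr6}(i).

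\emph{Stage 2: monomial positivity.} With a closed recursion for $g_{ij}$ in hand, the plan is to read off a combinatorial model for $g_{ij}(\rho)$ that generalizes F\'eray's description of Kerov polynomials at $\al=1$: one would interpret the rescaled coefficient $K_r^{(i,j)}/(\text{prefactor})$ as a generating function of certain maps or factorizations carrying an $(\al,\beta)$-weight, in such a way that passing from the $\mathcal{Q}_\rho$- to the $m_\rho$-expansion corresponds to grouping these objects by the ``type'' of their underlying partition; positivity of $b_\rho^{(i,j)}$ is then the assertion that each group has nonnegative total weight.

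\emph{Main obstacle.} The essential difficulty is cancellation. The recursion (10.1)--(10.2) is intrinsically \emph{signed}: the $\al/\beta$ alternation in Proposition~\ref{pr2}, the subtractions on the right-hand sides, and the alternating binomials in (5.7)--(5.10) all enter, and — as the paper repeatedly stresses — the resulting coefficients are nonnegative \emph{only} in the monomial basis, being genuinely negative in the $h$-, $e$- or $p$-bases. Hence no naive induction can yield Conjecture~\ref{conj7}: one needs either a sign-reversing involution collapsing the signed expansion onto a manifestly positive monomial expansion, or an independent bijective construction of the $g_{ij}$. This obstruction is already present and unresolved in the classical case $\al=1$ (the conjectures of~\cite{La2}), which is exactly why the statement is offered here as a conjecture; I expect a real proof to require a two-parameter ``$(\al,\beta)$-scheme'' in the spirit of~\cite{F,F2} — a $\beta$-deformation of the relevant combinatorics of maps or partial permutations — which does not yet exist.
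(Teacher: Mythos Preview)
The statement you are attempting is a \emph{conjecture} in the paper; there is no proof to compare against. The paper offers no argument for Conjecture~\ref{conj7} beyond computer verification up to $r=20$, and the surrounding Conjecture~\ref{conj6} on which it rests is likewise unproved.

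Your proposal is not a proof but a programme, and it contains a genuine gap already in Stage~1. You assert that the recursion (10.1)--(10.2), once restricted to a fixed $(i,j)$-component, has coefficients ``independent of $r$'' apart from an overall prefactor. This is precisely the content of Conjecture~\ref{conj6} (and~\ref{conj3}), and the paper does \emph{not} establish it in general: Theorems~\ref{th7} and~\ref{th8} handle only the top two layers $(i,j)=(0,0)$ and $(1,1)$, where the triangular system of Proposition~\ref{pr7} closes cleanly. For deeper $(i,j)$ the inhomogeneous term of the recursion mixes several previously computed components $K_r^{(i',j')}$ and $K_{r-1}^{(i',j')}$, and the claim that the resulting $a_\rho^{(r)}$ factor as $r$ times a function of $\rho$ alone is exactly what would need to be proved. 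Your sentence ``the only $r$-dependence comes through overall scalar factors'' is an assumption, not an argument; the paper makes no such claim and explicitly leaves the structure conjectural for all $(i,j)$ beyond $(1,1)$ (Theorem~\ref{th9} for $(1,0)$ is quoted from outside sources, and Theorem~\ref{th10} for $(2,0)$ likewise).

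Stage~2 you yourself diagnose correctly: the monomial positivity is the heart of the matter, it is open even at $\alpha=1$, and the signed nature of every available recursion blocks a direct induction. Your closing paragraph is an accurate statement of why the conjecture remains open, but it is not a proof. In short, the proposal identifies the right obstacles and the right shape a solution might take, but neither stage is carried out, and Stage~1 understates the difficulty by presenting an open structural conjecture as routine.
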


The assertion of Conjecture~\ref{conj5} is a direct consequence. Conjectures~\ref{conj6} and~\ref{conj7} are supported by computer calculations up to $r= 20$.

\subsection*{Example $j=0$} 

This is the only situation appearing in the classical framework $\al=1$. It has been investigated in~\cite{La2}, where the values of  $b_\rho^{(i,0)}$ are proved for $i=1,2$ and conjectured for $i=3,4$. There $K_r^{(i,0)}$ is written with a factor $\binom {r+1}{3}$ instead of $r$. Thus we have 
$g_{i\,0}=\frac{1}{6}(r+1)(r-1)g_i=\frac{1}{6}(m_1+2i)(m_1+2i-2)g_i$
with $g_i$ as in~\cite[Section 5]{La2}.

\subsection*{Example $i=j$} 

Values of $g_{i\,i}$ are conjectured to be
\begin{equation*}
\begin{split}
24 g_{22}&=3 m_{2}+2 m_{1^2}+2 m_{1},\\
96 g_{33}&=
m_{4}+2 m_{31}+3 m_{22}+3 m_{21^2}+3 m_{1^4}\\
&+4 m_{3}+6 m_{21}+6 m_{1^3}+5 m_{2}+5 m_{1^2}+2 m_{1},\\
27.4!.6! g_{44}&=
405 m_{6}+1350 m_{51}+2835 m_{42}+3630 m_{41^2}+3348 m_{3^2}+6012 m_{321}\\
&+7512 m_{31^3}
+8370 m_{2^3}+9876 m_{2^21^2}+11592 m_{21^4}+12880 m_{1^6}\\
&+4050 m_{5}+10890 m_{41}+18036 m_{32}+22536 m_{31^2}+29628 m_{2^21}+34776 m_{21^3}\\&+38640 m_{1^5}
+15525 m_{4}+32004 m_{31}+41742 m_{22}+48924 m_{21^2}+53880 m_{1^4}\\
&+28404 m_{3}+42444 m_{21}+45720 m_{1^3}
+24732 m_{2}+25272 m_{1^2}+8208 m_{1}.
\end{split}
\end{equation*}

\subsection*{Example $i=j+1$} 

We know $g_{1\,0}=m_1(m_1+2)/24=(m_2+2 m_{1^2}+2 m_1)/24$.
Other values of $g_{i,i-1}$ are conjectured to be
\begin{equation*}
\begin{split}
4.6! g_{21}&=
26 m_{4}+68 m_{31}+87 m_{22}+123 m_{21^2}+147 m_{1^4}\\
&+136 m_{3}+246 m_{21}+294 m_{1^3}+206 m_{2}+244 m_{1^2}+96 m_{1},\\
486.7! g_{32}&=
3240 m_{6}+12042 m_{51}+24057 m_{42}+35496 m_{41^2}+29808 m_{3^2}+57636 m_{321}\\
&+79848 m_{31^3}
+75618 m_{2^3}+103056 m_{2^21^2}+132336 m_{21^4}+157840 m_{1^6}\\
&+36126 m_{5}+106488 m_{41}+172908 m_{32}+239544 m_{31^2}+309168 m_{2^21}+397008 m_{21^3}\\&+473520 m_{1^5}
+151794 m_{4}+339660 m_{31}+436158 m_{22}+557838 m_{21^2}+659376 m_{1^4}\\
&+300348 m_{3}+482490 m_{21}+557568 m_{1^3}
+279936 m_{2}+306288 m_{1^2}+98496 m_{1}.
\end{split}
\end{equation*}

\subsection*{Example $i=j+2$} 

For $g_{2\,0}$ see~\cite[Section 6]{La2}.
For $g_{31}$ the values of $256 . 10! \, b_\rho^{(3,1)}$ are conjectured to be
\small

\hspace{1 cm}\\
\begin{tabular}{|c|c|c|c|c|c|c|c|}
\hline
8 & 71 & 62 & $61^2$ & 53 & 521 & $51^3$ & $4^2$ \\ \hline
48192 & 254976 & 673056 & 1133712 & 1153152 & 2548656 & 4133592 & 1376256 \\ \hline
431 & $42^2$ & $421^2$ & $41^4$ & $3^22$ & $3^21^2$ & $32^21$ & $321^3$ \\ \hline
3727584 & 4862736 & 7769952 & 12008844 & 5941920 & 9494112 & 12127104 & 18474444 \\ \hline
$31^5$ & $2^4$ & $2^31^2$ & $2^21^4$ & $21^6$ & $1^8$ \\ \cline{1-6}
26908920 & 15434568 & 23152176 & 33262758 & 45158040 & 57215025\\ \cline{1-6}
\end{tabular}

\hspace{0.3 cm}\\
\begin{tabular}{|c|c|c|c|c|c|c|c|}
\hline
7 & 61 & 52 & $51^2$ & 43 & 421 & $41^3$ & $3^21$ \\ \hline
1019904 & 4534848 & 10194624 & 16534368 & 14910336 & 31079808 & 48035376 & 37976448 \\ \hline
$32^2$ & $321^2$ & $31^4$ & $2^31$ & $2^21^3$ & $21^5$ & $1^7$\\ 
\cline{1-7}
48508416 & 73897776 & 107635680 & 92608704 & 133051032 & 180632160 & 228860100\\
\cline{1-7}
\end{tabular}

\hspace{0.3 cm}\\
\begin{tabular}{|c|c|c|c|c|c|c|c|c|c|c|}
\hline
6 & 51 & 42 & $41^2$ & $3^2$ & 321 & $31^3$ \\ \hline
8854656 & 32228736 & 60482976 & 93384432 & 73869696 & 143407392 & 208462320 \\ \hline
$2^3$ & $2^21^2$ & $21^4$ & $1^6$ \\ \cline{1-4}
179456688 & 257259384 & 348078420 & 438810750\\ \cline{1-4}
\end{tabular}

\hspace{0.3 cm}\\
\begin{tabular}{|c|c|c|c|c|c|c|}
\hline
5 & 41 & 32 & $31^2$ & $2^21$ & $21^3$ & $1^5$ \\ \hline
40731648 & 117349056 & 179508096 & 259792320 & 319432032 & 428942160 & 534655800 \\ \hline
\end{tabular}

\hspace{0.3 cm}\\
\begin{tabular}{|c|c|c|c|c|}
\hline
4 & 31 & $2^2$ & $21^2$ & $1^4$ \\ \hline
106852416 & 233134080 & 284840832 & 377240160 & 460393920 \\ \hline
\end{tabular}

\hspace{0.3 cm}\\
\begin{tabular}{|c|c|c|c|c|c|}
\hline
3 & 21 & $1^3$ & 2 & $1^2$ & 1  \\ \hline
159304704 & 248865408 & 292037760 & 124561152 & 134784000 & 39260160\\ \hline
\end{tabular}
\normalsize

\section{$\la$-rings}

In the classical Kerov-Biane framework, Goulden and Rattan~\cite{GR,R} considered the expansion of $K_r$ in terms of the indeterminates $C_k$. They conjectured that the components of weight $r-2i+1$ (which correspond to our $K_r^{i,0}$ when $\al=1$) have nonnegative rational coefficients in terms of the $C_k$'s.

It might be tempting to extend this conjecture to the $C$-expansion of any $K_r^{(i,j)}$ for $(i,j)\neq (0,0)$. However this property is \textit{not true}. When $j\neq 0$ the $C$-coefficients of $K_r^{(i,j)}$ \textit{may be negative}. 

It is already obvious that 
\begin{equation*}
K_r^{(1,1)}=\frac{r}{2}Q_r= \frac{r}{2}\sum_{|\rho|=r} 
{(-1)}^{l(\rho)} (l(\rho)-1)!  \, \mathcal{C}_\rho 
\end{equation*}
is not $C$-positive. But the property fails also for $K_r^{(2,2)}$ which has negative coefficients for $r\ge 5$. This is also the case for $K_r^{(2,1)}$
when $r\ge 8$, and for $K_r^{(4,4)}$ or $K_r^{(3,2)}$ for $r\ge 13$. The $C$-expansion of $K_r$ is therefore very different from the $R$ or $Q$-expansions. 

In the classical framework $\al=1$, we already observed~\cite[Sections 6-8]{La2} that the $R$ and $Q$-expansions of $K_r^{(i,0)}$ are connected through the Cauchy formulas (3.3)-(3.5). This is a general fact.

Actually if we compare both expressions of $K_r^{(i,j)}/r$ the equation
\[\sum_{|\rho|=r-2i+j+1} 
(2i-j-1)^{l(\rho)}\, g_{ij}(\rho)  \,  \mathcal{Q}_\rho=
\sum_{|\rho|=r-2i+j+1} 
(l(\rho)+2i-j-2)!\, f_{ij}(\rho)  \,  \mathcal{R}_\rho\]
is merely the abstract identity
\begin{equation*}
\begin{split}
\sum_{|\rho|=n} (-1)^{l(\rho)}\, g_{ij}(\rho)\,  (2i-j-1)^{l(\rho)} \, z_\rho^{-1}p_\rho(A)&=
\sum_{|\rho|=n} (-1)^{l(\rho)}\, f_{ij}(\rho) \, \frac{(l(\rho)+2i-j-2)!}{\prod_{i} m_i(\rho)!} \,h_\rho(A) \\
&=\sum_{|\rho|=n} f_{ij}(\rho) \, \binom{-2i+j+1}{l(\rho)} u_\rho h_\rho(A),
\end{split}
\end{equation*}
specialized at $A=\mathbf{A}_\la$. Moreover it has obvious links with the Cauchy formulas (3.3)-(3.5)
\[(-1)^ne_n(xA)= 
\sum_{|{\rho }| = n} 
(-1)^{l(\rho)}  \, x^{l(\rho)} \,z_\rho^{-1} p_{\rho}(A)=
\sum_{|\rho|=n} \binom{-x}{l(\rho)} u_\rho h_\rho(A).\]

Therefore it seems plausible that the conjectured positivity of $f_{ij}$ and $g_{ij}$ are \textit{two equivalent properties}, reflecting some abstract pattern of the theory of symmetric functions.

The powerful language of $\la$-rings seems necessary for a better understanding of the interplay between these expansions and Cauchy formulas. Here we shall not give details about $\la$-ring theory, and refer the reader to~\cite[Chapter 2]{Las} (or~\cite[Section 3]{La5} for a short survey). 

The simplest case case $i=j=2$ is already a very interesting example. We have conjectured 
\begin{equation*}
\begin{split}
24 \,f_{22}&=3 m_{2}+4 m_{1^2}+2 m_{1}=p_2+2p_1^2+2p_1,\\
24 \,g_{22}&=3 m_{2}+2 m_{1^2}+2 m_{1}=2p_2+p_1^2+2p_1.
\end{split}
\end{equation*}
The equality
\[K_r^{(2,2)}/r=\sum_{|\rho|=r-1} 
g_{22}(\rho)  \,  \mathcal{Q}_\rho=
\sum_{|\rho|=r-1} 
l(\rho)!\, f_{22}(\rho)  \,  \mathcal{R}_\rho\]
is then the specialization at the alphabet $\mathbf{A}_\la$ of
\[\sum_{|\rho|=n} (-1)^{l(\rho)}\, g_{22}(\rho)\, z_\rho^{-1}p_\rho=
\sum_{|\rho|=n}(-1)^{l(\rho)}\,f_{22}(\rho) u_\rho h_\rho,\]
which is itself a direct consequence of the following identities.
\begin{prop}~\label{pr9}
For any alphabet $A$ and any integer $n$, one has
\begin{equation*}
\begin{split}
\sum_{\begin{subarray}{c}(i,j,k) \in \mathsf{N}^3\\i+j+k=n
\end{subarray}} (-1)^i ih_ie_je_k&=-ne_n,\\
\sum_{\begin{subarray}{c}(i,j,k) \in \mathsf{N}^3\\i+j+k=n
\end{subarray}} (-1)^i i^2 h_ie_je_k 
&=  \sum_{|\rho|=n} (-1)^{n-l(\rho)}\, \big(n^2-2p_2(\rho)\big) \, z_\rho^{-1}p_\rho\\
&= -\sum_{|\rho|=n} (-1)^{n-l(\rho)}\,  p_2(\rho) \,u_\rho h_\rho.
\end{split}
\end{equation*}
\end{prop}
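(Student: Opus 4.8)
The plan is to prove both identities by manipulating the generating series $E(t)=\sum_{k\ge0}e_k(A)t^k$ and $H(t)=\sum_{k\ge0}h_k(A)t^k$, which satisfy $H(t)E(-t)=1$; equivalently, setting $f(t)=H(-t)$ one has $f(t)=1/E(t)$. The key point is that
\[\sum_{i\ge0}(-1)^i i\,h_i\,t^i=t\,f'(t),\qquad \sum_{i\ge0}(-1)^i i^2\,h_i\,t^i=\Bigl(t\tfrac{d}{dt}\Bigr)^2 f(t)=t\,f'(t)+t^2 f''(t),\]
so the two left-hand sides in Proposition~\ref{pr9} are, up to the factor $E(t)^2$, differential polynomials in $f$, and can therefore be rewritten in terms of $E(t)$ alone via $f=1/E$.

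For the first identity, $f=1/E$ gives $f'=-E'/E^2$, hence $t\,f'(t)\,E(t)^2=-t\,E'(t)=-\sum_k k\,e_k\,t^k$; extracting the coefficient of $t^n$ yields $\sum_{i+j+k=n}(-1)^i i\,h_i e_j e_k=-n\,e_n$. For the second identity I would introduce $L(t)=\log E(t)=\sum_{k\ge1}(-1)^{k-1}p_k t^k/k$ together with $P(t)=t\,L'(t)=\sum_k(-1)^{k-1}p_k t^k$ and $P_1(t)=t\,P'(t)=\sum_k k(-1)^{k-1}p_k t^k$; a short computation from $f=e^{-L}$ gives $(t\,f'+t^2 f'')\,E^2=E(t)\bigl(P(t)^2-P_1(t)\bigr)$, so that
\[\sum_{i+j+k=n}(-1)^i i^2\,h_i e_j e_k=[t^n]\,E(t)\bigl(P(t)^2-P_1(t)\bigr).\]
It then remains to identify this last series with the generating series of each of the two stated expansions.

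For the $p$-expansion I would use the Cauchy formula $[t^n]E(t)=e_n=\sum_{|\rho|=n}(-1)^{n-l(\rho)}z_\rho^{-1}p_\rho$, the identity $[t^n]\bigl(t\tfrac{d}{dt}\bigr)^2 E(t)=n^2 e_n=[t^n]E(P^2+P_1)$, and the observation that substituting $p_k\mapsto q^{k^2}p_k$ in the Cauchy series $E(t)=\exp\sum_k(-1)^{k-1}p_k t^k/k$ sends $z_\rho^{-1}p_\rho$ to $q^{p_2(\rho)}z_\rho^{-1}p_\rho$ while turning $E(t)$ into $\exp\sum_k(-1)^{k-1}q^{k^2}p_k t^k/k$; differentiating at $q=1$ then gives $\sum_n t^n\sum_{|\rho|=n}(-1)^{n-l(\rho)}p_2(\rho)z_\rho^{-1}p_\rho=E(t)P_1(t)$, whence $E(P^2+P_1)-2E P_1=E(P^2-P_1)$ is exactly $\sum_n t^n\sum_{|\rho|=n}(-1)^{n-l(\rho)}\bigl(n^2-2p_2(\rho)\bigr)z_\rho^{-1}p_\rho$. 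For the $h$-expansion I would instead write $E(t)=1/H(-t)=\sum_{l\ge0}\bigl(1-H(-t)\bigr)^l$ with $1-H(-t)=\sum_{k\ge1}(-1)^{k-1}h_k t^k$, which reproduces the ``second kind'' Cauchy formula $e_n=\sum_{|\rho|=n}(-1)^{n-l(\rho)}u_\rho h_\rho$; applying $h_k\mapsto q^{k^2}h_k$ and differentiating at $q=1$ inserts the weight $p_2(\rho)$ on one side and multiplies the series by $\sum_k(-1)^{k-1}k^2 h_k t^k=-\sum_i(-1)^i i^2 h_i t^i$ on the other, giving $\sum_n t^n\sum_{|\rho|=n}(-1)^{n-l(\rho)}p_2(\rho)u_\rho h_\rho=-E(t)^2\sum_i(-1)^i i^2 h_i t^i$; since the right-hand side is $-\sum_n t^n\sum_{i+j+k=n}(-1)^i i^2 h_i e_j e_k$, this yields $-\sum_{|\rho|=n}(-1)^{n-l(\rho)}p_2(\rho)u_\rho h_\rho=\sum_{i+j+k=n}(-1)^i i^2 h_i e_j e_k$ as well.

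The manipulations with $E,H,f,L$ are routine; the one step that needs care is the bookkeeping for the two weighted Cauchy expansions — recognising that the ``first kind'' and ``second kind'' Cauchy formulas for $e_n$ arise as coefficient extractions from $\exp\sum_k(-1)^{k-1}p_k t^k/k$ and from $\sum_{l\ge0}(1-H(-t))^l$ respectively, so that in both cases the single operation ``insert $q^{k^2}$ on the $k$-th building block, then $\partial_q$ at $q=1$'' simultaneously produces the weight $p_2(\rho)$ and, on the series side, the factor $P_1(t)$ (resp. $\sum_k(-1)^{k-1}k^2 h_k t^k$). I expect no genuine obstacle beyond keeping the signs straight.
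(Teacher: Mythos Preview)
Your argument is correct. The first identity is immediate from $tf'(t)E(t)^2=-tE'(t)$, and your derivation $(t\,d/dt)^2 f\cdot E^2=E(P^2-P_1)$ together with the two $q$-deformation tricks cleanly produces both expansions of the second identity; the sign bookkeeping and the identification of $\partial_q|_{q=1}$ on each side are all in order.

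The paper argues differently, using the $\lambda$-ring formalism. It introduces a one-letter alphabet $z$ (an element ``of type~1'', so that $p_n[-z+2]=-z^n+2$) and writes the \emph{single} Cauchy identity
\[
(-1)^n e_n[(-z+2)A]=\sum_{|\rho|=n}(-1)^{l(\rho)}z_\rho^{-1}p_\rho[-z+2]\,p_\rho[A]=\sum_{|\rho|=n}m_\rho[z-2]\,h_\rho[A].
\]
Since $e_n[(-z+2)A]=\sum_{i+j+k=n}(-1)^iz^ih_ie_je_k$, one $z$-differentiation at $z=1$ gives the first formula, and two differentiations give both the $p$- and the $h$-expansions of the second, via the explicit evaluations of $\partial_z^2 p_\rho[-z+2]\big|_{z=1}$ and $\partial_z^2 m_\rho[z-2]\big|_{z=1}$.

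So the paper obtains everything from one parametrized $\lambda$-ring identity, which is conceptually tidy but presupposes the plethystic machinery. Your route is more elementary: pure generating-function calculus plus the ``substitute $p_k\mapsto q^{k^2}p_k$ (resp.\ $h_k\mapsto q^{k^2}h_k$) and differentiate at $q=1$'' device to manufacture the weight $p_2(\rho)$. The two parameters $z$ and $q$ play analogous roles, but yours avoids $\lambda$-rings entirely at the cost of treating the $p$- and $h$-expansions by separate (though parallel) tricks.
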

\begin{proof}[Sketch of proof]
If $f$ is a symmetric function, we denote by $f[A]$ its $\la$-ring action on the alphabet $A$, which should not be confused with its evaluation $f(A)$.

We consider a one-variable alphabet $z$ such that $h_k(zA)=z^k h_k(A)$. In $\la$-ring terminology $z$ is an ``element of type 1'' and we have $p_n[-z+2]=-z^n+2$. We start from the two ``Cauchy formulas''
\begin{equation*}
\begin{split}
(-1)^n e_n [(-z+2)A]&= \sum_{|\rho| = n} 
(-1)^{l(\rho)}  z_\rho^{-1} p_\rho [-z+2]\, p_\rho [A],\\
&= \sum_{|\rho| = n} m_\rho [z-2] \, h_\rho [A].
\end{split}
\end{equation*}
If we $z$-differentiate once and fix $z=1$, we obtain the first statement. If we differentiate two times and fix $z=1$, we obtain at the left-hand side 
\[(-1)^n \sum_{i+j+k=n} (-1)^i i(i-1) h_ie_je_k.\]
At the right-hand side, we compute directly
\[\partial^2_z\big(p_\rho[-z+2]\big)\Big|_{z=1}=
\partial^2_z\big(\prod_{i\ge 1} (-z^i+2)^{m_i(\rho)}\big)\Big|_{z=1}
=n^2+n-2p_2(\rho).\]
Since ``augmented'' monomial symmetric functions $\prod_i m_i(\rho)!\,m_\rho$ are integral combinations of power sums~\cite[p. 110]{Ma}, we may obtain similarly
\[\prod_{i\ge 1} m_i(\rho)!\, \partial^2_z
\big(m_{\rho}[z-2]\big)\Big|_{z=1}= (-1)^{l(\rho)} l(\rho)!\big(n-p_2(\rho)\big).\]
Combining both results achieves the proof.
\end{proof}

By specialization of the second statement at $A=\mathbf{A}_\la$, we see that the conjectured values of $f_{22}$ and $g_{22}$ are equivalent with
\[K_r^{(2,2)}=\frac{r}{24}\Big(2r(r-1) C_{r-1}+
\sum_{\begin{subarray}{c}(i,j,k) \in \mathsf{N}^3\\i+j+k=r-1
\end{subarray}}i^2(i-1)R_iC_jC_k\Big),\]
which explains why $K_r^{(2,2)}$ cannot be $C$-positive. 

For a similar example with $K_r^{(2,0)}$ see~\cite[Sections 6-8]{La2}. These two examples remain elementary because they only involve symmetric functions of degree $2$.  

In general the study of $K_r^{(i,j)}$ may be done by differentiating $4i-2j-2$ times the Cauchy formulas
\begin{equation*}
\begin{split}
(-1)^n e_n [(-z+2i-j)A]&= \sum_{|\rho| = n} 
(-1)^{l(\rho)}  z_\rho^{-1} p_\rho [-z+2i-j]\, p_\rho [A],\\
&= \sum_{|\rho| = n} m_\rho [z-2i+j] \, h_\rho [A],
\end{split}
\end{equation*}
and fixing $z=1$. On the right-hand side one must compute
\begin{equation*}
\begin{split}
\partial^k_z\big(p_\rho[-z+2i-j]\big)\Big|_{z=1} &=
(2i-j-1)^{l(\rho)}\, F_k(\rho),\\
\prod_{i\ge 1} m_i(\rho)!\, \partial^k_z
\big(m_{\rho}[z-2i+j]\big)\Big|_{z=1}&=
(-1)^{l(\rho)} (l(\rho)+2i-j-2)! \, G_k(\rho),
\end{split}
\end{equation*}
with $k\le 4i-2j-2$ and $F_k,G_k$ some symmetric functions of degree $k$.

For $i=j=3$ this method shows that the conjectured values of $f_{33}$ and $g_{33}$ are equivalent with the beautiful \textit{nonnegative} expansion
\[K_r^{(3,3)}=\frac{r}{48}
\sum_{\begin{subarray}{c}(i,j) \in \mathsf{N}^2\\i+j=r-2
\end{subarray}}i(i+1)^2(i+2)\,C_iC_j.\]

\section{The general case}

When the partition $\mu$ is not a row, the $R$-coefficients of $K_\mu$ are still conjectured to be polynomials in $(\al,\beta)$ with integer coefficients. However these integers may be negative. For instance 
\[K_{22}=\al^4R_3^2+2\al^3\beta R_3R_2-\al^3(4R_4+2R_2^2)+\al^2\beta^2R_2^2
-10\al^2\beta R_3-(2\al^2+6\al\beta^2)R_2.\]

This fact already appears in the classical framework $\al=1$. In this case, F\'eray~\cite{F} observed that another family of polynomials $\tilde{K}_{\mu}$ can be inductively defined by
\[K_\mu=\sum_{k=1}^{l(\mu)} (-1)^{l(\mu)-k} \sum_{(\nu_1,\ldots, \nu_k)} \prod_{i=1}^k  \tilde{K}_{\nu_i},\]
where the second sum is taken over all decompositions of the $l(\mu)$ parts of $\mu$ into $k$ disjoint partitions $(\nu_1,\cdots,\nu_k)$. For instance one has 
\begin{equation*}
\begin{split}
K_r=\tilde{K}_{r},&\quad \quad K_{r,s}=K_rK_s-\tilde{K}_{r,s},\\
K_{r,s,t}=K_rK_sK_t-&K_r\tilde{K}_{s,t}-K_s\tilde{K}_{r,t}-K_t\tilde{K}_{r,s}+\tilde{K}_{r,s,t}.
\end{split}
\end{equation*}
The first values of $\tilde{K}_{\mu}$ are listed in the introduction and those of $\tilde{K}_{rs}$ for $r+s\le 18$ on a web page~\cite{W}.

For $\al=1$ F\'eray~\cite{F} proved the $R$-coefficients of $\tilde{K}_{\mu}$ to be nonnegative. Remarkably this property seems true when $\al\neq1$. We conjecture the structure of $\tilde{K}_{\mu}$ to be very similar to the one of $K_r$.
\begin{conj}
The highest weight of $\tilde{K}_{\mu}$ is $|\mu|-l(\mu)+2$ and we have
\[\tilde{K}_\mu = \sum_{\begin{subarray}{c}0\le j \le i \le |\mu|-l(\mu) \\ 2i-j \le |\mu|-l(\mu)\end{subarray}}
 \al^{|\mu|-l(\mu)+1-i} \beta^j \sum_{|\rho|=|\mu|-l(\mu)+2-2i+j}
a_{ij}(\rho)  \,  R_{\rho},\]
where the coefficients $a_{ij}(\rho)$ are nonnegative integers.
\end{conj}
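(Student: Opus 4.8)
This being a conjecture, I sketch only the route I would attempt, by induction on $|\mu|-l(\mu)$. The base case is the row case $\mu=(r)$, where $\tilde K_r=K_r$ and the assertion is exactly Conjecture~\ref{conj2}, the top-weight and subtop-weight layers having been pinned down in Proposition~\ref{pr6} and Theorems~\ref{th7}--\ref{th8}. For the inductive step one may assume the statement known for every $\nu$ with $|\nu|-l(\nu)<|\mu|-l(\mu)$; since $\mu$ has no part $1$, in any decomposition $(\nu_1,\ldots,\nu_k)$ with $k\ge 2$ each block $\nu_i$ again has no part $1$, so $|\nu_i|-l(\nu_i)\ge l(\nu_i)\ge 1$, and $\sum_i(|\nu_i|-l(\nu_i))=|\mu|-l(\mu)$ then forces $|\nu_i|-l(\nu_i)<|\mu|-l(\mu)$ strictly for every $i$. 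Hence in the F\'eray relation $K_\mu=\sum_{k=1}^{l(\mu)}(-1)^{l(\mu)-k}\sum_{(\nu_1,\ldots,\nu_k)}\prod_i\tilde K_{\nu_i}$ the only unknown is the single-block term $(-1)^{l(\mu)-1}\tilde K_\mu$; solving for it, the problem splits into (A) understanding $K_\mu$ itself well enough, and (B) controlling the alternating sum of products.

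For the structural part I would first extend the bookkeeping of Sections~9--10 from rows to a general $\mu$ with no part $1$. Running the algorithm of Section~9, the sole source of the parameter $\beta$ is Theorem~\ref{th6}, where the correction $R_\rho(\la^{(i)})-R_\rho(\la)$ is bihomogeneous in $(1/\al,\beta)$ once $R_k$ is graded by weight $k$; together with the weight-preserving, integer-coefficient passage (5.3) from moments to free cumulants and with $\sum_i c_i(\la)=1$, this should force $K_\mu$ to carry a bigraded expansion of the predicted shape, with top weight $|\mu|+l(\mu)$ as in Proposition~\ref{pr6}(ii). One then identifies its top weight layers, which are the general-$\mu$ analogues of Theorems~\ref{th7}--\ref{th8} and should follow from the Lagrange-involution identities of Section~4 exactly as in Section~10. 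Feeding this into the F\'eray inversion, the induction hypothesis gives $\prod_i\tilde K_{\nu_i}$ top weight $\le|\mu|-l(\mu)+2k$, the value $|\mu|+l(\mu)$ being reached only by the term $k=l(\mu)$, which equals $\prod_i K_{\mu_i}$ and matches the top of $K_\mu$. The assertion that $\tilde K_\mu$ has top weight only $|\mu|-l(\mu)+2$ then amounts to the cancellation of all higher-weight contributions between $K_\mu$ and the $k\ge 2$ terms, i.e. to the fact that $\tilde K_\mu$ is a genuine cumulant-type (``connected'') quantity; I would establish it by checking that the computed top layers of $K_\mu$ are precisely the ``disconnected'' products produced by those terms. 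The admissible ranges $0\le j\le i\le|\mu|-l(\mu)$, $2i-j\le|\mu|-l(\mu)$ for $\tilde K_\mu$ are then inherited from the corresponding ranges for $K_\mu$ and for the $\tilde K_{\nu_i}$.

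The integrality and, above all, the nonnegativity of the $a_{ij}(\rho)$ is the real obstacle. The F\'eray inversion is an alternating sum, and the algorithm of Section~9 is likewise not manifestly positive, so positivity cannot be read off either one. For $\al=1$ F\'eray deduces it from a combinatorial model in which $\tilde K_\mu$ enumerates certain maps/factorizations; the honest plan is therefore to construct the $(\al,\beta)$-refinement of that model conjectured in the introduction — attaching $\al$ and $\beta$ to combinatorial statistics, plausibly with $\beta$ weighting a non-orientability or ``defect'' parameter — so that nonnegativity, integrality, and the whole shape would come out together. Failing that, one could try to strengthen the induction into a manifestly positive recursion for $\tilde K_\mu$ directly; but since no known recursion has that property, this would amount to discovering a new one. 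I expect proving nonnegativity to be essentially as hard as establishing the conjectured $(\al,\beta)$-scheme itself, and that is the step I would flag as the principal difficulty.
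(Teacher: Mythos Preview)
The paper offers no proof of this statement: it is presented purely as a conjecture, supported only by the explicit computations of $\tilde K_\mu$ for $|\mu|\le 6$ listed in the introduction and the tables of $\tilde K_{rs}$ for $r+s\le 18$ announced in~\cite{W}. There is therefore nothing to compare your attempt against; what you have written is a proof \emph{strategy} for an open problem, and you flag this yourself.

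Your outline is a sensible plan of attack, and you correctly isolate the essential difficulty (nonnegativity via an alternating F\'eray inversion). Two points are worth making explicit. First, what you call the ``base case'' --- the row case $\tilde K_r=K_r$ --- is itself Conjecture~\ref{conj2}, which is also open in the paper; Theorems~\ref{th7}--\ref{th8} only determine the two top layers $K_r^{(0,0)}$ and $K_r^{(1,1)}$. So your induction is conditional on Conjecture~\ref{conj2}, not merely on the proved results of Section~10. Second, the structural part of the conjecture (the shape of the $(\al,\beta)$-bigrading and the top weight $|\mu|-l(\mu)+2$) is a separate and presumably more accessible target than the positivity; the paper does not claim even this much is proved, so carrying through the bookkeeping you describe for the weight cancellation in the F\'eray relation would already be new.
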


However $\tilde{K}_\mu^{(i,j)}$, the coefficient of $\al^{|\mu|-l(\mu)+1-i} \beta^j$, is much more complicated than $K_r^{(i,j)}$. In particular the term of highest weight $\tilde{K}_\mu^{(0,0)}$ is no longer a monomial. Also it is not clear which $\tilde{K}_\mu^{(i,j)}$ have nonnegative $Q$-coefficients. At least $\tilde{K}_\mu^{(0,0)}$ and $\tilde{K}_\mu^{(1,1)}$ may have negative ones. 

\section{Extension to arbitrary $\beta$}

In this article $\beta$ was of course never considered as being independent of $\al$. However, as already emphasized, the restriction $\al+\beta=1$ is totally unnecessary. All the results obtained above may be extended when $\al$ and $\beta$ are \textit{independent} parameters. 

We now present this elegant generalization. Let $\zeta$ and $\eta$ be two real numbers with $\zeta<0$ and $\eta>0$. Define
$$\al=-\frac{1}{\zeta \eta},\qquad \beta=\frac{1}{\zeta}+\frac{1}{\eta}.$$
In other words, $\zeta$ and $\eta$ are the roots of $\al x^2+\beta x-1=0$. When $\beta=1-\al$ as above, we have obviously $\eta=1$ and $\zeta=-1/\al$. 

Our advocated generalization is obtained by appropriate substitutions of $\eta$ and $\zeta$ instead of $1$ and $-1/\al$. Incidentally this makes formulas become very symmetrical.

Firstly let us introduce the $(\zeta,\eta)$-transition measure of any Young diagram. For any partition $\la$, the $(\zeta,\eta)$-content of a node $(i,j)\in \la$ is defined as $(i-1)\zeta+(j-1)\eta$. Accordingly the generalized rising factorial is
\[(x)_{\la}=\prod_{(i,j) \in \la} \left(x+(i-1)\zeta+(j-1)\eta\right).\]

The $(\zeta,\eta)$-contents of the inside and outside corners of the Young diagram of $\la$ define a pair of interlacing sequences
\begin{equation*}
x_k(\zeta,\eta)=\la^\prime_k\,\zeta+(k-1)\eta,\quad \quad
y_k(\zeta,\eta)=\la^\prime_k\,\zeta+k\eta,
\end{equation*}
with $1\le k \le \la_1=d-1$ and $x_d(\zeta,\eta)=\la_1\eta$. Here we maintain the convention (5.1) that $x_i$ and $y_{i-1}$ should be omitted whenever $x_i=y_{i-1}$. This pair has center $0$. 

The $(\zeta,\eta)$-transition measure is a measure $\omega_\la$ on the real line, supported on the set $\{x_1(\zeta,\eta),\ldots,x_d(\zeta,\eta)\}$. It is uniquely described by its moment generating series
\[\mathcal{M}_\la(z)= \frac{1}{z-x_d(\zeta,\eta)}\prod_{i=1}^{d-1}\frac{z-y_i(\zeta,\eta)}{z-x_i(\zeta,\eta)}=z^{-1}H_{1/z}(A_\la),\]
with $A_\la=I_\la-O_\la$ the difference of the alphabets \[I_\la=\{x_1(\zeta,\eta),\ldots,x_d(\zeta,\eta)\},\quad \quad 
O_\la=\{y_1(\zeta,\eta),\ldots, y_{d-1}(\zeta,\eta)\}.\]
The moments of $\omega_\la$ are $M_k(\la)=h_k(A_\la)$ and its free cumulants are $R_k(\la)=(-1)^ke_k^*(A_\la)$.

Exactly as in Section 6, the moment series may be written
\begin{equation}
\mathcal{M}_\la(z)=z^{-1} \,
\frac{(-z+\zeta+\eta)_\la}{(-z+\zeta)_\la}\, 
\frac{(-z)_\la}{(-z+\eta)_\la}.
\end{equation}
This expression has several consequences. Firstly the moments $M_k(\la)$ and the free cumulants $R_k(\la)$ are polynomials in $(\zeta,\eta)$. Secondly for any integer $k\ge 0$ we have
\begin{equation}
M_k(\la)= \sum_{i=1}^{l(\la)+1} 
c_i(\la) \,\big((i-1)\zeta+\la_i\,\eta\big)^k,
\end{equation}
where the weights $c_i(\la)$ are the transition probabilities
\[c_i(\la) = \frac {\zeta}{(l(\la)-i+2)\zeta-\la_i\,\eta}
\prod_{\begin{subarray}{c}j=1 \\ j \neq i\end{subarray}}^{l(\la)+1} 
\frac{(j-i+1)\zeta+(\la_j-\la_i)\eta}
{(j-i)\zeta+(\la_j-\la_i)\eta}.\]
Equivalently we have
\[\mathcal{M}_\la(z)=\sum_{i=1}^{l(\la)+1} 
 \frac{c_i(\la)}{z-(i-1)\zeta-\la_i\,\eta}.\]

Now we have the following fundamental remark. Although moments and free cumulants are polynomials in $(\zeta,\eta)$, \textit{all formulas of our algorithm keep unchanged} because these formulas only involve $\zeta+\eta$ and $\zeta\eta$, hence $\al$ and $\beta$.

This is proved very easily. Denote $x_i=(i-1)\zeta+\la_i\,\eta$. As in Proposition~\ref{pr1}, relation (16.1) implies
\[H_{1/z}(A_{\la^{(i)}})=H_{1/z}(A_\la)\,
\frac{-z+x_i+\zeta+\eta}{-z+x_i+\zeta} \,\frac{-z+x_i}{-z+x_i+\eta}.\]
Equivalently $A_{\la^{(i)}}=A_\la+B(x_i)$, with $B(v)$ the alphabet $\{v+\zeta,v+\eta\}-\{v,v+\zeta+\eta\}$.
But as in Proposition~\ref{pr2}, with $y=(z-v)^{-1}$ we have
\begin{equation*}
\begin{split}
H_{1/z}(uB(v))&=
\left(\frac{1-y(\zeta+\eta)}{1-y\zeta} \,\frac{1}{1-y\eta}\right)^u\\
&=\left(1-\frac{y^2/\al}{1+y\beta/\al}\right)^{-u}.
\end{split}
\end{equation*}
In other words, the symmetric functions of $uB(v)$ depend only on $\al$ and $\beta$. This implies that Propositions~\ref{pr1}-\ref{pr4} and Theorem~\ref{th6} keep formally the same provided the quantity $x_i=\la_i-(i-1)/\al$ is replaced by $x_i=(i-1)\zeta+\la_i\,\eta$.

Here is an elementary example. For $\la=(r,s,t)$ relations (5.4) and (16.2) yield easily 
\begin{equation*}
\begin{split}
R_2(\la)&=-\zeta\eta(r+s+t),\\
R_3(\la)&=-\zeta\eta\big((r+3s+5t)\zeta+(r^2+s^2+t^2)\eta\big),\\
R_4(\la)&=-\zeta\eta\big((r+7s+19t)\zeta^2+(3r^2+6s^2+9t^2+3rs+3st+3rt)\zeta\eta
+(r^3+s^3+t^3)\eta^2\big).
\end{split}
\end{equation*}
But for $1\le i\le 3$ these quantities still satisfy
\begin{equation*}
\begin{split}
\al R_2(\la^{(i)})&=\al R_2(\la)+1,\\
\al^2R_3(\la^{(i)})&=\al^2R_3(\la)+2\al x_i-\beta,\\
\al^3R_4(\la^{(i)})&=\al^3R_4(\la)+3\al^2x_i^2-3\al\beta x_i-3\al^2R_2(\la)+\beta^2-\al,
\end{split}
\end{equation*}
with $x_1=r\eta$, $x_2=\zeta+s\eta$, $x_3=2\zeta+t\eta$.

Incidentally these formulas also show, as in Proposition~\ref{pr4b}, that $-R_k(\la)$ is a polynomial in $(\zeta,\eta)$ with \textit{nonnegative} integer coefficients. This property does not hold for moments.

Since our algorithm keeps formally the same, we may define quantities $\vartheta^\la_\mu(\zeta,\eta)$ inductively by the system
\begin{equation*}
\begin{split}
\sum_{i=1}^{l(\la)+1} 
c_i(\la) \, \vartheta^{\la^{(i)}}_\mu&=\vartheta^\la_\mu,\\
\sum_{i=1}^{l(\la)+1} 
c_i(\la)  \,\big((i-1)\zeta+\la_i\,\eta\big)\,
\vartheta^{\la^{(i)}}_\mu&=2 (|\la|-|\mu|+2)\,m_2(\mu)\,\vartheta^\la_{{\mu \setminus 2}}+
\sum _{r\ge 3}r\,m_r(\mu)\,\vartheta^\la_{\mu_{\downarrow (r)}}.
\end{split}
\end{equation*}
Obviously these quantities $\vartheta^\la_\mu$ depend on $(\zeta,\eta)$. However their transition matrix with the free cumulants depends only on $(\alpha,\beta)$. In this context, all results and conjectures of Sections 8 to 14 keep relevant, without any change. 

Finally let us mention another conjecture, which does not involve free cumulants. Consider the finite alphabet of $(\zeta,\eta)$-contents
\[C_\la=\{(i-1)\zeta+(j-1)\eta,\,(i,j) \in \la\}\]
and its power sums
\[p_k(C_\la) = \sum_{(i,j) \in \la} \big((i-1)\zeta+(j-1)\eta\big)^k\qquad (k\ge 1).\]
Again the quantities $\vartheta^\la_\mu$ and the power sums $p_\rho(C_\la)$ depend on $(\zeta,\eta)$, but their transition matrix depends only on $(\alpha,\beta)$.

\begin{conj} 
The quantity $\vartheta^\la_\mu(\zeta,\eta)$ is a polynomial in power sums $p_0(C_\la)=|\la|$ and $\{p_k(C_\la),k\ge 1\}$. Once written in terms of $\binom{|\la|}{r}$ instead of $|\la|^r$, its coefficients are polynomials in $(\alpha,\beta)$ with integer coefficients. 
\end{conj}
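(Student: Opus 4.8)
The plan is to establish the statement by the same inductive machinery used throughout the paper for the free cumulant expansion, only now tracking the power sums $p_k(C_\la)$ directly instead of passing through the $R_k(\la)$. The starting point is the system in Section 3, rewritten in the $(\zeta,\eta)$-variables as in Section 15:
\[
\sum_{i=1}^{l(\la)+1} c_i(\la)\,\vartheta^{\la^{(i)}}_\mu=\vartheta^\la_\mu,\qquad
\sum_{i=1}^{l(\la)+1} c_i(\la)\,\bigl((i-1)\zeta+\la_i\eta\bigr)\vartheta^{\la^{(i)}}_\mu
= 2(|\la|-|\mu|+2)m_2(\mu)\vartheta^\la_{\mu\setminus 2}+\sum_{r\ge 3}r\,m_r(\mu)\vartheta^\la_{\mu_{\downarrow(r)}}.
\]
By Theorem~\ref{th1} this system, together with $\vartheta^{(1)}_0=1$, determines all the $\vartheta^\la_\mu$ uniquely. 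The inductive parameter is $|\mu|-l(\mu)$ (with $\mu$ taken with no part $1$, as is legitimate), exactly as in Section 9; the base case $\mu=(2)$ is settled by the explicit computation in Section 7, where $\vartheta^\la_2=\al^2R_3(\la)+\al\beta R_2(\la)$, and by (16.1)--(16.2) the $R_k$ are themselves polynomials in the $p_k(C_\la)$ with the required coefficient shape (this last reduction is the content of Theorem~\ref{th5}, whose proof via the explicit formula for $s_k(\la)$ already exhibits the binomial structure in $|\la|$).

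The key step is to show that the "add a node" operation $\la\mapsto\la^{(i)}$ acts nicely on the power sums $p_k(C_\la)$. Since $C_{\la^{(i)}}=C_\la\cup\{(i-1)\zeta+(\la_i-1+1-1)\eta\}$... more precisely the new node $(i,\la_i+1)$ contributes the single $(\zeta,\eta)$-content $c_i:=(i-1)\zeta+\la_i\eta$, we have
\[
p_k(C_{\la^{(i)}})=p_k(C_\la)+c_i^{\,k},\qquad p_0(C_{\la^{(i)}})=|\la|+1 .
\]
Thus $p_k(C_{\la^{(i)}})-p_k(C_\la)=c_i^{\,k}$ is a pure monomial in $c_i$ of degree $k$ — cleaner than the corresponding formula for free cumulants in Theorem~\ref{th6}. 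One then expresses $\vartheta^{\la^{(i)}}_\mu-\vartheta^\la_\mu$ by writing $\vartheta^\la_\mu$ (by the inductive hypothesis applied to the right-hand sides $\vartheta^\la_{\mu\setminus 2}$, $\vartheta^\la_{\mu_{\downarrow(r)}}$) as a polynomial $\sum_\rho a^\mu_\rho(\al,\beta)\,p_\rho(C_\la)$, substitutes, and uses the telescoping identity
\[
p_\rho(C_{\la^{(i)}})-p_\rho(C_\la)=\sum_{p\;\mathrm{part}\;\mathrm{of}\;\rho} m_p(\rho)\,c_i^{\,p}\,p_{\rho\setminus p}(C_\la)+\cdots,
\]
so that each occurrence of $c_i$ is of the form $c_i^{\,p}$. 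Feeding this into the two equations of the system and applying Theorem~\ref{th4} in its $(\zeta,\eta)$-form, i.e. $M_k(\la)=\sum_i c_i(\la)c_i^{\,k}$ (equation (16.2)) — but here with $c_i^{\,k}$ replaced by $p_\rho(C_\la)$-weighted combinations — converts $\sum_i c_i(\la)c_i^{\,k}\,[\text{stuff}]$ into $M_k(\la)\cdot[\text{stuff}]$, and the $M_k(\la)$ are themselves polynomials in the $p_j(C_\la)$ via the $\la$-ring Cauchy formulas of Section 4 specialized at $A=A_\la$. One is then left, after identification of coefficients of the linearly independent monomials in the $p_j(C_\la)$, with a linear system for the unknowns $a^\mu_\rho(\al,\beta)$ whose solution is unique (again by Theorem~\ref{th1}) and, because every coefficient entering the system lies in $\mathbf Z[\al,\beta]$ once one uses $\binom{|\la|}{r}$ in place of $|\la|^r$, has entries in $\mathbf Z[\al,\beta]$.

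The main obstacle is the integrality of the solution $a^\mu_\rho(\al,\beta)\in\mathbf Z[\al,\beta]$, rather than merely $\mathbf Q(\al,\beta)$. The linear system produced above has a nontrivial discriminant (in Section 3 it is $c_d(\la)c_{d+1}(\la)(u+1/\al)$, and in the $(\zeta,\eta)$-form a product of analogous factors), so Cramer's rule a priori only gives rational functions. To get integrality one must argue, as in the discussion following Theorem~\ref{th1}, that the recursion can be \emph{solved without division} by the two inductions (first on $l(\la)$, then on $\la_{l(\la)}$): at each step one adds a node and the two equations express $\vartheta$ at the two new shapes as $\mathbf Z[\al,\beta]$-combinations of already-known quantities once the combinatorial factors $2(|\la|-|\mu|+2)m_2(\mu)$, $r\,m_r(\mu)$, and the Pieri/content data are cleared of denominators using the known integrality of $\vartheta^\la_\mu$ as a polynomial in $\al$ (cited in the introduction from \cite{KS}). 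The delicate point is to track how $p_0(C_\la)=|\la|$ enters: powers of $|\la|$ must be regrouped as $\binom{|\la|}{r}$, and one needs that the coefficient of each such binomial, after the regrouping, is a \emph{polynomial} in $\al$ and $\beta$ (not just in $\al$ with $\beta$ an independent formal parameter) — this is exactly the place where one invokes the "all formulas involve only $\zeta+\eta$ and $\zeta\eta$" principle of Section 15 together with the Section~4 identities. I expect that once the bookkeeping of the $|\la|$-dependence is set up carefully (following \cite[Corollary 5.2]{La1}), the integrality falls out, but this is the step that requires genuine care rather than routine manipulation.
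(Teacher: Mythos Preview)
The statement you are attempting to prove is labeled a \emph{conjecture} in the paper, and the paper offers no proof of it---only the four supporting examples $\vartheta^\la_2,\vartheta^\la_3,\vartheta^\la_{2,2},\vartheta^\la_4$. There is therefore nothing to compare your proposal against; it must stand or fall on its own.

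Your outline follows the paper's inductive machinery sensibly, and the observation that $p_k(C_{\la^{(i)}})-p_k(C_\la)=c_i^{\,k}$ is indeed cleaner than Theorem~\ref{th6}. But the argument has a genuine gap precisely where you flag it: the integrality step. Two specific problems. First, your claim that the recursion of Theorem~\ref{th1} ``can be solved without division'' is not supported: the $2\times 2$ system in that proof has discriminant $c_d(\la)c_{d+1}(\la)(u+1/\al)$, and one must divide by it; nothing in the paper shows the resulting denominators cancel at the level of the $p_\rho(C_\la)$-coefficients. Second, your appeal to~\cite{KS} for integrality of $\vartheta^\la_\mu$ in $\al$ is only valid in the Jack case $\beta=1-\al$; for independent $(\al,\beta)$---which is the whole point of Section~15---there is no analogue of Knop--Sahi available, so that citation gives you nothing here. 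The ``all formulas involve only $\zeta+\eta$ and $\zeta\eta$'' principle guarantees the coefficients lie in $\mathbf{Q}(\al,\beta)$, but the jump to $\mathbf{Z}[\al,\beta]$ after the binomial regrouping is exactly the conjectural content, and your sketch does not bridge it.

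In short: the first assertion of the conjecture (polynomiality in the $p_k(C_\la)$ with coefficients in $\mathbf{Q}(\al,\beta)$) may well yield to your approach with some more work on the transition between $R_k(\la)$ and $p_k(C_\la)$ in the $(\zeta,\eta)$ setting, but the integrality assertion remains open and your proposal does not close it.
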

For instance we have
\begin{equation*}
\begin{split}
\vartheta_2^\la&=2\al\, p_1(C_\la),\\
\vartheta_3^\la&=3\al^2\, p_2(C_\la)+3\al\beta\,p_1(C_\la)-3\al\binom{|\la|}{2},\\
\vartheta_{2,2}^\la& =-12\al^2\, p_2(C_\la)+4\al^2\,p_{11}(C_\la)+8\al\beta\,p_1(C_\la)+8\al\binom{|\la|}{2},\\
\vartheta_4^\la&=4\al^3\, p_3(C_\la)+12\al^2\beta\,p_2(C_\la)+\big(8\al\beta^2-4\al^2(2|\la|-3)\big)\,p_1(C_\la)-8\al\beta\binom{|\la|}{2}.
\end{split}
\end{equation*}

\section{Final remarks}

We conclude by two remarks. Firstly the existence of quantities $\vartheta^\la_\mu$ associated with two independent parameters $(\al,\beta)$ may lead to a generalization of Jack polynomials. Actually these quantities $\vartheta^\la_\mu$ may be used in a sum extending (3.1). However the precise form of this extension is not yet obvious. 

Secondly the existence of a combinatorial scheme, underlying the theory of Jack polynomials, has been suspected for a long time, though it remains mysterious. Our results give strong evidence for the existence of a pattern involving free cumulants, with $\al$ and $\beta$ playing similar roles. Such a combinatorial interpretation has been recently obtained by F\'eray~\cite{F} for $\al=1$. We expect a generalization to Jack polynomials.


\begin{thebibliography}{29}
\bibitem{B1}
P.\ Biane, \emph{Representations of symmetric groups and free probability}, Adv. Math. \textbf{138} (1998), 126Ð-181.
\bibitem{B2}
P.\ Biane, \emph{Characters of symmetric groups and free cumulants}, Lecture Notes in Math. \textbf{1815} (2003), 185Ð-200, Springer, Berlin, 2003. 
\bibitem{B3}
P.\ Biane, \emph{On the formula of Goulden and Rattan for Kerov polynomials}, S\'em. Lothar. Combin. \textbf{55} (2006), article B55d.
\bibitem{F2}
M.\ Dolega, V.\ F\'eray, P.\ \.{S}niady, \emph{Explicit combinatorial interpretation of Kerov character polynomials as numbers of permutation factorizations}, arXiv 0810.3209.
\bibitem{F}
V.\ F\'eray, \emph{Combinatorial interpretation and positivity of Kerov's character polynomials}, arXiv 0710.5885.
\bibitem{GJ}
I.\ P.\ Goulden, D.\ M.\ Jackson,  \emph{Combinatorial enumeration}, Wiley Interscience, New York, 1983.
\bibitem{GR}
I.\ P.\ Goulden, A.\ Rattan, \emph{An explicit form for Kerov's character polynomials}, Trans. Amer. Math. Soc. \textbf{359} (2007), 3669--3685.
\bibitem{K1}
S.\ V.\ Kerov, \emph{Transition probabilities for continual Young diagrams and the Markov moment problem}, Funct. Anal. Appl. \textbf{27} (1993), 104Ð-117.
\bibitem{K3}
S.\ V.\ Kerov, \emph{Anisotropic Young diagrams and Jack symmetric functions}, Funct. Anal. Appl. \textbf{34} (2000), 41Ð-51.
\bibitem{K2}
S.\ V.\ Kerov, talk at IHP Conference (2000).
\bibitem{KS}
F.\ Knop, S.\ Sahi, \emph{A recursion and a combinatorial formula for Jack polynomials}, Invent. Math. \textbf{128} (1997), 9--22.
\bibitem{Las}
A.\ Lascoux, \emph{Symmetric functions and combinatorial operators on
polynomials}, CBMS Regional Conference Series in Mathematics \textbf{99}, Amer. Math. Soc., Providence, 2003.
\bibitem{La5}
M.\ Lassalle, \emph{Une $q$-sp\'ecialisation pour les fonctions
sym\'etriques monomiales}, Adv.\ Math. \textbf{162} (2001), 217--242.
\bibitem{La0}
M.\ Lassalle, \emph{Une formule de Pieri pour les polyn\^omes de Jack}, C. R. Acad. Sci. Paris, S\'er. I Math. \textbf{309} (1989), 941--944.
\bibitem{La1}
M.\ Lassalle, \emph{Jack polynomials and some identities for partitions}, Trans. Amer. Math. Soc. \textbf{356} (2004), 3455--3476.
\bibitem{La3}
M.\ Lassalle, \emph{An explicit formula for the characters of the symmetric group}, Math. Ann. \textbf{340} (2008), 383--405.
\bibitem{La4}
M.\ Lassalle, \emph{A positivity conjecture for Jack polynomials}, Math. Res. Lett. \textbf{15} (2008), 661--681.
\bibitem{La2} 
M.\ Lassalle, \emph{Two positivity conjectures for Kerov polynomials}, Adv.\ Appl.\ Math. \textbf{41} (2008), 407--422.
\bibitem{W}
M.\ Lassalle, available at http://igm.univ-mlv.fr/{\textasciitilde}lassalle/free.html
\bibitem{Ma}
I.\ G.\ Macdonald, \emph{Symmetric functions and Hall polynomials}, Clarendon Press, second edition, Oxford, 1995.
\bibitem{Ok1}
A.\ Okounkov, G.\ Olshanski, \emph{Shifted Jack polynomials, binomial formula and applications}, Math. Res. Lett. \textbf{4} (1997), 69--78.
\bibitem{Ok2}
A.\ Okounkov, G.\ Olshanski, \emph{Shifted Schur functions}, St. Petersburg Math. J. \textbf{9} (1998), 239--300.
\bibitem{Ok3}
A. Okounkov, \emph{(Shifted) Macdonald polynomials, $q$-integral representation and combinatorial formula}, Compos. Math. \textbf{112} (1998), 147--182.
\bibitem{R}
A.\ Rattan, \emph{Character polynomials and Lagrange inversion}, Thesis (2005), Waterloo University. 
\bibitem{Sn}
P.\ \.{S}niady, \emph{Asymptotics of characters of symmetric groups and free probability}, Discrete Math. \textbf{306} (2006), 624--665.
\bibitem{Sp} 
R.\ Speicher, \emph{Free probability theory and non-crossing partitions}, S\'em. Lothar. Combin. \textbf{39} (1997), article B39c.
\bibitem{S}
R.\ P.\ Stanley, \emph{Some combinatorial properties of Jack
symmetric functions}, Adv.\ Math. \textbf{77} (1989), 76--115.
\bibitem{V}
D.\ Voiculescu, \emph{Addition of certain non-commuting random variables}, J.\ Funct. Anal. \textbf{66} (1986), 323--346.

\end{thebibliography}
\end{document}